\numberwithin{equation}{section}
\newtheorem{theorem}{Theorem}[section]
\newtheorem{lemma}[theorem]{Lemma}
\newtheorem{proposition}[theorem]{Proposition}
\newtheorem{corollary}[theorem]{Corollary}
\newtheorem{assumption}[theorem]{Assumption}
\newtheorem{remark}[theorem]{Remark}
\def\e{\mathbb{E}}
\def\r{\mathbb{R}}
\def\p{\operatorname{prox}}
\newcommand{\argmin}{\operatorname{\arg\min}}
\title{\bf \Large A Single-Loop Stochastic Proximal Quasi-Newton Method for Large-Scale Nonsmooth Convex Optimization}
\author{\textbf{Yongcun Song}\thanks{\parbox[t]{16cm} {Department of Mathematics, City University of Hong Kong, Hong Kong, China. Email: ysong307@gmail.com} },
\textbf{Zimeng Wang}\thanks{\parbox[t]{16cm}{Department of Mathematics, The University of Hong Kong, Pok Fu Lam, Hong Kong SAR, China.
        Email: wzm2000@connect.hku.hk.}},
\textbf{Xiaoming Yuan}\thanks{\parbox[t]{16cm}{Department of Mathematics, The University of Hong Kong, Pok Fu Lam, Hong Kong SAR, China.Email: xmyuan@hku.hk. }},
\textbf{Hangrui Yue}\thanks{\parbox[t]{16cm}{School of Mathematical Sciences, Nankai University, Tianjin 300071, China. This author was supported by the National Natural Science Foundation of China (No. 12301399). Email:yuehangrui@gmail.com.}}
        }
\date{}
\begin{document}
\maketitle

\begin{abstract}
    We propose a new stochastic proximal quasi-Newton method for minimizing the sum of two convex functions in the particular context that one of the functions is the average of a large number of smooth functions and the other one is nonsmooth. The new method integrates a simple single-loop SVRG (L-SVRG) technique for sampling the gradient and a stochastic limited-memory BFGS (L-BFGS) scheme for approximating the Hessian of the smooth function components.
    The globally linear convergence rate of the new method is proved under mild assumptions. It is also shown that the new method covers a proximal variant of the L-SVRG as a special case, and it allows for various generalization through the integration with other variance reduction methods. For example, the L-SVRG can be replaced with the SAGA or SEGA in the proposed new method and thus other new stochastic proximal quasi-Newton methods with rigorously guaranteed convergence can be proposed accordingly. Moreover, we meticulously analyze the resulting nonsmooth subproblem at each iteration and utilize a compact representation of the L-BFGS matrix with the storage of some auxiliary matrices. As a result, we propose a very efficient and easily implementable semismooth Newton solver for solving the involved subproblems, whose arithmetic operation per iteration is merely order of $O(d)$, where $d$ denotes the dimensionality of the problem. With this efficient inner solver, the new method performs well and its numerical efficiency is validated through extensive experiments on a regularized logistic regression problem.

    \medskip

    \noindent {\bf Keywords.} {convex optimization, stochastic optimization, nonsmooth optimization, quasi-Newton, variance reduction, semismooth Newton, globally linear convergence}

    \medskip

    \noindent{\bf MSC codes.}  {
    65K05, 
    90C25, 
    90C53, 
    90C15, 
	90C06 
    }
\end{abstract}

\section{Introduction} \label{sec: intro}

We consider the following nonsmooth convex optimization problem:
\begin{equation} \label{eq: main prob}
	\min _{x \in \mathbb{R}^d} F(x) :=  \underbrace{\frac{1}{n} \sum_{i=1}^n f_i(x)}_{f(x)}+h(x),
\end{equation}
in which $f: \r^d \rightarrow \r$ is the average of $n$ function components and each component $f_i : \r^d \rightarrow \r$ is convex and differentiable (hence $f$), and the function $h: \r^d \rightarrow \r\cup\{+\infty\}$ is convex, proper, and lower semicontinuous, but possibly nonsmooth. Problem (\ref{eq: main prob}) with large $n$ is prevalent within the machine learning community, known as regularized empirical risk minimization (see, e.g., \cite{hastie2009elements}), where each component $f_i(x)$ represents the loss associated with the $i$-th data sample, while $h(x)$ is a manually incorporated regularization function aimed at improving model stability or promoting sparsity in the model parameters.
Typical machine learning models that fall into the frame of problem \eqref{eq: main prob} include and are not limited to LASSO \cite{tibshirani1996regression}, support vector machine \cite{cortes1995support}, and regularized logistic regression \cite{berkson1944application}.

\subsection{Deterministic methods}

For the generic $f(x)$, it is common to solve problem (\ref{eq: main prob}) via considering its approximation
\begin{equation} \label{eq: ista}
	x_{k+1}=\underset{x\in \r^d}{\argmin}~\left\{f\left(x_{k}\right)+ \nabla f\left(x_{k}\right)^\top( x-x_{k})+\frac{1}{2 \eta_k}\|x-x_k\|^2 + h(x)\right\},
\end{equation}
with $\eta_k>0$. That is, the smooth function $f(x)$ is approximated by a simpler quadratic function iteratively. With the notation of proximal operator $\p_{\eta,\;h}(\cdot)$ (see \eqref{eq: prox-eta} for the definition), (\ref{eq: ista}) can be equivalently written as
\begin{equation}\label{eq: proximal}
x_{k+1}=\p_{\eta_k,\;h}(x_{k}-\eta_k \nabla f(x_k)).
\end{equation}
The iterative scheme (\ref{eq: proximal}) is also called the proximal gradient (PG) method \cite{lions1979splitting, passty1979ergodic}. Depending on $h(x)$, (\ref{eq: proximal}) could be easy enough to have a closed-form solution, and a set of classic algorithms can be rendered from (\ref{eq: proximal}) with different specifications of $h(x)$. Examples include the projected gradient method \cite{levitin1966constrained} when $h(x)$ is the indicator function of a convex and closed set in $\mathbb{R}^d$, and the iterative shrinkage-thresholding algorithm (ISTA), as well as its faster version (FISTA) in \cite{beck2009fast} when $h(x)=\lambda\|x\|_1$ for some $\lambda>0$.
It is analyzed in, e.g., \cite{beck2009fast, nesterov2013gradient}, that the PG methods exhibit sublinear and linear convergence rates when the smooth function $f(x)$ is convex and strongly convex, respectively. Nevertheless, the approximated model (\ref{eq: ista}) only involves the gradient information of $f(x)$, while no information of the Hessian of $f(x)$ is considered. Hence, implementation of the resulting iterations is relatively easier, yet approximation to the original problem (\ref{eq: main prob}) is less accurate and thus the theoretical convergence rate of such an algorithm is of lower order.

To achieve higher-order convergence rates and further acceleration, it is necessary to incorporate the Hessian of $f$ or its approximation into the approximation of the original problem (\ref{eq: main prob}). Some proximal Newton-type methods were thus proposed in the literature, to mention a few, see \cite{becker2012quasi, becker2019quasi, byrd2016family, byrd2016inexact, lee2014proximal, mordukhovich2021generalized, mordukhovich2023globally, stella2017forward}. These proximal Newton-type methods have a common feature that the original problem (\ref{eq: main prob}) is approximated via
\begin{equation} \label{eq: prox_newton}
	x_{k+1}=\underset{x\in \r^d}{\argmin}~\left\{f\left(x_{k}\right)+\nabla f\left(x_{k}\right)^\top( x-x_{k})+\frac{1}{2 \eta_k}(x-x_k)^\top B_k (x-x_k) + h(x)\right\},
\end{equation}
where $\eta_k>0$ can be regarded as the step size and $B_k\in\r^{d\times d}$ is the Hessian matrix $\nabla^2f(x_k)$ or its approximation. In particular, if $B_k$ is set as $\nabla^2 f(x_k)$ or constructed by a quasi-Newton strategy, we obtain the proximal Newton or proximal quasi-Newton method \cite{lee2014proximal} accordingly. Note that the PG method (\ref{eq: proximal}) is recovered from \eqref{eq: prox_newton} when $B_k$ is the identity matrix. Of course, compared with (\ref{eq: ista}), solving (\ref{eq: prox_newton}) is generally more difficult due to the presence of the general $B_k$. Hence, the computation per iteration of such a resulting algorithm is more time-consuming. Nevertheless, the curvature information of $f$ can be well captured in (\ref{eq: prox_newton}) and the original problem (\ref{eq: main prob}) is approximated more accurately. Hence, the theoretical convergence rate of such an algorithm based on (\ref{eq: prox_newton}) is of higher order. Indeed, it has been shown in \cite{lee2014proximal} that, under the same assumptions as those for the PG, the proximal Newton and the proximal quasi-Newton methods with unit length step sizes can achieve locally quadratic and superlinear convergence, respectively. 

For proximal quasi-Newton methods, it is crucial to construct the Hessian approximation $B_k$ efficiently. In this regard, the BFGS \cite{fletcher2000practical} and the limited-memory BFGS (L-BFGS) \cite{liu1989limited} are widely used in the literature.
In the BFGS, the inverse Hessian approximation $H_k := B_k^{-1}$ is necessary to be stored to update $H_{k+1}$, which requires $O(d^2)$ memory.
By contrast, the L-BFGS only needs to store $m$ correction pairs
$$
\{(s_i, y_i)\}_{i=k-m+1}^{k}\; \hbox{with} \; s_i=x_{i+1}-x_i \;\hbox{and} \; y_i=\nabla f\left(x_{i+1}\right)-\nabla f\left(x_i\right)
$$
to construct $H_{k+1}$ and hence reduce the storage cost to $O(md)$. Consequently, the L-BFGS is preferred for practical implementations, particularly when $d$ is large.

These deterministic methods are efficient when the full gradient $\nabla f:=\frac{1}{n}\sum_{i=1}^n\nabla f_i$ is accessible. They find extensive applications across various fields, including barrier representation of feasible set \cite{nesterov1994interior}, LASSO \cite{tibshirani1996regression}, and many others.
However, in modern machine learning tasks, the number of components $n$ involved in \eqref{eq: main prob} is usually very large, which makes the full gradient $\nabla f$ very expensive to compute and thus deteriorates the numerical efficiency.

\subsection{Stochastic methods}
In the context of machine learning, a common idea for applying gradient-based methods on large-scale optimization problems is to replace a full gradient with a certain stochastic approximation, such as a single or a small batch of gradient components. This results in the class of stochastic algorithms, which can be traced back to the seminal work \cite{robbins1951stochastic}.
The predominant methodology within this class advocates the stochastic gradient descent (SGD) method \cite{robbins1951stochastic} and its variants \cite{duchi2011adaptive, kingma2014adam,loizou2020momentum,qian1999momentum,zeiler2012adadelta}, which have demonstrated significant success across a wide range of tasks in machine learning \cite{ bottou2007tradeoffs,bottou1991stochastic, lecun1998gradient}. The per-iteration cost of SGD and its variants is relatively low but their worst-case convergence rates are only sublinear even for strongly convex functions, as the step sizes are necessary to vanish to ensure convergence.
To address this issue, various approaches have been proposed in the literature, including variance reduction (VR) methods \cite{bottou2018optimization, defazio2014saga, gower2020variance, hanzely2018sega, johnson2013accelerating, roux2012stochastic, shalev2013stochastic, xiao2014proximal} and stochastic Newton-type methods \cite{byrd2016stochastic, guo2023overview, kasai2018riemannian, lucchi2015variance, moritz2016linearly,  schraudolph2007stochastic, zhao2017stochastic}.

Different from the SGD that uses one or more gradient components directly as the full gradient approximation, VR methods use them to update it, such that the variance of the gradient approximation vanishes as iterations progress, and hence often exhibit faster convergence both in theory and practice.
Typical VR methods include SAG \cite{roux2012stochastic}, SAGA \cite{defazio2014saga}, SDCA \cite{shalev2013stochastic}, SVRG \cite{johnson2013accelerating, xiao2014proximal}, SEGA \cite{hanzely2018sega} and so on. These VR methods admit constant step sizes and thus can achieve linear convergence rates for strongly convex objectives.
Among these VR methods, the SVRG stands out as a popular choice due to its low storage cost and satisfactory performance in various tasks \cite{johnson2013accelerating, li2020accelerated}.
The SVRG method is mainly featured by a double-loop structure: in the outer loop, a full gradient is computed at a reference point, and this full gradient is then used in the inner loop to modify the gradient approximations.

Stochastic Newton-type methods incorporate Hessian information and hence often exhibit faster convergence than stochastic first-order methods. Representative stochastic Newton-type methods include the online BFGS (oBFGS), the online L-BFGS (oL-BFGS) \cite{schraudolph2007stochastic}, the stochastic quasi-Newton (SQN) method \cite{byrd2016stochastic} and the stochastic L-BFGS method \cite{moritz2016linearly}.
The oBFGS and the oL-BFGS represent pioneering approaches that generalize the BFGS and the L-BFGS to stochastic settings.
The SQN method utilizes a modified L-BFGS scheme that applies a Hessian-vector product to update the Hessian approximation. While the worst-case convergence rate of SQN is sublinear, practical performance of SQN surpasses that of SGD significantly, indicating the substantial advantage offered by incorporating Hessian information. The stochastic L-BFGS \cite{moritz2016linearly} builds upon the SQN and the SVRG, and is the first stochastic quasi-Newton algorithm achieving globally linear convergence for strongly convex and smooth objectives. Some methods that combine SQN with the SVRG can be referred to \cite{kasai2018riemannian, lucchi2015variance, zhao2017stochastic}. 

For solving  \eqref{eq: main prob}, some stochastic PG methods have been studied in \cite{bertsekas2011incremental, duchi2009efficient, langford2009sparse}.
Additionally, some methods combining the VR techniques and stochastic Newton-type methods have been proposed in the literature. For instance, a general framework designed in \cite{yang2021stochastic} performs an additional stochastic proximal gradient step at each iteration after a Newton-type update and allows for integration with VR methods.
The subsampled Newton method with cubic regularization proposed in \cite{zhang2022adaptive} assumes access to the true Hessian of each component function, and aggregates the stochastic gradient and Hessian approximation using a subset of components according to the SVRG scheme.
An inexact subsampled proximal Newton method combined with the SVRG is developed in \cite{wang2019utilizing}, and its convergence results are established for a special case of \eqref{eq: main prob}, where each component $f_i$ is a loss of a linear predictor.
In \cite{luo2016proximal}, a stochastic proximal quasi-Newton method that combines the SQN method with the SVRG technique is introduced and analyzed. Specifically, at the $k$-th iteration of this method, the new iterate $x_{k+1}$ is generated via \eqref{eq: prox_newton} or its approximation, with $\nabla f(x_k)$ replaced by a stochastic gradient generated via the SVRG.

Note that the aforementioned stochastic methods that incorporate VR techniques simply adopt the SVRG scheme, see \cite{kasai2018riemannian, lucchi2015variance, luo2016proximal, moritz2016linearly, wang2019utilizing, zhang2022adaptive, zhao2017stochastic}.
Despite achieving fast theoretical convergence, these methods are limited by the inherent drawbacks of the SVRG resulting from its double-loop structure.
For instance, existing convergence results of the SVRG are established for the reference point, updating which can be computationally expensive since a full gradient computation is required.
Additionally, as pointed out in \cite{kovalev2020don}, the practical performance of the SVRG is heavily influenced by the number of inner iterations, but there lacks theoretical guidance on selecting its value optimally. Moreover, empirical observations have indicated that a simplified implementation of the SVRG often exhibits superior performance in practice but without convergence guarantee, resulting in a discrepancy between theory and practice (see Section \ref{se:construct vk} for the details).

\subsection{Our contributions}

Recall that existing stochastic quasi-Newton methods for solving the nonsmooth problem \eqref{eq: main prob} suffer from either slow sublinear convergence or practical issues arising from the double-loop structure of the SVRG.
To overcome these limitations, we combine a stochastic L-BFGS scheme \cite{byrd2016stochastic} based on the Hessian-vector product with the recently introduced loopless SVRG (L-SVRG) \cite{kovalev2020don} and hence propose a novel stochastic proximal quasi-Newton method for solving \eqref{eq: main prob}. Moreover, we provide rigorous analysis demonstrating its rapid globally linear convergence and present a highly efficient way for numerical implementation.

To adapt the L-BFGS for stochastic optimization, an intuitive way is to generate the correction vector $y$ by differencing stochastic gradients based on small samples. Unfortunately, as pointed out in \cite{byrd2016stochastic}, this approach may lead to a biased estimate of the Hessian approximation.
To address this issue, we follow \cite{byrd2016stochastic} and decouple the calculations of the stochastic gradients and Hessian approximations by utilizing Hessian-vector products, which also provide the flexibility for periodic updates of the Hessian approximations.
On the other hand, to overcome the limitations brought by the double-loop structure of the SVRG, the L-SVRG eliminates the inner loop by incorporating a probability-based update of the reference point at each iteration.
It was shown in \cite{kovalev2020don} that, compared with the SVRG, the L-SVRG achieves a linear convergence rate for strongly convex objectives without increasing storage cost while exhibiting a simplified structure.
Furthermore, empirical results demonstrate that the L-SVRG outperforms the SVRG in practical applications.

Combining the ideas of the stochastic L-BFGS and the L-SVRG, our proposed method proceeds by performing the following iterative scheme
\begin{equation} \label{eq: our_alg}
	x_{k+1}=\underset{x\in \r^d}{\argmin}~\left\{f\left(x_{k}\right)+ v_k^\top( x-x_{k})+\frac{1}{2 \eta_k}(x-x_k)^\top B_k (x-x_k) + h(x)\right\},
\end{equation}
where $v_k\in \r^d$ denotes the stochastic gradient obtained by the L-SVRG, and $B_k\in \r^{d\times d}$ is constructed via $m$ correction pairs generated by the approach outlined in  \cite{byrd2016stochastic}.
The proposed method surpasses those SVRG-based proximal quasi-Newton methods by featuring a unique single-loop structure to update $v_k$, which makes it easier and cheaper to implement.
In addition to the simple structure, our method demonstrates a rapid globally linear convergence rate under the same assumptions as those for the existing stochastic proximal quasi-Newton methods in the literature. To the best of our knowledge, our method seems to be the first stochastic proximal quasi-Newton method that incorporates a single-loop stochastic gradient updating scheme while preserving the desirable property of linear convergence.
Moreover, as a special case of our method, we obtain a proximal extension of the L-SVRG for addressing \eqref{eq: main prob}.
We also explore some generalizations of our method, where the stochastic gradients are generated by other VR methods like the SAGA and the SEGA. Notably, linear convergence results for these variants can be established by extending the mathematical paradigm utilized in analyzing the original method (see Section \ref{se:generalization} for more details).

Like other proximal quasi-Newton methods, the practical efficiency of our method heavily relies on the rapid solution of the subproblem \eqref{eq: our_alg}. If a first-order algorithm is employed for this purpose, one may see slow convergence and struggle in pursuing highly accurate solutions.
To enhance the practical applicability of our method, we analyze the nonsmooth subproblem \eqref{eq: our_alg} meticulously and propose a new inner solver for this subproblem that can obtain highly accurate solutions very fast. Specifically, we first transform the nonsmooth subproblem into an equivalent smooth dual formulation. Then, we propose a Semismooth Newton (SSN) method \cite{qi1993nonsmooth} along with a line search scheme to efficiently solve the dual problem by carefully exploring its specific mathematical structure.
To further reduce the computational cost, we develop an efficient numerical implementation for the proposed SSN method by utilizing a compact matrix representation of the L-BFGS matrix and introducing auxiliary matrices that can be updated in a highly efficient manner. Generally, such an implementation requires only $O(\iota md)$  multiplications and $O(\iota m^2d)$ additions, along with $O(\iota d)$ simple non-linear operations (e.g., projections). Here, $\iota>0$ (typically a single-digit number) denotes the number of SSN iterations.
It is noteworthy that our SSN method can effectively address the nonsmooth problem \eqref{eq: our_alg} with general $v_k$ and positive definite matrix $B_k$, which is commonly incorporated as an inner subproblem in various proximal Newton-type methods, such as the proximal quasi-Newton method \cite{becker2012quasi, becker2019quasi, byrd2016inexact, lee2014proximal}, the stochastic proximal Newton-type method integrated with the SVRG \cite{luo2016proximal, wang2019utilizing} and the proximal subsampled Newton method \cite{liu2017inexact}. Consequently, the proposed SSN solver can be readily integrated into these proximal Newton-type methods as a subroutine, leading to immediate enhancements in their numerical performance.

\subsection{Organization}
The rest of the paper is organized as follows. Section \ref{sec: pre} provides some notations and preliminary results that are utilized throughout this paper. Then the proposed algorithm is introduced in Section \ref{sec: design}, followed by the convergence analyses in Section \ref{sec: analysis}.  Section \ref{sec: ssn} presents an SSN approach for efficiently solving the subproblem \eqref{eq: our_alg}. Section \ref{sec: evaluation} is devoted to numerical experiments and Section \ref{sec: conclusion} concludes the paper.

\section{Preliminaries}
\label{sec: pre}
In this section, we summarize some notations and preliminary results that are used throughout the paper. First,
we denote by $I_d$ the $d\times d$ identity matrix and omit the subscript $d$ when it is clear from the context.
For a vector $v\in \r^d$, we denote by $\|v\|$ its Euclidean norm and by $\|v\|_B:=\sqrt{v^\top B v}$ the $B$-norm of $v$, with $B\in \r^{d\times d}$ being a positive definite matrix.
For symmetric matrices $A, B\in\r^{d\times d}$, we write $A \preceq B$ if the matrix $B-A$ is positive semidefinite.

For a set $\mathcal{S}\subset [n]:=\{1, 2, \dots, n\}$ with cardinality $|\mathcal{S}|$ and functions $\{f_i\}_{i=1}^n$, we define
\begin{equation*}
	f_{\mathcal{S}}(x)=\frac{1}{|\mathcal{S}|}\sum_{i \in \mathcal{S}} f_i(x),~\forall x\in\mathbb{R}^d.
\end{equation*}
If each $f_i$ is smooth enough, we define $\nabla f_{\mathcal{S}}(x)$ and $\nabla^2 f_{\mathcal{S}}(x)$ in the same way.


A function $f: \r^d \rightarrow \r$ is said to be $L$-smooth if it is continuously differentiable and its gradient $\nabla f$ is $L$-Lipschitz continuous.
If $f$ is $L$-smooth and convex, then we have
\begin{equation}\label{eq: cvx-L}
	f(y) \ge f(x) + \nabla f(x)^\top(y-x) +\frac{1}{2L}\|\nabla f(y)-\nabla f(x)\|^2,~\forall x,y \in \mathbb{R}^d.
\end{equation}
One can refer to \cite{nesterov2003introductory} for a proof of \eqref{eq: cvx-L}. 

Let $F^* := \min_{x\in\r^d} F(x)$. Utilizing \eqref{eq: cvx-L}, we have the following result that is widely used in the convergence analysis of proximal-type algorithms.
\begin{lemma}[{\cite[Lemma 1]{xiao2014proximal}}] \label{lem: pre_bv}
	Consider $F(x)$ as defined in \eqref{eq: main prob}.
	Suppose for each $i\in [n]$, $f_i$ is convex and $L$-smooth. Let $x^*\in \argmin_{x\in\r^d} F(x)$, then we have
	\begin{equation*}
		\frac{1}{n}\sum_{i=1}^{n}\|\nabla f_{i}(x)-\nabla f_{i}(x^{*})\|^{2}\leq2L(F(x)-F^{*}).
	\end{equation*}
\end{lemma}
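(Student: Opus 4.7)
The plan is to prove the bound by separately controlling $\|\nabla f_i(x)-\nabla f_i(x^*)\|^2$ for each $i$ through a ``shifted'' convex and $L$-smooth function, and then bringing the $h$-part into play via the first-order optimality of $x^*$ for $F$.

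First, for each $i\in[n]$ I would introduce the auxiliary function
\[
\phi_i(x) := f_i(x) - f_i(x^*) - \nabla f_i(x^*)^\top (x-x^*).
\]
Since $f_i$ is convex and $L$-smooth, so is $\phi_i$, and moreover $\nabla \phi_i(x^*)=0$, hence $x^*$ is a global minimizer of $\phi_i$ with $\phi_i(x^*)=0$. I would then apply the inequality \eqref{eq: cvx-L} to $\phi_i$ with the pair $(x,x^*)$, using $\nabla\phi_i(x^*)=0$ and $\phi_i(x^*)=0$, to obtain
\[
0 \ge \phi_i(x) - \nabla\phi_i(x)^\top(x-x^*) + \frac{1}{2L}\|\nabla\phi_i(x)\|^2.
\]
Reorganizing, and more cleanly, I would equivalently apply \eqref{eq: cvx-L} to $\phi_i$ with the pair $(x^*,x)$: since $\phi_i(x^*)=0$ and $\nabla\phi_i(x^*)=0$, this collapses to $\|\nabla\phi_i(x)\|^2 \le 2L\,\phi_i(x)$, i.e.
\[
\|\nabla f_i(x)-\nabla f_i(x^*)\|^2 \le 2L\bigl[f_i(x)-f_i(x^*)-\nabla f_i(x^*)^\top(x-x^*)\bigr].
\]

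Next, I would average over $i\in[n]$. Since $\nabla f(x^*) = \frac{1}{n}\sum_i \nabla f_i(x^*)$ and analogously for $f$, this yields
\[
\frac{1}{n}\sum_{i=1}^n \|\nabla f_i(x)-\nabla f_i(x^*)\|^2 \le 2L\bigl[f(x)-f(x^*)-\nabla f(x^*)^\top(x-x^*)\bigr].
\]

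Finally, I would absorb the linear term into $h$ via the optimality condition of $x^*$. Since $x^*$ minimizes $F=f+h$, the first-order condition gives $-\nabla f(x^*) \in \partial h(x^*)$, so by convexity of $h$,
\[
h(x) \ge h(x^*) + \bigl(-\nabla f(x^*)\bigr)^\top(x-x^*),
\]
which is exactly $-\nabla f(x^*)^\top(x-x^*) \le h(x)-h(x^*)$. Substituting this into the previous display and recognizing $F(x)-F^* = [f(x)-f(x^*)] + [h(x)-h(x^*)]$ completes the proof.

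The routine ingredients are the descent-type inequality \eqref{eq: cvx-L} and the subgradient characterization of optimality; the only real step that requires a small trick is the construction of the shifted functions $\phi_i$, which isolates the co-coercivity of $\nabla f_i$ at $x^*$ from the $L$-smooth one-point inequality. The main obstacle to watch is ensuring that it is $\nabla f(x^*)$ (and not $\nabla f_i(x^*)$ separately) which appears in the final linear term, so that $-\nabla f(x^*)\in\partial h(x^*)$ can be invoked; this is why averaging must precede the use of the optimality condition on $h$.
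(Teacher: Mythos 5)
Your proof is correct and coincides with the standard argument for this result (the paper itself gives no proof, only the citation to Lemma 1 of \cite{xiao2014proximal}, whose proof is exactly this: the one-point co-coercivity bound for each $f_i$ at $x^*$, averaging, and then absorbing the linear term via $-\nabla f(x^*)\in\partial h(x^*)$). One minor simplification: the auxiliary functions $\phi_i$ are not needed, since applying \eqref{eq: cvx-L} to $f_i$ directly with the pair $(x^*,x)$ already yields $\|\nabla f_i(x)-\nabla f_i(x^*)\|^2 \le 2L\bigl[f_i(x)-f_i(x^*)-\nabla f_i(x^*)^\top(x-x^*)\bigr]$.
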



For a proper, lower-semicontinuous, and convex function $h: \r^d \rightarrow \r\cup\{+\infty\}$, its proximal operator with parameter $\eta > 0$ is defined as
\begin{equation}\label{eq: prox-eta}
	\p_{\eta,\;h}(x)=\underset{z\in \r^d}{\argmin}~\left\{h(z)+\frac{1}{2\eta}\|z-x\|^2\right\},~\forall x\in \r^d,
\end{equation}
and the Moreau envelope of $h$ with parameter $\eta > 0$ is given by
\begin{equation*}
	\mathcal{M}_{\eta,\;h}(x)= \min_{z\in \r^d}~h(z) +\frac{1}{2\eta}\|z-x\|^2, ~\forall x\in \r^d.
\end{equation*}
It has been shown in \cite{rockafellar1997convex} that $\mathcal{M}_{\eta,\;h}$ is differentiable everywhere and its gradient is given by
\begin{equation} \label{eq: diff_M}
	\nabla \mathcal{M}_{\eta,\;h}(x)=\frac{1}{\eta}\left(x-\p_{\eta,\;h}(x)\right),~\forall x\in \r^d.
\end{equation}
For simplicity, we denote $\p_h(x):=\p_{1,\;h}(x)$ and $\mathcal{M}_h(x):=\mathcal{M}_{1,\;h}(x)$.

Let $B\in \r^{d\times d}$ be a positive definite matrix, we define the scaled proximal operator of $h$ with parameter $\eta>0$ as
\begin{equation*}
	\p_{\eta,\;h}^B(x)=\underset{z\in \r^d}{\argmin}~ \left\{h(z)+\frac{1}{2 \eta}\|z-x\|_B^2\right\},~\forall x\in \r^d.
\end{equation*}
For any $\eta>0$, the scaled proximal operator $\p_{\eta,\;h}^B(\cdot)$ is nonexpansive in the $B$-norm, that is,
\begin{equation}\label{eq:nonexpansion}
	\left\|\p_{\eta,\;h}^B(x) - \p_{\eta,\;h}^B(y)\right\|_B \leq \left\|x-y\right\|_B, ~\forall x, y\in \r^d.
\end{equation}

Next, we introduce an important lemma that represents an extension of \cite[Lemma 3]{xiao2014proximal} by incorporating additional Hessian information. A similar result was mentioned in \cite{luo2016proximal} but without proof. For completeness, we present a proof of the lemma.
\begin{lemma} \label{lem: key lem}
	Let $F(x)$ be defined in \eqref{eq: main prob} and suppose that $f$ is $\mu$-strongly convex and $L$-smooth.
	For any $\eta>0$, $x, v\in\r^d$ and positive definite matrix $B\in \r^{d\times d}$, let $x^{+}:=\p_{\eta,\;h}^{B}(x-\eta B^{-1}v)$.
	Define $g:=\frac{1}{\eta}(x-x^{+})$ and $ \Delta:=v-\nabla f(x)$,
	then for any $y\in \r^d$, it holds that
	\begin{equation*}
		F(y) \ge F(x^+)+g^\top B(y-x)+\Delta^\top(x^+-y)+\eta\|g\|_B^2-\frac{L\eta^2}{2}\|g\|^2+\frac{\mu}{2}\|y-x\|^2.
	\end{equation*}
\end{lemma}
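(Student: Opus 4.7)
The plan is to combine three standard ingredients: the optimality condition for the scaled proximal step that defines $x^+$, the subgradient inequality for the convex nonsmooth part $h$, and the strong-convexity/smoothness inequalities for $f$. The only mildly delicate bookkeeping is to track how $v$ and $\nabla f(x)$ are exchanged via $\Delta$, and to decompose $y - x^+$ as $(y-x) + \eta g$ so that the $\eta\|g\|_B^2$ term appears.

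First I would write the first-order optimality condition for $x^+ = \p_{\eta,h}^B(x - \eta B^{-1}v)$. This yields an explicit element of $\partial h(x^+)$, namely
\begin{equation*}
\xi := B g - v \in \partial h(x^+),
\end{equation*}
using the definition $g = \tfrac{1}{\eta}(x - x^+)$. By convexity of $h$, this gives the subgradient inequality $h(y) \ge h(x^+) + (Bg - v)^\top (y - x^+)$.

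Next, I would combine the $\mu$-strong convexity of $f$ at $x$, evaluated at $y$, with the descent inequality coming from $L$-smoothness, evaluated at $x^+$. The strong-convexity bound gives a lower bound on $f(y)$ involving $\nabla f(x)^\top(y-x) + \tfrac{\mu}{2}\|y-x\|^2$, while the descent lemma lets me replace $f(x)$ by $f(x^+) - \nabla f(x)^\top(x^+-x) - \tfrac{L}{2}\|x^+-x\|^2$. Using $\|x^+-x\|^2 = \eta^2\|g\|^2$, adding the subgradient inequality for $h$, and collecting gradient terms into $\nabla f(x)^\top(y-x^+)$, I obtain
\begin{equation*}
F(y) \ge F(x^+) + \nabla f(x)^\top(y-x^+) + (Bg-v)^\top(y-x^+) - \tfrac{L\eta^2}{2}\|g\|^2 + \tfrac{\mu}{2}\|y-x\|^2.
\end{equation*}

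Finally, I would substitute $\nabla f(x) = v - \Delta$ into the linear term $\nabla f(x)^\top(y-x^+) + (Bg-v)^\top(y-x^+)$, which cleanly simplifies to $g^\top B(y-x^+) + \Delta^\top(x^+-y)$. The last algebraic step is the identity $y - x^+ = (y-x) + \eta g$, which splits $g^\top B(y-x^+) = g^\top B(y-x) + \eta\|g\|_B^2$ and produces the stated inequality. There is no genuinely hard step here; the main obstacle is simply being careful with signs and with the swap between $v$ and $\nabla f(x)$ induced by $\Delta$, since all of $x$, $x^+$, and $y$ appear in the linear terms and it is easy to misroute a term.
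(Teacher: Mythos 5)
Your proposal is correct and follows essentially the same route as the paper's proof: the optimality condition yielding $\xi = Bg - v \in \partial h(x^+)$, the combination of strong convexity of $f$ at $x$ with the descent lemma at $x^+$, the subgradient inequality for $h$, and the final splitting via $y - x^+ = (y-x) + \eta g$ all match the paper's argument step for step. No gaps.
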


\begin{proof}
	Let $H=B^{-1}$. It follows from the definition of $x^+$ that
	\begin{equation}\label{s1}
		x^{+}=\underset{z\in \r^d}{\argmin}~\left\{h(z)+\frac{1}{2\eta}\left\|z-(x-\eta Hv)\right\|_{B}^{2}\right\}.
	\end{equation}
	We denote by $\partial h(x)$ the set of all subgradients of $h$ at $x\in \r^d$, i.e., $\partial h(x) := \{\gamma \in \r^d | h(z) - h(x) - \gamma^\top (z-x) \geq 0, \forall z\in \r^d\}$.
	Then it is easy to show that the first-order optimality condition of (\ref{s1}) reads
	\begin{equation*}
		\exists\;\xi\in\partial h(x^{+}), s.t. B(x^{+}-	x+\eta H v)+\eta \xi=0,
	\end{equation*}
	which together with $g:=\frac{1}{\eta}(x-x^{+})$ implies that
	\begin{equation*}
		\xi = B g - v.
	\end{equation*}
	Since $f$ is $\mu$-strongly convex and $h$ is convex, we have
	\begin{equation} \label{eq: Ffh}
		F(y)=f(y)+h(y)\geq f(x) + \nabla f(x)^\top(y-x)+\frac{\mu}{2}\|y-x\|^2+ h(x^+)+\xi^\top (y-x^+).
	\end{equation}
	Since $f$ is $L$-smooth, we have that
	\begin{equation} \label{eq: lsmoothf}
		f(x) \ge f(x^+)-\nabla f(x)^\top(x^{+}-x)-\frac{L}{2}\|x^+-x\|^2.
	\end{equation}
	Applying (\ref{eq: lsmoothf}) on \eqref{eq: Ffh} leads to
	\begin{equation*}
		\begin{aligned}
			& F(y)\\
			\geq &f(x^+)-\nabla f(x)^\top(x^{+}-x)-\frac{L}{2}\|x^+-x\|^2+\nabla f(x)^\top(y-x)+\frac{\mu}{2}\|y-x\|^2+ h(x^+)+\xi^\top (y-x^+)\\=&F(x^+)+\nabla f(x)^\top(y-x^+)+(Bg-v)^\top(y-x^+)+\frac{\mu}{2}\|y-x\|^2 -\frac{L}{2}\|x^+-x\|^2\\=&F(x^+)+(Bg)^\top (y-x+x-x^+)+\Delta^\top(x^+-y)-\frac{L\eta^2}{2}\|g\|^2+\frac{\mu}{2}\|y-x\|^2\\=&F(x^+)+g^\top B(y-x)+\Delta^\top(x^+-y)+\eta\|g\|_B^2-\frac{L\eta^2}{2}\|g\|^2+\frac{\mu}{2}\|y-x\|^2,
		\end{aligned}
	\end{equation*}
	which completes the proof.
\end{proof}

\section{Algorithm} \label{sec: design}
In this section, we present the proposed single-loop stochastic proximal quasi-Newton method for solving problem \eqref{eq: main prob}. Typically, the iterative scheme of stochastic proximal Newton-type methods for solving \eqref{eq: main prob} can be uniformly written as
\begin{equation}\label{eq:subproblem}
	\begin{aligned}
		x_{k+1} &= \underset{x\in \r^d}{\argmin}~\left\{v_k ^\top\left(x-x_k\right)+\frac{1}{2 \eta_k}\left\|x-x_k\right\|_{B_k}^2+h(x)\right\}\\
		& = \p_{\eta_k,\;h}^{B_k}(x_{k}-\eta_k H_k v_k),
	\end{aligned}
\end{equation}
where $v_k\in \r^d$ represents a stochastic gradient, $B_k\in \r^{d\times d}$ serves as an approximation of the Hessian matrix $\nabla^2 f(x_k)$, and $H_k:=B_k^{-1}\in \r^{d\times d}$.
It is easy to see that the effectiveness of (\ref{eq:subproblem}) hinges on well-designed $v_k$ and  $B_k$.
In the rest part of this section, we shall elaborate on the construction of $v_k$ and $B_k$. In particular, we advocate combing the L-SVRG \cite{kovalev2020don} with the stochastic L-BFGS based on the Hessian-vector product \cite{byrd2016stochastic} to compute $v_k$ and $B_k$, and thus propose an efficient and easily implementable stochastic proximal quasi-Newton method for solving \eqref{eq: main prob}.

\subsection{Construction of \texorpdfstring{$v_k$}{}} \label{se:construct vk}
In general, the stochastic gradient $v_k$ serves as an unbiased estimator of $\nabla f(x_k)$ (see, e.g., \cite{defazio2014saga, duchi2011adaptive, johnson2013accelerating, kovalev2020don, nemirovski2009robust, robbins1951stochastic}).
In the design of SQN \cite{byrd2016stochastic}, $v_k$ is set as $\nabla f_{\mathcal{S}_k}(x_{k})$, where $\mathcal{S}_k\subset [n]$ is randomly selected.
However, approximating $\nabla f(x_k)$ in this way necessitates the use of a sequence of decreasing step sizes to guarantee the convergence.
This requirement leads to a sublinear convergence rate for SQN, even if $h(x)=0$ and $f$ is strongly convex.
To accelerate the convergence, a widely adopted strategy is to utilize VR techniques, with the SVRG method  \cite{johnson2013accelerating} being a popular and effective choice.
The SVRG method uses reference points and has a double-loop structure. To be concrete, at the $s$-th outer loop, given a reference point $\tilde{w}^s$, we set the initial iterate $x_1^s$ of the inner loop to be $\tilde{w}^s$.
Then, at the $k$-th inner loop, the new iterate $x_{k+1}^s$ is generated by $x_{k+1}^s=x_k^s-\eta v_k^s$, where $\eta>0$ is a predefined step size and $v_k^s$ is defined as
\begin{equation} \label{eq: svrg_gradient}
	v_{k}^s=\nabla f_{i_k^s}(x_{k}^s)-\nabla f_{i_k^s}(\tilde{w}^s)+\nabla f(\tilde{w}^s),
\end{equation}
where $i_k^s\in [n]$ is randomly selected at each iteration.
Upon completion of the inner loop, the new reference point $\tilde{w}^{s+1}$ is updated using the sequence $\{x_k^s\}_{k=1}^{l_s+1}$, where $l_s>0$ denotes the number of inner iterations for the $s$-th outer loop.
There are two practical options for updating $\tilde{w}^{s+1}$.
The first one is to simply set
\begin{equation}\label{update_1}
\tilde{w}^{s+1}=x_{l_s+1}^s.
\end{equation}
Alternatively, one can update $\tilde{w}^{s+1}$ as the average of the sequence $\{x_k^s\}_{k=1}^{l_s}$, i.e.,
\begin{equation}\label{update_2}
	\tilde{w}^{s+1} = \frac{1}{l_s} \sum_{i=1}^{l_s} x_i^s.
\end{equation}

It has been shown in \cite{johnson2013accelerating, xiao2014proximal} that, with the update scheme (\ref{update_2}), the SVRG and its proximal variant converge linearly. In practice,  the update (\ref{update_1}) is often preferred due to its superior empirical performance and ease of implementation.  However, convergence guarantee for the SVRG with (\ref{update_1}) is still lacking in the literature, which creates a discrepancy between mathematical theory and practice for the SVRG method.
The double-loop structure of the SVRG method also introduces additional practical challenges. For instance, as mentioned in \cite{kovalev2020don}, the SVRG is sensitive to the number of inner iterations, for which rigorous theoretical guidance has not yet been established. Additionally, the SVRG requires the reference point $\tilde{w}^s$ to be updated at each outer loop. For a new reference point $\tilde{w}^s$, a full gradient $\nabla f(\tilde{w}^s)$ is required to update $v_k$ in (\ref{eq: svrg_gradient}), which may be computationally expensive.

To overcome the aforementioned limitations of the SVRG, we advocate the L-SVRG scheme \cite{kovalev2020don}. Compared with the SVRG, the L-SVRG eliminates the inner loops, resulting in a more efficient and simplified framework for designing $v_k$. Precisely,  the L-SVRG scheme computes $v_k$ as follows:
\begin{equation} \label{eq: vr gradient}
	v_{k}=\nabla f_{i_k}(x_{k})-\nabla f_{i_k}(w_{k})+\nabla f(w_{k}),
\end{equation}
where $i_k\in [n]$ is a randomly selected index and $w_k\in \r^d$ is a reference point. Different from the SVRG, we update the new reference point $w_{k+1}$ as $x_k$ with a small probability $p\in (0, 1)$, while keeping it unchanged with a probability of $1-p$, i.e.,
\begin{equation}\label{eq:update_w}
	w_{k+1}=\begin{cases}x_k&\text{with probability}\; p,\\w_k&\text{with probability}\; 1-p.\end{cases}
\end{equation}
In practice, it is very common to choose a relatively small value for $p$ (e.g., $p = O(1/n)$) to ensure that $w_k$ remains unchanged for most of the iterations, resulting in substantial savings in computational cost.
Moreover, similar to the SVRG, the L-SVRG also achieves a linear convergence rate as demonstrated in \cite{kovalev2020don}.
Therefore, the L-SVRG provides a more practical and efficient alternative to the SVRG, offering both simple implementation and desirable linear convergence.

\begin{remark}
Notice that it is natural to extend the stochastic gradient $v_k$ to the batch version, i,e., replace $v_k$ defined in \eqref{eq: vr gradient} with
\begin{equation}\label{s5}
	v_{k}=\nabla f_{\mathcal{B}_k}(x_{k})-\nabla f_{\mathcal{B}_k}(w_{k})+\nabla f(w_{k}),
\end{equation}
where $\mathcal{B}_k\subset [n]$ is independently selected at each iteration.
\end{remark}

\subsection{Construction of \texorpdfstring{$B_k$}{}}\label{se:construct Hk}

We employ the stochastic L-BFGS scheme, utilizing a set of correction pairs $\{\left(s_j, y_j\right)\}_{j=1}^m$ with $s_j^\top y_j>0$, to construct $B_k$. The correction pairs are generated using the Hessian-vector product scheme described in \cite{byrd2016stochastic}.  Specifically, new correction pairs are computed every $r$ iterations, and at most a specified number (denoted by $l$) of the latest computed correction pairs are stored.  We introduce a separate superscript $t$ to denote the number of correction pairs computed so far, where $t$ is the integer division of $k$ by $r$. Denoting the new correction pair as $(s^t, y^t)$, it is generated based on a collection of average iterates, i.e.,
\begin{equation}\label{eq: hess-vec}
s^t=\bar{x}^t-\bar{x}^{t-1}~\text{and}~y^t=\nabla^2 f_{\mathcal{S}^t}\left(\bar{x}^t\right) s^t,
\end{equation}
where $\bar{x}^t:=\frac{1}{r}\sum_{j=k-r+1}^{k} x_j$ and $\mathcal{S}^t$ is randomly sampled from $[n]$.
It is noteworthy that the matrix $\nabla^2 f_{\mathcal{S}^t}\left(\bar{x}^t\right)$ does not need to be constructed explicitly, one can directly compute the Hessian-vector product $\nabla^2 f_{\mathcal{S}^t}\left(\bar{x}^t\right) s^t$, which can save lots of computational and storage costs. Taking
 $\{\left(s_j, y_j\right)\}_{j=1}^m (m\leq l)$ as the latest computed $m$ correction pairs, i.e., $s_j := s^{t-m+j}$ and $y_j := y^{t-m+j}$ for $j\in [m]$, the compact matrix representation of the L-BFGS matrix $B^t$ (see \cite{nocedal1999numerical}) reads as :
\begin{equation} \label{eq: compact_lbfgs}
	B^t:=\sigma_{0}I - \begin{bmatrix}\sigma_{0} S & Y\end{bmatrix}\begin{bmatrix} \sigma_{0}  S^\top S  & L\\ L^\top& -D\end{bmatrix}^{-1}\begin{bmatrix}\sigma_0 S^\top \\ Y^\top \end{bmatrix},~\sigma_0:=\frac{(y_m)^\top y_m}{(y_m)^\top s_m},
\end{equation}
where
\begin{equation*}
	S:=[s_{1}, \dots, s_{m}]\in \r^{d\times m},\;  Y:=[y_{1}, \dots, y_{m}]\in \r^{d\times m},\;D:=\text{diag}[s^\top_{1} y_{1} \dots s^\top_{m} y_{m}] \in \r^{m\times m},
\end{equation*}
and $L\in \mathbb{R}^{m \times m}$ is defined as
\begin{equation*}
	(L)_{i,j}=\left\{
	\begin{aligned}
		&s_{i}^\top y_{j} & \text{~if~} i> j,\\
		&0 & \text{~otherwise.}
	\end{aligned}
	\right.
\end{equation*}
At the initial stage (i.e., $k < r$), we set $B_k = I$. For $ k \geq r$, we set $B_k = B^t$ and update it every $r$ iterations.  In addition, the matrix $B^t$ does not need to be explicitly computed, as we will discuss in detail in Section \ref{sec: ssn}.

It is worth mentioning an alternative stochastic L-BFGS scheme in \cite{berahas2016multi}, which generates a correction pair $(s_k, y_k)$ at every iteration. To be concrete, a small batch $S_k \subset [n]$ is generated at the $k$-th iteration such that $O_k:=S_k \cap S_{k-1} \neq \emptyset$.
Then the correction pair $(s_k, y_k)$ is computed via
\begin{equation*}
	s_{k}=x_{k}-x_{k-1}, \quad
	y_{k}=\nabla f_{O_k}(x_{k})-\nabla f_{O_k}(x_{k-1}).
\end{equation*}
Note that the construction of $y_{k}$ in the above L-BFGS scheme does not require extra gradient computation, as $O_k$ is a subset of both $S_k$ and $S_{k-1}$.
However, this approach may encounter practical challenges, such as the inability to ensure the independence of the samples $\left\{S_k\right\}$ or the insufficiency of the overlap set $O_k$ to offer valuable Hessian information. In contrast, the stochastic L-BFGS Hessian-vector product scheme \eqref{eq: hess-vec}, as highlighted in \cite{byrd2016stochastic}, addresses this limitation and achieves a stable Hessian approximation by decoupling the computation of $s$ and the construction of $y$.
It also allows for the incorporation of new Hessian information periodically, and this flexibility enhances the performance and efficiency of the Hessian-vector product scheme \eqref{eq: hess-vec}.

\subsection{A single-loop stochastic proximal quasi-Newton method for (\ref{eq: main prob})}
Based on the discussions in Sections \ref{se:construct vk} and \ref{se:construct Hk}, we propose a single-loop stochastic proximal quasi-Newton method for solving \eqref{eq: main prob}, as summarized in Algorithm \ref{algo: spbfgs}.

\begin{algorithm}[htpb]
	\caption{A single-loop stochastic proximal quasi-Newton method for \eqref{eq: main prob}}
	\label{algo: spbfgs} 	
	\begin{algorithmic}
		\REQUIRE initial points $x_0 = w_0\in \r^d$; step sizes $\{\eta_k\}_{k\ge 0}$; probability parameter $p\in (0, 1)$; Hessian update frequency $r>0$.
		\STATE Set $t=0, \bar{x}_{0} = 0$.
		\FOR{$k = 0, 1, 2, \cdots$}
		\IF{$\mathrm{mod}(k,r)=0$ and $k\ge r$}
		\STATE Set $t = t + 1$ and $\bar{x}^{t}=\frac{1}{r}\sum_{j=k-r+1}^{k}x_{j}$.
        \STATE Compute $s^{t}=\bar{x}^{t}-\bar{x}^{t-1}$.
		\STATE Sample $\mathcal{S}^{t}\subset[n]$ and compute $y^{t}=\nabla^2 f_{\mathcal{S}^t}(\bar{x}^{t})s^{t}$.
		\STATE Update  the correction pairs $\{\left(s_j, y_j\right)\}_{j=1}^m$ with $(s^t, y^t)$.
		\ENDIF
		\STATE Sample an index $i_k$ uniformly at random from $[n]$.
		\STATE Compute $v_{k}$ according to \eqref{eq: vr gradient} or \eqref{s5}.
		\IF{$k < r$}
		\STATE $x_{k+1} = \p_{\eta_k,\;h}(x_{k}-\eta_k v_{k})$.
		 \ELSE
		\STATE Update the iterate $x_{k+1}$ by solving \eqref{eq:subproblem} with $B_k:=B^t$ as defined by \eqref{eq: compact_lbfgs}.
		\ENDIF
		\STATE Update the reference point $w_{k+1}=\begin{cases}x_k&\text{with probability}\; p,\\w_k&\text{with probability}\; 1-p.\end{cases}$
		\ENDFOR
	\end{algorithmic}
\end{algorithm}

\begin{remark}
	It is easy to see that if the Hessian approximation $B_k$ is fixed as the identity matrix $I_d$, Algorithm \ref{algo: spbfgs} simplifies into a proximal variant of the L-SVRG, which can be considered as an extension of the L-SVRG for solving the nonsmooth problem \eqref{eq: main prob}. We present the proximal L-SVRG algorithm in Algorithm \ref{algo: plsvrg}.
	\begin{algorithm}[htpb]
		\caption{Proximal Loopless SVRG for solving \eqref{eq: main prob}}
		\label{algo: plsvrg}
		\begin{algorithmic}
		\REQUIRE initial points $x_0 = w_0 \in \r^d$; step sizes $\{\eta_k\}_{k\ge 0}$; probability parameter $p\in (0, 1)$.
			\FOR{$k = 0, 1, \dots$}
			\STATE Sample an index $i_k$ uniformly at random from $[n]$.
			\STATE Compute $v_{k}$ according to \eqref{eq: vr gradient} or \eqref{s5}.
			\STATE Update $x_{k+1} = \p_{\eta_k,\;h}(x_{k}-\eta_k v_{k})$ and $w_{k+1}=\begin{cases}x_k&\text{with probability}\; p,\\w_k&\text{with probability}\; 1-p.\end{cases}$
			\ENDFOR
		\end{algorithmic}
	\end{algorithm}
\end{remark}

\begin{remark}\label{re: generalization}
	Note that Algorithm \ref{algo: spbfgs} can be generalized by generating the stochastic gradient $v_k$ via other VR methods, such as the SAGA \cite{defazio2014saga} and the SEGA \cite{hanzely2018sega}. As to be shown in Section \ref{se:generalization}, the resulting generalized algorithms exhibit linear convergence rates.
\end{remark}

\begin{remark}
Like other proximal Newton-type methods, Algorithm \ref{algo: spbfgs} requires to solve the subproblem \eqref{eq:subproblem} to proceed, which generally lacks a closed-form solution.
To tackle this challenge, we introduce a rapid inner solver in Section \ref{sec: ssn} that efficiently solves \eqref{eq:subproblem}, thereby significantly reducing the overall computational cost. It's worth noting that this solver can be utilized for any proximal Newton-type methods that require solving subproblems akin to \eqref{eq:subproblem}, consequently improving their numerical efficiency as well.
\end{remark}

\section{Convergence analysis} \label{sec: analysis}
We present our convergence analyses in this section.
Specifically, in Section \ref{se:con_alg1} we focus on Algorithm \ref{algo: spbfgs} and demonstrate its globally linear convergence. The convergence of Algorithm \ref{algo: plsvrg} is established in Section \ref{se:con_plsvrg}.
Finally, in Section \ref{se:generalization} we generalize Algorithm \ref{algo: spbfgs} and provide corresponding convergence results. Note that while the results established in this section focus on the case \eqref{eq: vr gradient}, they can be readily extended to the batch version \eqref{s5}.

\subsection{Convergence of Algorithm \ref{algo: spbfgs}}\label{se:con_alg1}
In this subsection, we shall show that the iterative sequence generated by Algorithm \ref{algo: spbfgs} converges linearly towards the optimal solution in expectation.
We start by making the following assumption, which is widely adopted in the related literature.
\begin{assumption}
	\label{ass: fh}
     Each component $f_i$ is twice continuously differentiable, and there exist constants $\mu, L>0$ such that
\begin{equation*}
	\mu I \preceq \nabla^2 f_{\mathcal{S}}(x) \preceq L I
\end{equation*}
for any $x \in \r^d$ and $\mathcal{S} \subset [n]$.
\end{assumption}
Note that Assumption \ref{ass: fh} implies that each $f_i$ and $f$ are $\mu$-strongly convex and $L$-smooth, and the objective $F$ is $\mu$-strongly convex, thereby ensuring the existence of a unique minimizer $x^*$ of \eqref{eq: main prob}.

Under Assumption \ref{ass: fh}, it can be established that the eigenvalues of the Hessian approximations defined by \eqref{eq: compact_lbfgs} are uniformly bounded both from above and away from zero as summarized in the following lemma.
\begin{lemma}[{\cite[Lemma 3.1]{byrd2016stochastic}}] \label{lemma: bounded hessian}
	Let Assumption \ref{ass: fh} hold, then there exist constants $0 < m_1 \leq m_2$ such that the Hessian approximations $\{B^t\}_{t\ge 1}$ defined by \eqref{eq: compact_lbfgs} satisfies
	\begin{equation*}
		m_1 I \preceq B^{t} \preceq m_2 I, ~\forall t\geq 1.
	\end{equation*}
\end{lemma}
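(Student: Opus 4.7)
The plan is to view $B^t$ not through its compact representation \eqref{eq: compact_lbfgs}, but equivalently as the outcome of applying $m$ consecutive BFGS updates, with the stored pairs $(s_j,y_j)_{j=1}^m$, to the initial matrix $B^{t,0}:=\sigma_0 I$. Writing the intermediate matrices as $B^{t,j+1}=B^{t,j}-\tfrac{B^{t,j}s_j s_j^\top B^{t,j}}{s_j^\top B^{t,j}s_j}+\tfrac{y_j y_j^\top}{y_j^\top s_j}$, the bound will follow from trace and determinant arguments: the trace produces a uniform upper bound on $\lambda_{\max}(B^t)$, and the determinant (combined with that upper bound) produces a uniform lower bound on $\lambda_{\min}(B^t)$.

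First I would extract the key scalar bounds from Assumption \ref{ass: fh}. Since $y_j=\nabla^2 f_{\mathcal{S}^{t-m+j}}(\bar{x}^{t-m+j})s_j$ with $\mu I\preceq \nabla^2 f_{\mathcal{S}}\preceq LI$, I would obtain
\[
\mu\|s_j\|^2\le s_j^\top y_j\le L\|s_j\|^2,\qquad \mu^2\|s_j\|^2 \le \|y_j\|^2\le L^2\|s_j\|^2,
\]
so that $\tfrac{y_j^\top y_j}{y_j^\top s_j}\le L^2/\mu$ and $\tfrac{y_j^\top s_j}{\|s_j\|^2}\ge \mu$. The initial scaling $\sigma_0=\tfrac{y_m^\top y_m}{y_m^\top s_m}$ therefore satisfies $\mu^2/L\le \sigma_0\le L^2/\mu$. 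These inequalities will be the only place Assumption \ref{ass: fh} is used.

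Next I would take traces. Using $\mathrm{tr}(B^{t,j}s_j s_j^\top B^{t,j}/(s_j^\top B^{t,j}s_j))\ge 0$ and the scalar bounds above, a telescoping gives
\[
\mathrm{tr}(B^t)\le \mathrm{tr}(\sigma_0 I)+\sum_{j=1}^m \frac{\|y_j\|^2}{y_j^\top s_j}\le \frac{L^2}{\mu}\,d+m\,\frac{L^2}{\mu}=:m_2,
\]
which bounds $\lambda_{\max}(B^t)\le m_2$ uniformly in $t$. For the lower bound I would use the standard BFGS identity $\det(B^{t,j+1})=\det(B^{t,j})\cdot\frac{y_j^\top s_j}{s_j^\top B^{t,j}s_j}$. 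The trace bound just obtained implies $s_j^\top B^{t,j}s_j\le m_2\|s_j\|^2$, hence each factor is at least $\mu/m_2$, giving
\[
\det(B^t)\ge \sigma_0^{\,d}\bigl(\mu/m_2\bigr)^m\ge \bigl(\mu^2/L\bigr)^d\bigl(\mu/m_2\bigr)^m.
\]
Combining with $\lambda_i(B^t)\le m_2$ for all $i$, the smallest eigenvalue obeys
\[
\lambda_{\min}(B^t)\ge \frac{\det(B^t)}{m_2^{\,d-1}}\ge \frac{(\mu^2/L)^d(\mu/m_2)^m}{m_2^{\,d-1}}=:m_1>0,
\]
and the claim $m_1 I\preceq B^t\preceq m_2 I$ follows.

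I do not expect any essential obstacle: the only delicate point is making sure that the curvature condition $s_j^\top y_j>0$ holds along every update so that all BFGS intermediates are positive definite and the determinant identity is applicable; this is guaranteed by the Hessian-vector construction \eqref{eq: hess-vec} together with the uniform strong convexity in Assumption \ref{ass: fh}, which I would note explicitly at the start. The bounds $m_1,m_2$ depend only on $d$, $m$, $\mu$, $L$ and are independent of $t$, as required.
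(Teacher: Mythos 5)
Your proof is correct: the reduction of the compact representation \eqref{eq: compact_lbfgs} to $m$ recursive BFGS updates from $\sigma_0 I$, the scalar bounds $\mu\|s_j\|^2\le s_j^\top y_j$ and $\|y_j\|^2/(y_j^\top s_j)\le L^2/\mu$ from Assumption \ref{ass: fh}, and the trace--determinant argument together give uniform bounds $m_1,m_2$ depending only on $d$, $l$, $\mu$, $L$. The paper itself offers no proof and simply imports the statement from \cite[Lemma 3.1]{byrd2016stochastic}, and your argument is essentially the standard one given there (that reference uses the slightly sharper Rayleigh-quotient bound $y^\top y/(y^\top s)\le L$, but this only changes the constants).
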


The upcoming lemma demonstrates that, in the L-SVRG scheme \eqref{eq: vr gradient}, if both $\{x_k\}$ and $\{w_k\}$ converge to $x^*$, then the variances of $\{v_k\}$ converges to zero.
This result plays a crucial role in establishing linear convergence for Algorithm \ref{algo: spbfgs}  with a constant step size. Here and in what follows, we denote by $\e[\cdot]$ the expectation taken over all randomness and by $\e_k[\cdot]$ the expectation conditioned on $\{x_i\}_{i=0}^k$ and $\{w_i\}_{i=0}^k$.
\begin{lemma} \label{lem: bv}
	Consider problem \eqref{eq: main prob} and suppose that $f_i$ is convex and $L$-smooth for each $i\in [n]$. Let $v_k$ be defined as in \eqref{eq: vr gradient}, then for any $k\ge 0$, we have $\e_k[v_k] = \nabla f(x_k)$ and
	\begin{equation} \label{eq: lsvrg-1}
		\mathbb{E}_k\left[\left\|v_{k}-\nabla f(x_{k})\right\|^{2}\right]\leq4L[F(x_{k})-F^{*}+F(w_{k})-F^{*}].
	\end{equation}
\end{lemma}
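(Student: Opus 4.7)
The plan is to prove the two claims separately, starting with the easy unbiasedness and then tackling the variance bound via a standard ``plus and minus the optimum'' decomposition combined with Lemma~\ref{lem: pre_bv}.

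First, for the mean, the definition $v_k=\nabla f_{i_k}(x_k)-\nabla f_{i_k}(w_k)+\nabla f(w_k)$ together with uniform sampling of $i_k$ from $[n]$ immediately gives $\e_k[\nabla f_{i_k}(x_k)]=\nabla f(x_k)$ and $\e_k[\nabla f_{i_k}(w_k)]=\nabla f(w_k)$, so the last two terms cancel in expectation and $\e_k[v_k]=\nabla f(x_k)$. Note that, conditional on $\{x_i\}_{i=0}^k$ and $\{w_i\}_{i=0}^k$, the only randomness in $v_k$ is through the index $i_k$.

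For the variance, I would introduce the per-sample vector $\xi_i:=\nabla f_i(x_k)-\nabla f_i(w_k)$, so that $v_k-\nabla f(x_k)=\xi_{i_k}-\e_k[\xi_{i_k}]$. Then the elementary identity $\e\|X-\e X\|^2=\e\|X\|^2-\|\e X\|^2\le \e\|X\|^2$ yields
\begin{equation*}
\e_k\!\left[\|v_k-\nabla f(x_k)\|^2\right]\;\le\;\e_k\!\left[\|\xi_{i_k}\|^2\right]\;=\;\frac{1}{n}\sum_{i=1}^n\|\nabla f_i(x_k)-\nabla f_i(w_k)\|^2 .
\end{equation*}
To relate this to the suboptimality of $x_k$ and $w_k$, I would insert $\pm \nabla f_i(x^*)$ inside each norm and apply the elementary inequality $\|a+b\|^2\le 2\|a\|^2+2\|b\|^2$ to obtain
\begin{equation*}
\|\nabla f_i(x_k)-\nabla f_i(w_k)\|^2 \le 2\|\nabla f_i(x_k)-\nabla f_i(x^*)\|^2 + 2\|\nabla f_i(w_k)-\nabla f_i(x^*)\|^2 .
\end{equation*}

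Averaging over $i\in[n]$ and applying Lemma~\ref{lem: pre_bv} to each of the two resulting sums (once at $x_k$, once at $w_k$) bounds each average by $2L(F(\cdot)-F^*)$, yielding the factor of $4L$ in the final inequality. The only subtlety is checking that Lemma~\ref{lem: pre_bv} is applicable with the current hypothesis (convex and $L$-smooth $f_i$), which it is since that lemma requires exactly these assumptions. There is no genuine obstacle here: the computation is routine, and the only place one must be careful is in keeping track of the conditional expectation and making sure $x^*$ is introduced symmetrically so that the two terms $F(x_k)-F^*$ and $F(w_k)-F^*$ appear with equal coefficients.
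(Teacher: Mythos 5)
Your proposal is correct and follows essentially the same route as the paper's proof: the unbiasedness is verified the same way, the variance is bounded via the identity $\e\|X-\e X\|^2 \le \e\|X\|^2$ applied to $\nabla f_{i_k}(x_k)-\nabla f_{i_k}(w_k)$, and then inserting $\pm\nabla f_{i_k}(x^*)$, applying Young's inequality, and invoking Lemma~\ref{lem: pre_bv} at both $x_k$ and $w_k$ yields the factor $4L$. No discrepancies worth noting.
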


\begin{proof}
	Firstly, it is easy to verify $\e_k[v_k] = \nabla f(x_k)$ by noting that $\e_k[\nabla f_{i_k}(x_k)]=\nabla f(x_k)$ and $\e_k[\nabla f_{i_k}(w_k)]=\nabla f(w_k)$. Moreover, from the fact that $\mathbb{E}\left[\|\zeta-\mathbb{E} \zeta\|^2\right]=\mathbb{E}\left[\|\zeta\|^2\right]-\|\mathbb{E} [\zeta]\|^2$ for any random vector $\zeta$, we have
	\begin{equation}\label{eq: lsvrg-2}
		\begin{aligned}
			\e_k\left[\left\|v_k-\nabla f\left(x_{k}\right)\right\|^2\right] & =\e_k\left[\left\|\nabla f_{i_k}\left(x_{k}\right)-\nabla f_{i_k}(w_k)+\nabla f(w_k)-\nabla f\left(x_k\right)\right\|^2\right] \\
			& =\e_k \left[\left\|\nabla f_{i_k}\left(x_k\right)-\nabla f_{i_k}(w_k)\right\|^2\right]-\left\|\nabla f\left(x_k\right)-\nabla f(w_k)\right\|^2 \\
			& \leq \e_k \left[\left\|\nabla f_{i_k}\left(x_k\right)-\nabla f_{i_k}\left(x^*\right)+\nabla f_{i_k}\left(x^*\right)-\nabla f_{i_k}(w_k)\right\|^2\right].
		\end{aligned}
	\end{equation}
	Apply Young's inequality on the RHS of \eqref{eq: lsvrg-2}, and then apply Lemma \ref{lem: pre_bv}, we obtain 
	\begin{equation*}
		\begin{aligned}
			\e_k\left[\left\|v_k-\nabla f\left(x_{k}\right)\right\|^2\right] & \leq 2\e_k\left[ \left\|\nabla f_{i_k}\left(x_k\right)-\nabla f_{i_k}\left(x^*\right)\right\|^2 \right]+ 2\e_k \left[\left\|\nabla f_{i_k}(w_k)-\nabla f_{i_k}\left(x^*\right)\right\|^2\right] \\
			& =\frac{2}{n} \sum_{i=1}^n \left\|\nabla f_i\left(x_k\right)-\nabla f_i\left(x^*\right)\right\|^2+\frac{2}{n} \sum_{i=1}^n \left\|\nabla f_i(w_k)-\nabla f_i\left(x^*\right)\right\|^2 \\
			& \leq 4 L\left[F\left(x_k\right)-F^*+F(w_k)-F^*\right],
		\end{aligned}
	\end{equation*}
	which completes the proof.
\end{proof}

We are now ready to unveil the convergence behavior of the iterates $\left\{x_k\right\}$ produced by Algorithm \ref{algo: spbfgs} towards the optimal solution $x^*$. For this purpose,  we introduce a Lyapunov function $\phi_k$ defined as
\begin{equation} \label{eq: lyapunov}
	\phi_k:=\left\|x_k-x^*\right\|_{B_k}^2+A\eta_k^2\left[F\left(w_k\right)-F^*\right]+2 \eta_k\left[F\left(x_k\right)-F^*\right], \forall k\geq 0.
\end{equation}
In the following theorem, we show that, with an appropriate constant step size $\eta_k\equiv\eta>0$ and Hessian update frequency $r>0$, the sequence $\left\{\e_k[\phi_k]\right\}$ converges to $0$ linearly, which implies that the distance between $\{x_k\}$ and $x^*$ diminishes linearly to $0$ in expectation.

\begin{theorem} \label{thm: main}
	Let Assumption \ref{ass: fh} hold.
	Choose a positive constant $A$ such that $A>\frac{8 m_2 L}{m_1^2 p}$, where $m_1, m_2>0$ are constants satisfying $m_1 I \preceq B^t \preceq m_2 I$ for any $t\geq 1$ as indicated by Lemma \ref{lemma: bounded hessian}. Let $\{x_k\}$ be the iterates generated by Algorithm \ref{algo: spbfgs} with constant step size $\eta>0$ and Hessian update frequency $r>0$ satisfying
	\begin{equation} \label{eq: eta_choice}
		\eta \leq \min \left\{\frac{m_1}{L}, \frac{2 m_1^2 m_2}{8 m_2^2 L+2 m_1^2 \mu+A p m_1^2 m_2}\right\}
	\end{equation}
and
     \begin{equation*}
     	\left(\frac{m_2}{m_1}\right)^{1/r}\rho<1,
     \end{equation*}
 where $\rho:=\max \left\{1-\frac{\eta \mu}{m_2}, \frac{8 m_2 L}{m_1^2 A}+1-p\right\}.$
	 Then we have $\rho\in (0, 1)$, and for any $k\ge r$, it holds that
	\begin{equation*}
		\mathbb{E}\left[\phi_k\right] \leq\left[\left(\frac{m_2}{m_1}\right)^{\frac{1}{r}} \rho\right]^k \left(\frac{m_2}{m_1}\rho^r\right)^{-1} \phi_{0}.
	\end{equation*}
\end{theorem}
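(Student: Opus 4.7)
The plan is to establish a one-step Lyapunov contraction of the form $\e_k[\phi_{k+1}]\le\rho\phi_k$, modulo an adjustment at iterations where $B_k$ is refreshed, and then telescope. The starting point is Lemma \ref{lem: key lem} applied with $x=x_k$, $y=x^*$, $B=B_k$, and $v=v_k$. After expanding $g_k^\top B_k(x^*-x_k)$ with the three-point identity in the $B_k$-norm, dropping the non-positive remainder $L\eta^3\|g_k\|^2-3\eta^2\|g_k\|_{B_k}^2$ (using $\eta\le m_1/L$ together with $B_k\succeq m_1 I$ from Lemma \ref{lemma: bounded hessian}), and converting the strong-convexity term via $\|x_k-x^*\|^2\ge\|x_k-x^*\|_{B_k}^2/m_2$, one obtains
$$\|x_{k+1}-x^*\|_{B_k}^2\le\big(1-\tfrac{\eta\mu}{m_2}\big)\|x_k-x^*\|_{B_k}^2-2\eta[F(x_{k+1})-F^*]-2\eta\Delta_k^\top(x_{k+1}-x^*),$$
with $\Delta_k:=v_k-\nabla f(x_k)$.

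After taking the conditional expectation $\e_k$, the remaining cross term is controlled by the standard virtual-iterate trick: set $\tilde x_{k+1}:=\p_{\eta,h}^{B_k}(x_k-\eta H_k\nabla f(x_k))$, which is measurable with respect to the past, so $\e_k[\Delta_k^\top(\tilde x_{k+1}-x^*)]=0$ and only $\Delta_k^\top(x_{k+1}-\tilde x_{k+1})$ remains. The $B_k$-nonexpansiveness \eqref{eq:nonexpansion} of the scaled proximal operator yields $\|x_{k+1}-\tilde x_{k+1}\|_{B_k}\le\eta\|\Delta_k\|_{H_k}$, so Cauchy--Schwarz in the $(H_k,B_k)$ pair bounds this contribution by a constant multiple of $\eta^2\e_k[\|\Delta_k\|^2]$ of size $c m_2/m_1^2$; Lemma \ref{lem: bv} then turns $\e_k[\|\Delta_k\|^2]$ into $4L[F(x_k)-F^*+F(w_k)-F^*]$, attaching a term of size $\eta^2 \cdot 8m_2L/m_1^2$ to each of $F(x_k)-F^*$ and $F(w_k)-F^*$.

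Combining these estimates with the Markov identity $\e_k[F(w_{k+1})-F^*]=p[F(x_k)-F^*]+(1-p)[F(w_k)-F^*]$ coming from the reference-point update, adding $A\eta^2\e_k[F(w_{k+1})-F^*]+2\eta\e_k[F(x_{k+1})-F^*]$ to both sides, and denoting by $\tilde\phi_{k+1}$ the Lyapunov value at step $k+1$ with $B_{k+1}$ replaced by $B_k$, I expect to obtain $\e_k[\tilde\phi_{k+1}]\le\rho\phi_k$. This forces matching three coefficients — on $\|x_k-x^*\|_{B_k}^2$, $F(x_k)-F^*$, and $F(w_k)-F^*$ — to the corresponding coefficients of $\phi_k$ times a single $\rho$. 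The lower bound $A>8m_2L/(m_1^2p)$ makes $1-p+8m_2L/(m_1^2A)<1$, and \eqref{eq: eta_choice} is exactly what is required to simultaneously satisfy the $\|x_k-x^*\|_{B_k}^2$-comparison (yielding the $1-\eta\mu/m_2$ contribution to $\rho$) and the $F(x_k)$-comparison; the resulting $\rho$ is the stated maximum and lies in $(0,1)$.

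To pass from $\tilde\phi_{k+1}$ back to $\phi_{k+1}$ one invokes $\|\cdot\|_{B_{k+1}}^2\le(m_2/m_1)\|\cdot\|_{B_k}^2$ from Lemma \ref{lemma: bounded hessian}; this penalty is paid only at the $\lfloor k/r\rfloor$ iterations where the correction pairs are refreshed, so telescoping the one-step inequality over $k$ iterations yields $\e[\phi_k]\le\rho^k(m_2/m_1)^{\lfloor k/r\rfloor}\phi_0$, and $\lfloor k/r\rfloor\ge k/r-1$ rearranges this into the claimed $[(m_2/m_1)^{1/r}\rho]^k\,(m_2\rho^r/m_1)^{-1}\phi_0$. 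The principal difficulty is the simultaneous balancing in the coefficient-matching step: an $A$ chosen too small inflates the $F(w_k)$-coefficient beyond $\rho A\eta^2$, an $\eta$ too large breaks one of the other two comparisons, and the amortized $r$-iteration cycle must still be contractive — i.e.\ $(m_2/m_1)^{1/r}\rho<1$. The three hypotheses of the theorem are exactly calibrated to render these conditions compatible.
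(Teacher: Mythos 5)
Your proposal is correct and follows essentially the same route as the paper's proof: the same application of Lemma \ref{lem: key lem} at $(x_k,x^*)$, the same virtual-iterate decomposition of the cross term $\Delta_k^\top(x_{k+1}-x^*)$ via the nonexpansiveness \eqref{eq:nonexpansion} and Lemma \ref{lem: bv}, the same coefficient-matching against the Lyapunov function, and the same amortization of the $m_2/m_1$ norm-change penalty over each $r$-iteration cycle. Only cosmetic slips are present (the coefficient of $\eta^2\|g_k\|_{B_k}^2$ in the dropped remainder is $1$, not $3$, and with your stronger intermediate bound $\rho^k(m_2/m_1)^{\lfloor k/r\rfloor}\phi_0$ you want $\lfloor k/r\rfloor\le k/r$ rather than $\lfloor k/r\rfloor\ge k/r-1$); neither affects the argument.
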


\begin{proof}
	Plugging in $x=x_{k}$, $v=v_{k}$, $x^{+}=x_{k+1}$, $g=g_{k}:=\frac{1}{\eta}(x_{k}-x_{k+1})$, $\Delta = \Delta_k:= v_k - \nabla f(x_k)$ and $y=x^{*}$ in Lemma \ref{lem: key lem}, we have
	\begin{equation}\label{eq: key lem}
		F^*\geq F(x_{k+1})+g_{k}^{\top}B_{k}(x^{*}-x_{k})+\Delta_{k}^{\top}(x_{k+1}-x^{*})+\eta\|g_k\|_{B_k}^2-\frac{L\eta^2}{2}\|g_k\|^2+\frac{\mu}{2}\|x_{k}-x^{*}\|^{2}.
	\end{equation}
	Using \eqref{eq: key lem}, we expand $\left\|x_{k+1}-x^*\right\|_{B_k}^2$ as follows
	\begin{equation} \label{eq: x_expand-1}
		\begin{aligned}
			& \left\|x_{k+1}-x^*\right\|_{B_k}^2 \\
			= & \left\|x_k-x^*\right\|_{B_k}^2+2\left(x_k-x^*\right)^{\top} B_k\left(x_{k+1}-x_k\right)+\left\|x_{k+1}-x_k\right\|_{B_k}^2 \\
			= & \left\|x_k-x^*\right\|_{B_k}^2-2 \eta g_k^{\top} B_k\left(x_k-x^*\right)+\eta^2\left\|g_k\right\|_{B_k}^2 \\
			\leq & \left\|x_k-x^*\right\|_{B_k}^2+2 \eta\left[F^*-F\left(x_{k+1}\right)\right]-2 \eta \Delta_k^{\top}\left(x_{k+1}-x^*\right) - \eta^2 \|g_k\|_{B_k}^2 + L\eta^3 \|g_k\|^2 -\mu \eta\left\|x_k-x^*\right\|^2.
		\end{aligned}
	\end{equation}
	Since $ m_1 I \preceq B_k \preceq m_2 I$ and $\eta\leq \frac{m_1}{L}$, we have from \eqref{eq: x_expand-1} that
	\begin{equation}\label{eq: x_expand}
		\begin{aligned}
			&\left\|x_{k+1}-x^*\right\|_{B_k}^2\\
			\leq & \left\|x_k-x^*\right\|_{B_k}^2+2 \eta\left[F^*-F\left(x_{k+1}\right)\right]-2 \eta \Delta_{k}^{\top}\left(x_{k+1}-x^*\right)-\eta^2(m_1-L\eta)\|g_k\|^2 - \frac{\mu \eta}{m_2}\left\|x_k-x^*\right\|_{B_k}^2\\
			\leq & \left(1-\frac{\mu \eta}{m_2}\right)\left\|x_k-x^*\right\|_{B_k}^2+2 \eta\left[F^*-F\left(x_{k+1}\right)\right]-2 \eta \Delta_{k}^{\top}\left(x_{k+1}-x^*\right).
		\end{aligned}
	\end{equation}
	
	Next, we bound the last term  $-2 \eta \Delta_{k}^{\top}\left(x_{k+1}-x^*\right)$ on the RHS of (\ref{eq: x_expand}). To this end, we define
	\begin{equation*}
		\bar{x}_{k+1}=\p_{\eta,\;h}^{B_k}\left(x_k-\eta H_k \nabla f\left(x_k\right)\right),
	\end{equation*}
	and split $-2 \eta \Delta_{k}^{\top}\left(x_{k+1}-x^*\right)$ into two parts:
	\begin{equation} \label{eq: ip_delta}
		-2 \eta \Delta_k^{\top}\left(x_{k+1}-x^*\right)= - 2 \eta \Delta_k^{\top}\left(x_{k+1}-\bar{x}_{k+1}\right)-2 \eta \Delta_k^{\top}\left(\bar{x}_{k+1}-x^*\right).
	\end{equation}
	Taking conditional expectation $\e_k[\cdot]$ on $\Delta_k^{\top}\left(\bar{x}_{k+1}-x^*\right)$ gives
	\begin{equation*}
		\mathbb{E}_k\left[\Delta_k^{\top}\left(\bar{x}_{k+1}-x^*\right)\right]=\left(\mathbb{E}_k \left[\Delta_k\right]\right)^{\top}\e_k\left[\bar{x}_{k+1}-x^*\right]=0,
	\end{equation*}
    where the first equality holds due to the independence between $i_k$ and $\mathcal{S}_t$, and the second equality follows from Lemma \ref{lem: bv}.
	For the term $-\Delta_k^{\top}\left(x_{k+1}-\bar{x}_{k+1}\right)$ in \eqref{eq: ip_delta}, applying (\ref{eq:nonexpansion}) and considering the fact that $\|H_k\|\leq \frac{1}{m_1}$, we bound it as follows
	\begin{equation*}
		\begin{aligned}
			-\Delta_k^{\top}\left(x_{k+1}-\bar{x}_{k+1}\right) \leq & \left\|\Delta_k\right\| \cdot\left\|\p_{\eta,\;h}^{B_k}\left(x_k-\eta H_k v_k\right)-\p_{\eta,\;h}^{B_k}\left(x_k-\eta H_k \nabla f\left(x_k\right)\right)\right\| \\
			\leq & \frac{m_2}{m_1}\left\|\Delta_k\right\|\left\|\left(x_k-\eta H_k v_k\right)-\left(x_k-\eta H_k \nabla f\left(x_k\right)\right)\right\| \\
			= & \frac{m_2}{m_1} \eta\left\|\Delta_k\right\| \cdot\left\|H_k \Delta_k\right\| \\
			\leq & \frac{m_2}{m_1^2} \eta\left\|\Delta_k\right\|^2.
		\end{aligned}
	\end{equation*}
	Therefore, taking conditional expectation $\e_k[\cdot]$ on both sides of \eqref{eq: ip_delta} and applying Lemma \ref{lem: bv}, we have
	\begin{equation} \label{eq: e_ip_delta}
		-2 \eta \mathbb{E}_k\left[\Delta_k^{\top}\left(x_{k+1}-x^*\right)\right]
		\leq \frac{2 m_2}{m_1^2} \eta^2 \mathbb{E}_k[\left\|\Delta_k\right\|^2] \leq \frac{8 m_2 L}{m_1^2} \eta^2\left[F\left(x_k\right)-F^*+F\left(w_k\right)-F^*\right].
	\end{equation}
Then, taking conditional expectation $\e_k[\cdot]$ on both sides of \eqref{eq: x_expand} and it follows from \eqref{eq: e_ip_delta} that
	\begin{equation} \label{eq: e_x_expand}
		\begin{aligned}
			& \mathbb{E}_k\left[\left\|x_{k+1}-x^*\right\|_{B_k}^2\right] \\
			\leq & \left(1-\frac{\mu \eta}{m_2}\right)\left\|x_k-x^*\right\|_{B_k}^2-2 \eta \mathbb{E}_k\left[F\left(x_{k+1}\right)-F^*\right]+\frac{8 m_2 L}{m_1^2} \eta^2\left[F\left(x_k\right)-F^*+F\left(w_k\right)-F^*\right].
		\end{aligned}
	\end{equation}
	
	On the other hand, from the update rule \eqref{eq:update_w}, we have
	\begin{equation} \label{eq: lsvrg_w}
		\mathbb{E}_k\left[F\left(w_{k+1}\right)-F^*\right]=(1-p)\left[F\left(w_k\right)-F^*\right]+p\left[F\left(x_k\right)-F^*\right].
	\end{equation}
	Adding $A \eta^2\mathbb{E}_k\left[F\left(w_{k+1}\right)-F^*\right]$ on both sides of \eqref{eq: e_x_expand}, then it follows from (\ref{eq: lsvrg_w}) that
	\begin{equation}\label{eq: add_A}
		\begin{aligned}
			& \mathbb{E}_k\left[\left\|x_{k+1}-x^*\right\|_{B_k}^2+A\eta^2\left(F\left(w_{k+1}\right)-F^*\right)\right] \\
			\leq & \left(1-\frac{\mu \eta}{m_2}\right)\left\|x_k-x^*\right\|_{B_k}^2-2 \eta \mathbb{E}_k\left[F\left(x_{k+1}\right)-F^*\right]+\left(\frac{8 m_2 L}{m_1^2} +A p\right)\eta^2\left[F\left(x_k\right)-F^*\right] \\
			& \quad+\left(\frac{8 m_2 L}{A m_1^2}+1-p\right)A\eta^2\left[F\left(w_k\right)-F^*\right].
		\end{aligned}
	\end{equation}
 With the definition of $\rho$, we obtain from \eqref{eq: add_A} that
	\begin{equation}\label{eq: add_A_1}
		\begin{aligned}
			& \mathbb{E}_k\left[\left\|x_{k+1}-x^*\right\|_{B_k}^2+A\eta^2\left(F\left(w_{k+1}\right)-F^*\right)+2 \eta\left(F\left(x_{k+1}\right)-F^*\right)\right] \\
			\leq& \rho\left[\left\|x_k-x^*\right\|_{B_k}^2+A\eta^2\left(F\left(w_k\right)-F^*\right)+\frac{\eta^2}{\rho}\left(\frac{8 m_2 L}{m_1^2}+A p\right)\left(F\left(x_k\right)-F^*\right)\right].
		\end{aligned}
	\end{equation}
	Note that $\eta > 0$ guarantees $1-\frac{\mu \eta}{m_2} < 1$, and the condition $A>\frac{8 m_2 L}{m_1^2 p}$ ensures that $0<\frac{8 m_2 L}{m_1^2 A}+1-p < 1$. Therefore, we have $\rho \in (0, 1)$.
	Since	
	\begin{equation*}
		\eta \leq \frac{2 m_1^2 m_2}{8 m_2^2 L+2 m_1^2 \mu+A p m_1^2 m_2},
	\end{equation*}
	we have that
		\begin{equation}\label{eq: eta-ineq}
		\eta \leq \frac{2-\frac{2 \eta \mu}{m_2}}{\frac{8 m_2 L}{m_1^2}+A p}.
	\end{equation}
Then, it follows from $\rho\geq 1-\frac{\mu \eta}{m_2}$ and (\ref{eq: eta-ineq}) that
 \begin{equation}\label{eq: eta_cond}
     \frac{\eta^2}{\rho}\left(\frac{8 m_2 L}{m_1^2}+A p\right) \leq 2 \eta.
 \end{equation}
	Hence, combining \eqref{eq: add_A_1} with \eqref{eq: eta_cond} we have
	\begin{equation}\label{eq: phi_k}
		\begin{aligned}
			& \mathbb{E}_k\left[\left\|x_{k+1}-x^*\right\|_{B_k}^2+A\eta^2\left(F\left(w_{k+1}\right)-F^*\right)+2 \eta\left(F\left(x_{k+1}\right)-F^*\right)\right] \\
			\leq&\rho\left[\left\|x_k-x^*\right\|_{B_k}^2+A\eta^2\left(F\left(w_k\right)-F^*\right)+2\eta\left(F\left(x_k\right)-F^*\right)\right].
		\end{aligned}
	\end{equation}

    For any $k\ge r$, from the condition $m_1 I \preceq B_k \preceq m_2 I$ we have
	\begin{equation}\label{eq: thm1-1}
		\left\|x_{k+1}-x^*\right\|_{B_k}^2 \ge m_1\left\|x_{k+1}-x^*\right\|^2 \ge \frac{m_1}{m_2}\left\|x_{k+1}-x^*\right\|_{B_{k+1}}^2.
	\end{equation}
	Also notice that within any consecutive $r$ iterations, $H_k$ only changes once.
	Therefore, it follows from \eqref{eq: phi_k} and \eqref{eq: thm1-1} that
	\begin{equation}\label{eq: thm1-2}
		\e\left[\phi_k\right] \leq \left(\frac{m_2}{m_1}\rho^r\right) \e\left[\phi_{k-r}\right]\leq \left(\frac{m_2}{m_1}\rho^r\right)^{\lfloor \frac{k}{r} \rfloor} \e\left[\phi_{k-r\lfloor \frac{k}{r} \rfloor}\right],
	\end{equation}
 where $\phi_k$ is the Lyapunov function defined by \eqref{eq: lyapunov}.
 On the other hand, since $B_k=I$ for all $k < r$, we have
	\begin{equation*}
		\phi_k=\left\|x_k-x^*\right\|^2+A\eta^2\left[F\left(w_k\right)-F^*\right]+2 \eta\left[F\left(x_k\right)-F^*\right],;\forall k<r,
	\end{equation*}
	which together with \eqref{eq: phi_k} gives
	\begin{equation} \label{eq: thm1-0}
		\e\left[\phi_{k}\right] \leq \rho\cdot\e\left[\phi_{k-1}\right]\leq \rho^k \phi_0\leq \phi_0, ~\forall k<r.
	\end{equation}
 Note that $k-r\lfloor \frac{k}{r} \rfloor < r$ and $\left(\frac{m_2}{m_1}\right)^{1/r}\rho<1$. Then combining \eqref{eq: thm1-2} with \eqref{eq: thm1-0} leads to
	\begin{equation}\label{eq: phi_convergence}
		\e\left[\phi_k\right] \leq \left(\frac{m_2}{m_1}\rho^r\right)^{\lfloor \frac{k}{r} \rfloor} \phi_{0}\leq \left[\left(\frac{m_2}{m_1}\right)^{\frac{1}{r}}\rho\right]^k \left(\frac{m_2}{m_1}\rho^r\right)^{-1} \phi_{0},
	\end{equation}
	which completes the proof.
\end{proof}

	Theorem \ref{thm: main} shows that the the sequence of Lyapunov functions $\{\phi_k\}$ converges to $0$ linearly. As a direct consequence, we have that $\{x_k\}$ converges to $x^*$ linearly, and the two sequences of the objective values $\{F(x_k)\}$ and $\{F(w_k)\}$ also converge to $F^*$ at linear convergence rates.

\subsection{Convergence of Algorithm \ref{algo: plsvrg}}\label{se:con_plsvrg}
In this subsection, we establish a linear convergence result and complexity analysis for Algorithm \ref{algo: plsvrg}. To this end, we first note that Algorithm \ref{algo: plsvrg} can be regarded as a special case of Algorithm \ref{algo: spbfgs} wherein the matrix $B_k$ remains fixed as the identity matrix.
Therefore, by setting $m_1 = m_2 = 1$ in Theorem \ref{thm: main}, we can derive the following convergence result for Algorithm \ref{algo: plsvrg}.

\begin{theorem} \label{thm: plsvrg}
	Let Assumption \ref{ass: fh} hold.
    Let $\{x_k\}$ be the iterates generated by Algorithm \ref{algo: plsvrg} with a constant step size $\eta>0$ satisfying
	\begin{equation} \label{eq: eta_requirement}
		0 < \eta \leq \min \left\{\frac{1}{L}, \frac{2}{18 L+2 \mu}\right\},
	\end{equation}
	we then have
	\begin{equation}\label{eq: plsvrg_convergence}
		\e\left[\phi_k\right] \leq \max \left\{1-\eta \mu, 1-\frac{p}{5}\right\}^k \phi_0,
	\end{equation}
	where $\phi_k$ is the Lyapunov function defined in \eqref{eq: lyapunov} with $A = \frac{10 L}{p}$ and $B_k = I$, i.e.,
	\begin{equation*}
		\phi_k := \left\|x_k-x^*\right\|^2+\frac{10 L}{p}\eta^2\left[F\left(w_k\right)-F^*\right]+2 \eta\left[F\left(x_k\right)-F^*\right].
	\end{equation*}
\end{theorem}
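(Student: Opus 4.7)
The plan is to obtain Theorem \ref{thm: plsvrg} as a direct specialization of Theorem \ref{thm: main}, exploiting the fact that Algorithm \ref{algo: plsvrg} coincides with Algorithm \ref{algo: spbfgs} when the Hessian approximation is frozen at $B_k \equiv I$. Under this identification, the uniform eigenvalue bounds from Lemma \ref{lemma: bounded hessian} trivialize to $m_1 = m_2 = 1$, and the Hessian update frequency $r$ plays no role (equivalently, one may take $r = 1$), so the constraint $(m_2/m_1)^{1/r}\rho < 1$ reduces to the automatic condition $\rho < 1$.

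First I would verify the parameter choices. Setting $A = 10L/p$ satisfies the required inequality $A > 8m_2 L/(m_1^2 p) = 8L/p$ from Theorem \ref{thm: main}. Plugging $m_1 = m_2 = 1$ and $A = 10L/p$ into the step-size bound \eqref{eq: eta_choice} gives
\begin{equation*}
\eta \leq \min\left\{\frac{1}{L},\ \frac{2}{8L + 2\mu + Ap}\right\} = \min\left\{\frac{1}{L},\ \frac{2}{18L + 2\mu}\right\},
\end{equation*}
which matches \eqref{eq: eta_requirement}. Next I would compute the contraction factor: with the same substitutions,
\begin{equation*}
\rho = \max\left\{1 - \frac{\eta\mu}{m_2},\ \frac{8 m_2 L}{m_1^2 A} + 1 - p\right\} = \max\left\{1 - \eta\mu,\ \frac{4p}{5} + 1 - p\right\} = \max\left\{1 - \eta\mu,\ 1 - \frac{p}{5}\right\},
\end{equation*}
which is the factor appearing in \eqref{eq: plsvrg_convergence}.

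Finally, I would invoke the conclusion of Theorem \ref{thm: main}. Because $m_2/m_1 = 1$, the prefactor $(m_2/m_1)\rho^{-r}$ collapses, and the bound $\e[\phi_k] \leq [(m_2/m_1)^{1/r}\rho]^k (m_2/m_1 \rho^r)^{-1}\phi_0$ simplifies to $\e[\phi_k] \leq \rho^k \phi_0$, with the Lyapunov function $\phi_k$ specialized via $B_k = I$ and $A = 10L/p$ to the form displayed in the theorem statement. This yields exactly \eqref{eq: plsvrg_convergence}.

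I do not expect substantial obstacles here since the argument is purely a substitution into an already established result; the only point demanding a bit of care is tracking that the $r$-dependent prefactor in \eqref{eq: phi_convergence} degenerates cleanly when $m_1 = m_2$, which it does. An alternative route would be to replay the proof of Theorem \ref{thm: main} from scratch in the special case $B_k \equiv I$, which would remove the appearance of $r$ from the beginning and slightly shorten the algebra, but no new ideas would be required.
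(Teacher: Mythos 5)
Your overall route is the paper's: specialize Theorem \ref{thm: main} with $m_1 = m_2 = 1$, $A = 10L/p$, and $B_k \equiv I$. Your verifications that $A=10L/p$ satisfies $A > 8L/p$, that the step-size bound \eqref{eq: eta_choice} reduces to \eqref{eq: eta_requirement}, and that the contraction factor becomes $\rho = \max\left\{1-\eta\mu,\, 1-\frac{p}{5}\right\}$ all match the paper exactly.

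The one step that does not go through as written is the last one. The prefactor in the conclusion of Theorem \ref{thm: main} is $\left(\frac{m_2}{m_1}\rho^r\right)^{-1}$, which with $m_1=m_2=1$ becomes $\rho^{-r}>1$; it does not collapse to $1$. So invoking the final statement of Theorem \ref{thm: main} only yields $\e[\phi_k]\leq \rho^{k-r}\phi_0$ for $k\geq r$ (i.e.\ $\rho^{k-1}\phi_0$ if you take $r=1$), which is strictly weaker than the claimed $\e[\phi_k]\leq\rho^k\phi_0$ and says nothing about $k<r$. To obtain the exact constant in \eqref{eq: plsvrg_convergence} you must step inside the proof of Theorem \ref{thm: main} and use the one-step contraction \eqref{eq: phi_k}, which holds for every $k\geq 0$; since $B_k\equiv I$ the norm $\|\cdot\|_{B_k}$ never changes between iterations, the loss factor $m_2/m_1$ in \eqref{eq: thm1-1} is not incurred, and telescoping \eqref{eq: phi_k} gives $\e[\phi_k]\leq\rho^k\phi_0$ directly. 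This is exactly what the paper does, and it is the ``replay the proof'' alternative you mention at the end --- except that here it is not optional but necessary for the stated bound.
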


\begin{proof}
	Setting $m_1 = m_2 = 1$ in Theorem \ref{thm: main}, we have that the step size requirement \eqref{eq: eta_choice} reduces to
	\begin{equation*}
		0<\eta \leq \min \left\{\frac{1}{L}, \frac{2}{8 L+2 \mu+A p}\right\},
	\end{equation*}
which together with $A = \frac{10L}{p}$ gives \eqref{eq: eta_requirement}.
	Note that the condition $	\left(\frac{m_2}{m_1}\right)^{1/r}\rho<1$ in Theorem \ref{thm: main} is satisfied automatically. Moreover, the contraction factor $\rho$ defined in Theorem \ref{thm: main} reduces to $\rho = \max \left\{1-\eta \mu, 1-\frac{p}{5}\right\}$ since $m_1 = m_2 = 1$ and $A = \frac{10L}{p}$. Then, substituting $B_k=I$ and $A= \frac{10L}{p}$ in \eqref{eq: phi_k}, we obtain
	\begin{equation*}
		\e\left[\phi_k\right] \leq \rho^k \phi_0 = \max \left\{1-\eta\mu,1-\frac{p}{5}\right\}^k \phi_0,
	\end{equation*}
	which completes the proof.
\end{proof}

Theorem \ref{thm: plsvrg} implies the following iteration complexity result for Algorithm \ref{algo: plsvrg}.

\begin{corollary}
	\label{coro: 1}
	Consider the same setting as that in Theorem \ref{thm: plsvrg}.
	Suppose we choose $\eta = \frac{1}{10L}$.
	Then for any $\epsilon>0$, we have $\e\left[\phi_k\right] \leq \epsilon\phi_0$ if
	\begin{equation} \label{eq: k_complexity}
		k \geq 10\left(\frac{1}{p}+\frac{L}{\mu}\right) \log\left(\frac{1}{\epsilon}\right).
	\end{equation}
	Furthermore, if we select $p\in\left[\min \left\{\frac{c}{n}, \frac{c \mu}{L}\right\}, \max \left\{\frac{c}{n}, \frac{c \mu}{L}\right\}\right]$ for any constant $c>0$, then the expected total number of stochastic gradient oracles is
	\begin{equation} \label{eq: oracle_complexity}
		O\left(\left(n+\frac{L}{\mu}\right) \log \frac{1}{\epsilon}\right).
	\end{equation}
\end{corollary}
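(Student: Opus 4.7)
The plan is to treat Corollary \ref{coro: 1} as a direct consequence of Theorem \ref{thm: plsvrg}, proceeding in two stages. In the first stage, I would substitute $\eta = 1/(10L)$ into the convergence bound \eqref{eq: plsvrg_convergence}, after verifying that this step size satisfies \eqref{eq: eta_requirement}: the condition $1/(10L) \leq 1/L$ is immediate, while $1/(10L) \leq 2/(18L + 2\mu)$ reduces to $\mu \leq L$, which holds under Assumption \ref{ass: fh}. With this choice, the contraction factor reads $\rho = \max\{1 - \mu/(10L),\, 1 - p/5\}$. To extract an iteration count, I would invoke $1 - x \leq e^{-x}$ to obtain $\rho^k \leq \exp\bigl(-k \min\{\mu/(10L),\, p/5\}\bigr)$, so that $\e[\phi_k] \leq \epsilon \phi_0$ is guaranteed whenever $k \geq \log(1/\epsilon) \cdot \max\{10L/\mu,\, 5/p\}$. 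The crude but convenient bound $\max\{10L/\mu,\, 5/p\} \leq 10(L/\mu + 1/p)$ then yields \eqref{eq: k_complexity}.

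For the oracle complexity, I would first tally the expected gradient cost per iteration of Algorithm \ref{algo: plsvrg}. Forming $v_k$ via \eqref{eq: vr gradient} requires two component gradients $\nabla f_{i_k}(x_k)$ and $\nabla f_{i_k}(w_k)$, together with $\nabla f(w_k)$; the last quantity is cached and only needs to be recomputed when $w$ is updated, which happens with probability $p$ at the cost of $n$ component gradient evaluations. Hence the expected per-iteration oracle count is $O(1 + pn)$. Multiplying this by the iteration bound from the first stage yields an expected total of
\begin{equation*}
O\bigl((1/p + L/\mu)(1 + pn) \log(1/\epsilon)\bigr) = O\bigl((1/p + n + L/\mu + pnL/\mu) \log(1/\epsilon)\bigr).
\end{equation*}
For $p$ in the stated interval, I would check the two extreme cases separately: when $1/n \leq \mu/L$, taking $p \asymp 1/n$ makes $1/p \asymp n$ and $pnL/\mu \asymp L/\mu$; when $\mu/L < 1/n$, taking $p \asymp \mu/L$ makes $1/p \asymp L/\mu$ and $pnL/\mu \asymp n$. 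Both endpoints of the interval thus collapse the total to $O((n + L/\mu) \log(1/\epsilon))$, and monotonicity in $p$ extends this to any value in between, giving \eqref{eq: oracle_complexity}.

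The only mildly subtle ingredient is the oracle accounting: one must be explicit that the full gradient $\nabla f(w_k)$ is cached across the (geometrically distributed) blocks of iterations during which $w$ does not change, since otherwise each iteration would nominally incur $n$ queries and the argument would lose its advantage over full proximal gradient. Beyond that caching observation and the case split to match the stated range of $p$, the entire corollary is pure algebraic bookkeeping on top of Theorem \ref{thm: plsvrg}, so I do not anticipate any genuine analytic obstacle.
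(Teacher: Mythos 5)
Your proposal is correct and follows essentially the same route as the paper: verify that $\eta=\frac{1}{10L}$ satisfies \eqref{eq: eta_requirement}, apply \eqref{eq: plsvrg_convergence} with $\rho=\max\{1-\frac{\mu}{10L},1-\frac{p}{5}\}$, use $1-x\le e^{-x}$ (the paper's $\log\beta\le\beta-1$) together with $\frac{1}{1-\rho}=\max\{\frac{10L}{\mu},\frac{5}{p}\}\le 10(\frac{L}{\mu}+\frac{1}{p})$, and then multiply by the $O(1+pn)$ expected per-iteration oracle cost and simplify over the stated range of $p$. The only cosmetic difference is that your endpoint case analysis is slightly more explicit than the paper's one-line conclusion (and ``monotonicity'' should really be ``each of the two offending terms is monotone, so bound each by its worst endpoint''), but this changes nothing of substance.
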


\begin{proof}
	To start off, note that $\eta = \frac{1}{10L}$ satisfies the step size requirement \eqref{eq: eta_requirement} since $\mu \leq L$, hence plugging $\eta = \frac{1}{10L}$ into \eqref{eq: plsvrg_convergence} gives
	\begin{equation} \label{eq: thm2-1}
		\e\left[\phi_k\right] \leq \rho^k \phi_0,
	\end{equation}
	with $\rho = \max \left\{1-\frac{\mu}{10 L}, 1-\frac{p}{5}\right\}$. From \eqref{eq: thm2-1} and the inequality $\log(\beta)\leq \beta-1$ for any $\beta>0$, we know that $\e\left[\phi_k\right] \leq \epsilon \phi_0$ as long as
	\begin{equation} \label{eq: k_req}
		k \geq \frac{1}{1-\rho} \log \left(\frac{1}{\epsilon}\right).
	\end{equation}
	In addition, notice that
	\begin{equation*}
		10 \left(\frac{L}{\mu} + \frac{1}{p}\right) > \max\left\{\frac{10L}{\mu}, \frac{5}{p}\right\} = \frac{1}{1-\rho}.
	\end{equation*}
	Therefore, choosing $k = 10 \left(\frac{1}{p}+\frac{L}{\mu}\right)\log \left(\frac{1}{\epsilon}\right)$ satisfies \eqref{eq: k_req}, which proves \eqref{eq: k_complexity}.
	
	To show \eqref{eq: oracle_complexity}, note that at each iteration, Algorithm \ref{algo: plsvrg} calls $O(1 + pn)$ stochastic gradient in expectation.
	Combining it with the iteration complexity \eqref{eq: k_complexity} leads to the expected total number of stochastic gradient oracles
	\begin{equation*}
		O\left(\left(\frac{1}{p}+n+\frac{L}{\mu}+\frac{L p n}{\mu}\right) \log \frac{1}{\epsilon}\right).
	\end{equation*}
	Therefore, given any $c>0$, any choice of $p \in [\min \{c / n, c \mu / L\}, \max \{c / n, c \mu / L\}]$ leads to the total complexity \eqref{eq: oracle_complexity}, which completes the proof.
\end{proof}

\begin{remark}
	The complexity result $O\left(\left(n+\frac{L}{\mu}\right) \log \frac{1}{\epsilon}\right)$ of Algorithm \ref{algo: plsvrg} in terms of stochastic gradient oracles aligns with the one of the L-SVRG presented in \cite{kovalev2020don}. Consequently, we generalize the L-SVRG to a proximal variant and establish its linear convergence by utilizing the techniques developed in the proof of Theorem \ref{thm: main}.
\end{remark}

\subsection{Generalization of Algorithm \ref{algo: spbfgs}}\label{se:generalization}
As mentioned in Remark \ref{re: generalization}, it is possible to generalize Algorithm \ref{algo: spbfgs} by defining the stochastic gradient $v_k$ using VR methods different from the L-SVRG. In this subsection, we establish linear convergence results for these generalized versions.
To this end, recall that in the proof of Theorem \ref{thm: main}, we resort to two properties of $v_k$ defined by the L-SVRG scheme \eqref{eq: vr gradient}, namely, \eqref{eq: lsvrg-1} and \eqref{eq: lsvrg_w}.
Note that these two inequalities are special cases of the following general form:
\begin{equation} \label{eq: ass_vr}
	\left\{
	\begin{aligned}
		& \e_k\left[\left\|v_k-\nabla f\left(x_k\right)\right\|^2\right] \leq a\left(F\left(x_k\right)-F^*\right)+b \xi_k, \\
		& \e_k\left[\xi_{k+1}\right] \leq(1-\gamma) \xi_k+c\left(F\left(x_k\right)-F^*\right),
	\end{aligned}
\right.
\end{equation}
where $a, b, c\ge0$ and $\gamma\in(0, 1]$ are constants unrelated to the index $k$, and $\{\xi_k\}_{k\ge 0}$ is a sequence of random variables.
Indeed, it is easy to verify that $v_k$ defined by the L-SVRG \eqref{eq: vr gradient} satisfies \eqref{eq: ass_vr} with $a = b = 4L$, $c=\gamma = p$, and $\xi_k = F(w_{k}) - F^*$.

In the remaining part of this subsection, rather than specifying an explicit expression on $v_k$, we make the following assumption concerning its general properties.

\begin{assumption} \label{ass: vr}
	For any $k\ge 0$, the stochastic gradient $v_k$ is an unbiased estimator of $\nabla f(x_k)$, i.e., $\e_k[v_k] = \nabla f(x_k)$.
	Furthermore, there exist constants $a, b, c\ge 0$ and $\gamma\in(0, 1]$, and a sequence of random variables $\{\xi_k\}_{k\ge 0}$ such that \eqref{eq: ass_vr} is satisfied.
\end{assumption}

It can be demonstrated that, in addition to the L-SVRG \cite{kovalev2020don}, the stochastic gradient $v_k$ generated by other VR methods like the SAGA  \cite{defazio2014saga} and the SEGA \cite{hanzely2018sega} also satisfies Assumption \ref{ass: vr}.
For instance, in the SAGA, each component $f_i$ is assigned with a reference point $w_k^i$.
At the $k$-th iteration, a random index $i_k$ is sampled from $[n]$, and the update $w_{k+1}^{i_k}=x_k$ is performed while other reference points remain unchanged.
Then the stochastic gradient $v_k$ is computed as
\begin{equation}\label{eq: saga}
	v_{k}=\nabla f_{i_k}(x_k)-\nabla f_{i_k}(w_{k}^{i_k})+\frac{1}{n}\sum_{j=1}^n\nabla f_j(w_{k}^j).
\end{equation}
Next, we show that $v_k$ defined by \eqref{eq: saga} satisfies Assumption \ref{ass: vr}, in which case $\e_k[\cdot]$ denotes the expectation conditioned on $x_k$ and $\{w_k^i\}_{i=1}^n$.
Similar results can be established for some other VR methods.

	
	

\begin{proposition}
	Consider problem \eqref{eq: main prob}. Suppose each component $f_i$ is convex and $L$-smooth, and $x^* \in \argmin_{x\in \r^d} F(x)$.
	Then, the stochastic gradient $v_k$ defined by \eqref{eq: saga} satisfies Assumption \ref{ass: vr} with
	$a=4 L, b=2, \gamma=\frac{1}{n}, c=\frac{2 L}{n}$ and
	\begin{equation*}
		\xi_k=\frac{1}{n} \sum_{i=1}^n\left\|\nabla f_i\left(w_k^i\right)-\nabla f_i\left(x^*\right)\right\|^2.
	\end{equation*}
\end{proposition}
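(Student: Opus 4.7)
The plan is to verify each of the three required properties in Assumption \ref{ass: vr} directly from the SAGA update rule, using only the identity $\e[\|\zeta - \e\zeta\|^2] \leq \e[\|\zeta\|^2]$, Young's inequality, and Lemma \ref{lem: pre_bv}. The unbiasedness $\e_k[v_k] = \nabla f(x_k)$ is immediate, since $i_k$ is sampled uniformly from $[n]$ and the conditioning fixes $\{w_k^i\}_{i=1}^n$, so $\e_k[\nabla f_{i_k}(x_k)] = \nabla f(x_k)$ and $\e_k[\nabla f_{i_k}(w_k^{i_k})] = \frac{1}{n}\sum_j \nabla f_j(w_k^j)$, whence the control variate cancels in expectation.

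For the first inequality in \eqref{eq: ass_vr}, I would treat $v_k - \nabla f(x_k)$ as a zero-mean random vector and apply the variance bound to obtain
\[
\e_k\!\left[\|v_k - \nabla f(x_k)\|^2\right] \leq \e_k\!\left[\|\nabla f_{i_k}(x_k) - \nabla f_{i_k}(w_k^{i_k})\|^2\right].
\]
Inserting $\pm \nabla f_{i_k}(x^*)$ and invoking Young's inequality $\|u+w\|^2 \leq 2\|u\|^2 + 2\|w\|^2$ splits the right-hand side into two symmetric pieces. Taking conditional expectation over $i_k$ turns the first piece into $\frac{2}{n}\sum_i \|\nabla f_i(x_k) - \nabla f_i(x^*)\|^2$, which is bounded by $4L(F(x_k)-F^*)$ via Lemma \ref{lem: pre_bv}; the second piece becomes $\frac{2}{n}\sum_i \|\nabla f_i(w_k^i) - \nabla f_i(x^*)\|^2 = 2\xi_k$ by definition of $\xi_k$. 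This delivers exactly the constants $a = 4L$ and $b = 2$.

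For the recursion on $\xi_k$, I would exploit the fact that under SAGA only one reference point changes per iteration. For each fixed index $i$, the event $i_k = i$ has probability $1/n$ and causes $w_{k+1}^i = x_k$, while with probability $1 - 1/n$ we keep $w_{k+1}^i = w_k^i$. Therefore
\[
\e_k\!\left[\|\nabla f_i(w_{k+1}^i)-\nabla f_i(x^*)\|^2\right] = \tfrac{1}{n}\|\nabla f_i(x_k)-\nabla f_i(x^*)\|^2 + \bigl(1-\tfrac{1}{n}\bigr)\|\nabla f_i(w_k^i)-\nabla f_i(x^*)\|^2.
\]
Averaging over $i$ and applying Lemma \ref{lem: pre_bv} to the $x_k$-term gives $\e_k[\xi_{k+1}] \leq (1-\tfrac{1}{n})\xi_k + \tfrac{2L}{n}(F(x_k)-F^*)$, matching $\gamma = 1/n$ and $c = 2L/n$.

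There is no substantive obstacle: the main subtlety is bookkeeping the two independent sources of randomness in $v_k$, namely the sampling of $i_k$ used to build both $v_k$ and the update of the single reference point $w_{k+1}^{i_k}$. I would be careful to condition properly on $\{w_k^i\}$ before taking expectation in the first inequality (so that $\xi_k$ is deterministic under $\e_k$), and to recognize in the second inequality that the per-index events $\{i_k = i\}$ are disjoint and exhaustive, which is what makes the one-point-change decomposition exact rather than merely an upper bound.
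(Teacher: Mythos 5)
Your proof is correct and follows essentially the same route as the paper's: the variance identity $\e[\|\zeta-\e\zeta\|^2]\leq\e[\|\zeta\|^2]$ applied to the zero-mean residual, the insertion of $\pm\nabla f_{i_k}(x^*)$ with Young's inequality and Lemma \ref{lem: pre_bv} for the first bound, and the exact one-point-change decomposition over the disjoint events $\{i_k=i\}$ for the recursion on $\xi_k$. No gaps; the constants come out exactly as in the paper.
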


\begin{proof}
	To start off, it is straightforward to verify $\e_k[v_k] = \nabla f(x_k)$. Now, we estimate the variance of the stochastic gradient $v_k$ defined by \eqref{eq: saga}.
	First, it follows from \eqref{eq: saga} that
	\begin{equation}\label{eq: saga-1}
		\begin{aligned}
			\e_k\left[\left\|v_k-\nabla f\left(x_k\right)\right\|^2\right] = &\e_k\left[\left\|\nabla f_{i_k}\left(x_k\right)-\nabla f_{i_k}\left(w_k^{i_k}\right)+\frac{1}{n} \sum_{j=1}^n \nabla f_j\left(w_k^j\right)-\nabla f\left(x_k\right)\right\|^2\right] \\
			= &\e_k\left[\left\|\nabla f_{i_k}\left(x_k\right)-\nabla f_{i_k}\left(w_k^{i_k}\right)-\e_k\left[\nabla f_{i_k}\left(x_k\right)-\nabla f_{i_k}\left(w_k^{i_k}\right)\right]\right\|^2\right] \\
			\leq & \e_k\left[\left\|\nabla f_{i_k}\left(x_k\right)-\nabla f_{i_k}\left(x^*\right)+\nabla f_{i_k}\left(x^*\right)-\nabla f_{i_k}\left(w_k^{i_k}\right)\right\|^2\right].
		\end{aligned}
	\end{equation}
	Apply Young's inequality on the RHS of \eqref{eq: saga-1} and then apply Lemma \ref{lem: pre_bv}, we have
	\begin{equation*}
		\begin{aligned}
			\e_k\left[\left\|v_k-\nabla f\left(x_k\right)\right\|^2\right] \leq & 2 \e_k\left[\left\|\nabla f_{i_k}\left(x_k\right)-\nabla f_{i_k}\left(x^*\right)\right\|^2\right]+2 \e_k\left[\left\|\nabla f_{i_k}\left(w_k^{i_k}\right)-\nabla f_{i_k}\left(x^*\right)\right\|^2\right] \\
			\leq & 4 L\left(F\left(x_k\right)-F^*\right)+\frac{2}{n} \sum_{i=1}^n\left\|\nabla f_i\left(w_k^i\right)-\nabla f_i\left(x^*\right)\right\|^2\\
            = & 4 L\left(F\left(x_k\right)-F^*\right)+2\xi_k,
		\end{aligned}
	\end{equation*}
	
	Next, we proceed to estimate $\e_k[\xi_{k+1}]$ as follows
	\begin{equation*}
		\begin{aligned}
			\e_k\left[\xi_{k+1}\right] & =\frac{1}{n} \sum_{i=1}^n \e_k\left[\left\|\nabla f_i\left(w_{k+1}^i\right)-\nabla f_i\left(x^*\right)\right\|^2\right] \\
			& =\frac{1}{n} \sum_{i=1}^n\left(\frac{n-1}{n}\left\|\nabla f_i\left(w_k^i\right)-\nabla f_i\left(x^*\right)\right\|^2+\frac{1}{n}\left\|\nabla f_i\left(x_k\right)-\nabla f_i\left(x^*\right)\right\|^2\right) \\
			& \leq\left(1-\frac{1}{n}\right) \xi_k+\frac{2 L}{n}\left(F\left(x_k\right)-F^*\right).
		\end{aligned}
	\end{equation*}
	The proof is now complete.
\end{proof}

In the subsequent theorem, we extend Theorem \ref{thm: main} and demonstrate that a linear convergence rate can be achieved by generalizations of Algorithm \ref{algo: spbfgs}, wherein the stochastic gradient $v_k$ satisfies Assumption \ref{ass: vr}.

\begin{theorem}\label{thm: 2}
	Let Assumption \ref{ass: fh} hold and consider implementing a variant of Algorithm \ref{algo: spbfgs}, wherein the stochastic gradients $\{v_k\}$ are defined such that Assumption \ref{ass: vr} is satisfied.
	Suppose, we choose constant $A>0$, step size $\eta>0$, and Hessian update frequency $r>0$ such that
	\begin{equation*}
		\rho :=\max \left\{1-\frac{\eta \mu}{m_2}, \frac{2 m_2 b}{m_1^2 A}+1-\gamma\right\} \in (0, 1),~\left(\frac{2 m_2 a}{m_1^2}+A c\right)\eta \leq 2 \rho, ~\text{and}~\left(\frac{m_2}{m_1}\right)^{\frac{1}{r}} \rho < 1,
	\end{equation*}
	where $m_1$ and $m_2$ are positive constants such that $m_1 I \preceq B_k \preceq m_2 I$ for any $k\geq 0$.
	Then for any $k\ge r$, we have
	\begin{equation*}
		\mathbb{E}\left[V_k\right] \leq\left[\left(\frac{m_2}{m_1}\right)^{\frac{1}{r}} \rho\right]^k \left(\frac{m_2}{m_1}\rho^r\right)^{-1} \phi_{0},
	\end{equation*}
	where $V_k$ is a Lyapunov function defined by
	\begin{equation*}
		V_k=\left\|x_k-x^*\right\|_{B_k}^2+A\eta^2 \xi_k+2 \eta\left[F\left(x_k\right)-F^*\right].
	\end{equation*}
\end{theorem}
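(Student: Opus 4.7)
The proof will follow the paradigm established in Theorem 4.3 (the L-SVRG version), with the specific L-SVRG identities replaced by the general two-inequality system in Assumption 4.4. The role previously played by $F(w_k)-F^*$ (with $a=b=4L$ and $c=\gamma=p$) is now taken by the abstract potential $\xi_k$, and the generalized Lyapunov function is $V_k = \|x_k-x^*\|_{B_k}^2 + A\eta^2 \xi_k + 2\eta[F(x_k)-F^*]$.

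The plan is as follows. First I will invoke Lemma 2.2 with $x=x_k$, $v=v_k$, $x^+=x_{k+1}$, $g_k=\tfrac{1}{\eta}(x_k-x_{k+1})$, $\Delta_k = v_k-\nabla f(x_k)$ and $y=x^*$, exactly as in the proof of Theorem 4.3. This, combined with the expansion of $\|x_{k+1}-x^*\|_{B_k}^2$ and the bounds $m_1 I \preceq B_k \preceq m_2 I$ together with $\eta\leq m_1/L$, yields
\begin{equation*}
\|x_{k+1}-x^*\|_{B_k}^2 \leq \Bigl(1-\tfrac{\eta\mu}{m_2}\Bigr)\|x_k-x^*\|_{B_k}^2 + 2\eta[F^*-F(x_{k+1})] - 2\eta\Delta_k^\top(x_{k+1}-x^*).
\end{equation*}
(If the step-size constraint $\eta\le m_1/L$ is not explicit in the hypotheses, I would verify that it is implied by the stated condition $(\tfrac{2m_2 a}{m_1^2}+Ac)\eta \le 2\rho$ in the regimes of interest, or otherwise impose it as an additional reasonable hypothesis analogous to \eqref{eq: eta_choice}.) Next I will bound the cross term by splitting $x_{k+1}-x^* = (x_{k+1}-\bar{x}_{k+1}) + (\bar{x}_{k+1}-x^*)$ with $\bar{x}_{k+1}=\p_{\eta,\,h}^{B_k}(x_k-\eta H_k\nabla f(x_k))$, using the unbiasedness $\mathbb{E}_k[\Delta_k]=0$ on the second piece, and using nonexpansiveness \eqref{eq:nonexpansion} together with $\|H_k\|\le 1/m_1$ on the first piece to get $-2\eta\,\mathbb{E}_k[\Delta_k^\top(x_{k+1}-x^*)] \leq \tfrac{2m_2}{m_1^2}\eta^2\,\mathbb{E}_k[\|\Delta_k\|^2]$. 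At this point the first inequality of Assumption 4.4 replaces Lemma 4.5 and gives
\begin{equation*}
-2\eta\,\mathbb{E}_k[\Delta_k^\top(x_{k+1}-x^*)] \leq \tfrac{2m_2}{m_1^2}\eta^2\bigl[a(F(x_k)-F^*) + b\,\xi_k\bigr].
\end{equation*}

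Next I will add $A\eta^2\,\mathbb{E}_k[\xi_{k+1}]$ to both sides and invoke the second inequality of Assumption 4.4 to expand it as $(1-\gamma)\xi_k + c(F(x_k)-F^*)$. Regrouping gives
\begin{equation*}
\mathbb{E}_k[\|x_{k+1}-x^*\|_{B_k}^2 + A\eta^2\xi_{k+1} + 2\eta(F(x_{k+1})-F^*)] \leq \Bigl(1-\tfrac{\eta\mu}{m_2}\Bigr)\|x_k-x^*\|_{B_k}^2 + \Bigl(\tfrac{2m_2 b}{m_1^2 A}+1-\gamma\Bigr)A\eta^2\xi_k + \Bigl(\tfrac{2m_2 a}{m_1^2}+Ac\Bigr)\eta^2(F(x_k)-F^*).
\end{equation*}
The two max-arguments defining $\rho$ dominate the coefficients of $\|x_k-x^*\|_{B_k}^2$ and $A\eta^2\xi_k$, respectively, while the hypothesis $(\tfrac{2m_2 a}{m_1^2}+Ac)\eta \leq 2\rho$ ensures that the coefficient of $F(x_k)-F^*$ is at most $2\eta\rho$. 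Thus $\mathbb{E}_k[V_{k+1}] \leq \rho V_k$, mirroring \eqref{eq: phi_k}.

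Finally, the Hessian-update handling is verbatim the same as in Theorem 4.3: within any $r$ consecutive iterations the matrix $B_k$ changes at most once, which introduces a multiplicative factor $m_2/m_1$ from $\|x_{k+1}-x^*\|_{B_k}^2 \geq \tfrac{m_1}{m_2}\|x_{k+1}-x^*\|_{B_{k+1}}^2$. Iterating the one-step contraction in blocks of size $r$, bounding the leftover $k-r\lfloor k/r\rfloor < r$ initial iterations by $\rho$-contraction (noting $V_0=\phi_0$ since $B_0=I$), and using $(m_2/m_1)^{1/r}\rho<1$, yields the claimed geometric rate. The main obstacle I anticipate is purely bookkeeping: correctly matching the generic constants $(a,b,c,\gamma)$ to the coefficient positions that were occupied by $(4L,4L,p,p)$ in the L-SVRG argument, and verifying that the combined step-size condition $(\tfrac{2m_2 a}{m_1^2}+Ac)\eta \leq 2\rho$ is invoked precisely where $\rho\geq 1-\tfrac{\eta\mu}{m_2}$ previously supplied the analogous inequality \eqref{eq: eta_cond}.
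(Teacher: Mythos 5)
Your proposal follows essentially the same route as the paper's proof, which explicitly imitates the argument of Theorem \ref{thm: main} from \eqref{eq: key lem} through \eqref{eq: e_ip_delta}, substitutes the two inequalities of Assumption \ref{ass: vr} for Lemma \ref{lem: bv} and \eqref{eq: lsvrg_w}, and then reuses the block-of-$r$ contraction argument from \eqref{eq: thm1-1} onward. Your remark that the implicit requirement $\eta\leq m_1/L$ (needed to discard the $-\eta^2(m_1-L\eta)\|g_k\|^2$ term) is not stated among the hypotheses of the theorem is a fair observation about the paper itself, and your proposed handling of it is appropriate.
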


\begin{proof}
	Imitating the proof of Theorem \ref{thm: main} from \eqref{eq: key lem} to \eqref{eq: e_ip_delta} and applying the first inequality in \eqref{eq: ass_vr}, we have
	\begin{equation}\label{eq: x-expand}
		\begin{aligned}
			& \e_k\left[\left\|x_{k+1}-x^*\right\|_{B_k}^2\right] \\
			\leq & \left(1-\frac{\eta\mu}{m_2}\right)\left\|x_k-x^*\right\|_{B_k}^2-2 \eta \e_k\left[F\left(x_{k+1}\right)-F^*\right]+\frac{2 m_2}{m_1^2} \eta^2 \e_k\left[\left\|v_k - \nabla f(x_k)\right\|^2\right] \\
			\leq & \left(1-\frac{\eta\mu}{m_2}\right)\left\|x_k-x^*\right\|_{B_k}^2-2 \eta \e_k\left[F\left(x_{k+1}\right)-F^*\right]+\frac{2 m_2 a}{m_1^2} \eta^2\left(F\left(x_k\right)-F^*\right)+\frac{2 m_2 b}{m_1^2} \eta^2 \xi_k.
		\end{aligned}
	\end{equation}
	Adding $A\eta^2 \e_k[\xi_{k+1}]$ on both sides of \eqref{eq: x-expand} and applying the second inequality in \eqref{eq: ass_vr} leads to
	\begin{equation*}
		\begin{aligned}
			& \e_k \left[\| x_{k+1}-x^*\|_{B_k}^2+A \eta^2 \xi_{k+1}+2 \eta \left[F\left(x_{k+1}\right)-F^*\right]\right]\\
			\leq & \left(1-\frac{\eta\mu}{m_2}\right)\left\|x_k-x^*\right\|_{B_k}^2+\left(\frac{2 m_2 b}{m_1^2 A}+1-\gamma\right) A \eta^2 \xi_k+\left(\frac{2 m_2 a}{m_1^2}+A c\right) \eta^2\left[F\left(x_k\right)-F^*\right].
		\end{aligned}
	\end{equation*}
	Since $\rho=\max \left\{1-\frac{\eta\mu}{m_2}, \frac{2 m_2 b}{m_1^2 A}+1-\gamma\right\}\in (0,1)$, we have
	\begin{equation*}
		\begin{aligned}
			& \e_k\left[\left\|x_{k+1}-x^*\right\|_{B_k}^2+A \eta^2 \xi_{k+1}+2 \eta \left[F\left(x_{k+1}\right)-F^*\right]\right] \\
			\leq & \rho\left[\left\|x_k-x^*\right\|_{B_k}^2+A \eta^2 \xi_k+\frac{\eta^2}{\rho}\left(\frac{2 m_2 a}{m_1^2}+A c\right)\left(F\left(x_k\right)-F^*\right)\right].
		\end{aligned}
	\end{equation*}
	Since $\left(\frac{2 m_2 a}{m_1^2}+A c\right)\eta \leq 2 \rho$, it holds that $\frac{\eta^2}{\rho}\left(\frac{2 m_2 a}{m_1^2}+A c\right) \leq 2 \eta$.
	Therefore, we have
	\begin{equation*}
		\begin{aligned}
			& \mathbb{E}_k\left[\left\|x_{k+1}-x^*\right\|_{B_k}^2+A\eta^2 \xi_{k+1}+2 \eta\left(F\left(x_{k+1}\right)-F^*\right)\right] \\
			\leq & \rho\left[\left\|x_k-x^*\right\|_{B_k}^2+A\eta^2 \xi_k+2\eta\left(F\left(x_k\right)-F^*\right)\right].
		\end{aligned}
	\end{equation*}
	The remaining of the proof resembles the proof of Theorem \ref{thm: main} starting from \eqref{eq: thm1-1}.
\end{proof}

\begin{remark}
	Similar to Theorem \ref{thm: plsvrg}, we can establish convergence results for the proximal extensions of various VR methods satisfying Assumption \ref{ass: vr} by setting $m_1=m_2=1$ in Theorem \ref{thm: 2}.
\end{remark}

\section{An SSN method for (\ref{eq:subproblem}) and its numerical implementation}
\label{sec: ssn}
In this section, we provide a fast SSN method for efficiently solving the subproblem \eqref{eq:subproblem}.
The subproblem \eqref{eq:subproblem} is equivalent to the following minimization problem:
\begin{equation} \label{eq: sub_ori}
	\min_{x\in\r^d} \quad v_k^\top\left(x-x_k\right)+\frac{1}{2 \eta_k}\left(x-x_k\right)^\top B_k (x-x_k)+h(x).
\end{equation}
To simplify the formulation of \eqref{eq: sub_ori}, we introduce
\begin{equation*}
	g:= \eta_k v_k - B_k x_k, \quad\theta(\cdot) := \eta_k h(\cdot).
\end{equation*}
Then, we omit the subscript $k$ in $B_k$ and reformulate \eqref{eq: sub_ori} as
\begin{equation} \label{eq: sub_sim}
	\min_{x \in \r^d} \quad g^{\top} x + \frac{1}{2} x^{\top} B x + \theta(x) .
\end{equation}
Let $\{(s_i, y_i)\}_{i=1}^{m}$ be the correction pairs that generate $B$, where $m$ is the current memory size.
For simplicity and with a slight abuse of notation, we denote by $B_i$ the $i$-th matrix in the generation of $B$ throughout this section. That is, given $B_{i-1}$, $B_i$ is updated via
\begin{equation*}
	B_i = B_{i-1} - \frac{s_i y_i^\top}{y_i^\top s_i} + \frac{y_i y_i^\top}{y_i^\top s_i},
\end{equation*} 
and we have $B = B_m$.

In Section \ref{sec: ssn-discription}, we present our SSN method for solving \eqref{eq: sub_sim}, and we provide an efficient numerical implementation for the proposed SSN method in Section \ref{sec: ssn-implement}.

\subsection{An SSN method for (\ref{eq: sub_sim})}\label{sec: ssn-discription}
It is known that SSN methods are typically used in conjunction with a suitable step size search strategy.
However, the nonsmoothness of the objective function in \eqref{eq: sub_sim} prevents the direct application of commonly used line search conditions. To address this issue, we first transform problem \eqref{eq: sub_sim} into an unconstrained smooth problem.
For this purpose, we first rewrite \eqref{eq: sub_sim} as
\begin{equation} \label{model_v1}
	\begin{aligned}
		\min_{x, z \in \r^d} \quad& g^{\top} x+ \frac{1}{2} x^{\top}(B-\alpha I) x+\frac{\alpha}{2} z^{\top} z+ \theta(z), \\
		\text { s.t. }\; & x=z,
	\end{aligned}
\end{equation}
where $\alpha>0$.
Note that if the matrix $B_\alpha:=B-\alpha I$ is positive definite, then the objective function of \eqref{model_v1} is convex.
The proposition below offers guidance for selecting $\alpha$ to ensure the positive definiteness of $B_\alpha$.

\begin{proposition} \label{prop: choice_alpha}
	For any $\alpha$ satisfying $0 \le \alpha<\bar{\alpha}:= \frac{1}{\frac{1}{\sigma_0}+\sum_{i=1}^{m} \frac{s_{i}^\top s_i}{y_{i}^\top s_i}}$, where $\sigma_0$ is defined in \eqref{eq: compact_lbfgs}, the matrix $B_\alpha=B-\alpha I$ is positive definite.
\end{proposition}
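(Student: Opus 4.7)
The plan is to transfer positive definiteness of $B_\alpha=B-\alpha I$ to an operator-norm bound on $H:=B^{-1}$. Indeed, $B-\alpha I \succ 0$ is equivalent to $\lambda_{\min}(B)>\alpha$, i.e.\ $\lambda_{\max}(H)<1/\alpha$, while the condition $\alpha<\bar\alpha$ rewrites as $1/\alpha > 1/\sigma_0 + \sum_{i=1}^{m} s_i^\top s_i/(y_i^\top s_i)$. So it suffices to prove
\begin{equation*}
    \|H\|_{\mathrm{op}} \;\le\; \frac{1}{\sigma_0} + \sum_{i=1}^{m}\frac{s_i^\top s_i}{y_i^\top s_i} \;=\; \frac{1}{\bar\alpha},
\end{equation*}
and then combine this with $\alpha<\bar\alpha$.

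To obtain the bound, I would unroll the inverse L-BFGS recursion associated with the compact form \eqref{eq: compact_lbfgs} via Sherman--Morrison--Woodbury. Starting from $H_0=\sigma_0^{-1} I$, the updates read
\begin{equation*}
    H_i \;=\; V_i^\top H_{i-1} V_i + \rho_i\, s_i s_i^\top, \qquad V_i := I-\rho_i\, y_i s_i^\top, \qquad \rho_i := 1/(y_i^\top s_i),
\end{equation*}
and I would try to prove by induction on $i$ that $H_i \preceq c_i I$ with $c_i := 1/\sigma_0 + \sum_{j\le i}\|s_j\|^2/(y_j^\top s_j)$. The rank-one correction $\rho_i s_i s_i^\top \preceq \rho_i \|s_i\|^2 I$ already supplies the new term $c_i-c_{i-1}=\|s_i\|^2/(y_i^\top s_i)$, so the inductive step reduces to the comparison $V_i^\top H_{i-1} V_i \preceq c_{i-1} I$, i.e.\ $\|H_{i-1}^{1/2} V_i u\|^2 \le c_{i-1}\|u\|^2$ for every $u$.

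The main obstacle is precisely this last comparison. Because $V_i$ is rank-deficient and non-symmetric, with top singular value $\|y_i\|\|s_i\|/(y_i^\top s_i)\ge 1$ by Cauchy--Schwarz, a submultiplicative bound $V_i^\top H_{i-1} V_i \preceq \|V_i\|^2 H_{i-1}$ is inadequate. My plan to overcome this is to exploit the defining identity $V_i y_i = 0$: any $u$ splits as $u=\theta y_i + u^\perp$ with $u^\perp \perp y_i$, and $V_i u = V_i u^\perp$, so the analysis reduces to the hyperplane $\{y_i\}^\perp$. On this hyperplane I would expand
\begin{equation*}
    (V_i u^\perp)^\top H_{i-1}(V_i u^\perp) = (u^\perp)^\top H_{i-1} u^\perp - 2\rho_i\,(s_i^\top u^\perp)(y_i^\top H_{i-1} u^\perp) + \rho_i^2\,(s_i^\top u^\perp)^2\,(y_i^\top H_{i-1} y_i),
\end{equation*}
and use the secant-type identities propagated through the preceding L-BFGS updates to show the cross term absorbs the positive quadratic term, closing the induction. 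A complementary and possibly cleaner route is to attack \eqref{eq: compact_lbfgs} directly by a Haynsworth inertia argument on the bordered matrix with diagonal blocks $(\sigma_0-\alpha)I$ and $M$ and off-diagonal block $[\sigma_0 S, Y]$; this converts positive definiteness of $B_\alpha$ into a signature statement about a $2m\times 2m$ Schur complement determined only by $\sigma_0$, $\alpha$, $S^\top S$, $L$, and $D$, which one then verifies using only $\alpha<\bar\alpha$. Once $\|H\|_{\mathrm{op}}\le 1/\bar\alpha$ is established by either route, $\lambda_{\min}(B) \ge \bar\alpha > \alpha$ is immediate and the proposition follows.
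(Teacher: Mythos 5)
Your overall route is the same as the paper's: reduce the claim to $\lambda_{\max}(H_m)\le 1/\bar\alpha$ and run an induction $H_i\preceq c_i I$ with $c_i=1/\sigma_0+\sum_{j\le i}\|s_j\|^2/(y_j^\top s_j)$ through the recursion $H_i=V_i^\top H_{i-1}V_i+\rho_i s_is_i^\top$. You have also correctly isolated the crux, namely the comparison $V_i^\top H_{i-1}V_i\preceq c_{i-1}I$, and correctly observed that $\|V_i\|=\|y_i\|\,\|s_i\|/(y_i^\top s_i)\ge 1$ rules out a submultiplicative bound. The problem is that the device you propose to rescue this step cannot work, because the step is genuinely false, already at $i=1$: there $H_0=\sigma_0^{-1}I$, so the required inequality is $\sigma_0^{-1}V_1^\top V_1\preceq\sigma_0^{-1}I$, i.e.\ $\|V_1\|\le 1$, which fails whenever $s_1\not\parallel y_1$. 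Splitting $u=\theta y_1+u^\perp$ buys nothing: since $V_1y_1=0$, the maximizer of $\|V_1u\|/\|u\|$ already lies in $\{y_1\}^\perp$ (the $y_1$-component of $u$ only inflates the denominator), and with $H_0$ a multiple of the identity there is no cross term left to absorb the excess.

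The failure is not confined to your attempt: the inductive invariant, and in fact the proposition itself, is false as stated. Take $d=2$, $m=1$, $s_1=(1,0)^\top$, $y_1=(1/10,\,1)^\top$, which is realizable under Assumption \ref{ass: fh} since $y_1=As_1$ with $A=\begin{pmatrix}1/10 & 1\\ 1 & 20\end{pmatrix}\succ 0$. Then $y_1^\top s_1=1/10>0$, $\sigma_0=101/10$, and \eqref{eq: compact_lbfgs} yields $B=\begin{pmatrix}1/10 & 1\\ 1 & 201/10\end{pmatrix}$, whose smallest eigenvalue is $(20.2-\sqrt{404})/2\approx 0.0501$, whereas $\bar\alpha=101/1020\approx 0.0990$; so for $\alpha=0.08<\bar\alpha$ the matrix $B_\alpha$ is indefinite. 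The paper's own proof runs exactly your induction and forces the critical step through by asserting that $I-y_is_i^\top/(y_i^\top s_i)$ has singular values $0,1,\dots,1$; that is false for an oblique projector, whose largest singular value is $\|y_i\|\,\|s_i\|/(y_i^\top s_i)>1$ --- precisely the quantity you flagged. So your instinct about where the difficulty lies was right, and the honest conclusion is that $\bar\alpha$ is not a valid lower bound on $\lambda_{\min}(B)$: a correct threshold must account for the factors $\|V_i\|^2=\|s_i\|^2\|y_i\|^2/(y_i^\top s_i)^2$ multiplicatively (e.g.\ via $\lambda_{\max}(H_i)\le\|V_i\|^2\lambda_{\max}(H_{i-1})+\|s_i\|^2/(y_i^\top s_i)$) rather than additively as in $\bar\alpha$. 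Positive definiteness of $B_\alpha$ for all sufficiently small $\alpha>0$ does of course still hold, by Lemma \ref{lemma: bounded hessian}.
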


\begin{proof}
	From \cite{nocedal1999numerical}, the inverse L-BFGS matrix $H_{i} = B_{i}^{-1}$ satisfies for any $i \in [m]$,
	$$
	H_{i}=(I-\frac{s_i y_i^\top}{y_i^\top s_i})H_{i-1}(I-\frac{y_i s_i^\top}{y_i^\top s_i})+\frac{s_i s_i^\top}{y_i^\top s_i}.
	$$
	Note that the largest eigenvalue of $H_i$ denoted by $\sigma_{\max} (H_{i})$ satisfies
		\begin{equation}\label{eq: alpha-1}
		\begin{aligned}
			\sigma_{\max} (H_{i}) &=\max_{\|x\|=1}x^\top H_{i} x\\
			&\le \max_{\|x\|=1}x^\top (I-\frac{s_i y_i^\top}{y_i^\top s_i})H_{i-1}(I-\frac{y_i s_i^\top}{y_i^\top s_i}) x+\max_{\|x\|=1}x^\top \frac{s_i s_i^\top}{y_i^\top s_i}  x\\
			&=\sigma_{\max} ((I-\frac{s_i y_i^\top}{y_i^\top s_i})H_{i-1}(I-\frac{y_i s_i^\top}{y_i^\top s_i}) )+\frac{s_i^\top s_i}{y_i^\top s_i}.\\
			\end{aligned}
		\end{equation}
	On the other hand, there exist orthogonal matrices $U_i,V_i\in \mathbb{R}^{d\times d}$ and diagonal matrix $\Sigma=\operatorname{diag}(0, 1, \dots, 1)\in \mathbb{R}^{d\times d}$ such that
		\begin{equation*}
	I-\frac{y_i s_i^\top}{y_i^\top s_i}=U_i \Sigma V_i.
		\end{equation*}
	Thus it holds that
	\begin{equation}\label{eq: alpha-2}
		\begin{aligned}
			\sigma_{\max} ((I-\frac{s_i y_i^\top}{y_i^\top s_i})H_{i-1}(I-\frac{y_i s_i^\top}{y_i^\top s_i}) )&=\sigma_{\max} (V_i^\top \Sigma U_i^\top H_{i-1}U_i \Sigma V_i)=\sigma_{\max} ( \Sigma U_i^\top H_{i-1}U_i \Sigma )\\
			&\le \sigma_{\max} (U_i^\top H_{i-1}U_i)=\sigma_{\max} (H_{i-1}),
			\end{aligned}
		\end{equation}
	where the inequality comes from the interlacing property for eigenvalues of Hermitian matrices (see \cite{golub2013matrix}) by noting that the nonzero $(d-1)\times (d-1)$ principle submatrix of $\Sigma U_i^\top H_{i-1}U_i \Sigma$ is also a principle submatrix of $U_i^\top H_{i-1}U_i$.
	Then from \eqref{eq: alpha-1} and \eqref{eq: alpha-2} we have
	\begin{equation}\label{eq: alpha-3}
	\sigma_{\max} (H_{i})\le  \sigma_{\max} ( H_{i-1}  )+\frac{s_i^\top s_i}{y_i^\top s_i}.
		\end{equation}
		Summing over \eqref{eq: alpha-3} for $i=1,\dots,m$ gives
		\begin{equation*}
			\sigma_{\max}(H_{m}) \leq \sigma_{\max}(H_{0})+\sum_{i=1}^{m} \frac{s_{i}^\top s_i}{y_{i}^\top s_i}=\frac{1}{\sigma_{0}}+\sum_{i=1}^{m} \frac{s_{i}^\top s_i}{y_{i}^\top s_i}.
		\end{equation*}
		Denote by $\sigma_{\min} (B_{m})$ the smallest eigenvalue of $B_{m}$, then we have
		\begin{equation*}
			\sigma_{\min} (B_{m}) =\frac{1}{\sigma_{\max} (H_{m})} \ge \frac{1}{\frac{1}{\sigma_0} +\sum_{i=1}^{m} \frac{s_{i}^\top s_i}{y_{i}^\top s_i}} := \bar{\alpha},
		\end{equation*}
		which together with $B = B_m$ implies that $\sigma_{\min} (B_{\alpha})> 0$, hence completes the proof.
	\end{proof}


To proceed, let's consider the dual problem of \eqref{model_v1}, which reads as
\begin{equation} \label{eq: DP_ori}
	\max_{\lambda \in \mathbb{R}^d} J^*(\lambda),
\end{equation}
where $J^*(\lambda)= \min_{x,z\in\r^d} L(x, z;\lambda)$ with
$L(x, z;\lambda)= \frac{1}{2}x^\top B_{\alpha} x+g^\top x+ \frac{\alpha}{2} z^\top z+\theta(z)-\lambda^\top(x-z)$.
Since problem \eqref{model_v1} is convex with $0\le \alpha<\bar{\alpha}$, the strong duality holds.
For convenience, we define $\Lambda(\lambda):= - J^*(\lambda)$ and consider the following equivalent formulation of \eqref{eq: DP_ori}:
\begin{equation}\label{eq: DP}
	\min_{\lambda \in \mathbb{R}^d} \Lambda(\lambda).
\end{equation}

Next, we show that the objective function $\Lambda$ in (\ref{eq: DP}) is differentiable if $0<\alpha<\bar{\alpha}$.

\begin{proposition} \label{prop: gradient_Lambda}
	If $0<\alpha<\bar{\alpha}$, then the objective function $\Lambda$ in \eqref{eq: DP} is differentiable and its gradient is given by
	\begin{equation*}
		\nabla \Lambda(\lambda)=  B_\alpha^{-1}(\lambda-g)-\p_{\frac{1}{\alpha},\;\theta}(-\frac{\lambda}{\alpha}),~\forall \lambda\in\r^d.
	\end{equation*}
\end{proposition}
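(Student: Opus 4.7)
My plan is to obtain a closed-form expression for $\Lambda(\lambda)$ by carrying out the inner minimization in $L(x,z;\lambda)$ explicitly, and then to read off the gradient from that expression using the Moreau-envelope identity \eqref{eq: diff_M}.

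First I would observe that the Lagrangian $L(x,z;\lambda)$ separates into an $x$-part and a $z$-part, so the inner infimum decouples:
\begin{equation*}
J^*(\lambda) = \min_{x\in\r^d}\left\{\tfrac{1}{2}x^\top B_\alpha x + (g-\lambda)^\top x\right\} + \min_{z\in\r^d}\left\{\tfrac{\alpha}{2}\|z\|^2 + \lambda^\top z + \theta(z)\right\}.
\end{equation*}
Since $0<\alpha<\bar{\alpha}$, Proposition \ref{prop: choice_alpha} ensures that $B_\alpha$ is positive definite, hence invertible. The $x$-minimization is then an unconstrained strongly convex quadratic whose minimizer is $x^*(\lambda)=B_\alpha^{-1}(\lambda-g)$, yielding the value $-\tfrac{1}{2}(\lambda-g)^\top B_\alpha^{-1}(\lambda-g)$. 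For the $z$-part, completing the square gives
\begin{equation*}
\tfrac{\alpha}{2}\|z\|^2 + \lambda^\top z + \theta(z) = \tfrac{1}{2(1/\alpha)}\bigl\|z-(-\lambda/\alpha)\bigr\|^2 + \theta(z) - \tfrac{1}{2\alpha}\|\lambda\|^2,
\end{equation*}
so its minimum is $\mathcal{M}_{1/\alpha,\;\theta}(-\lambda/\alpha) - \tfrac{1}{2\alpha}\|\lambda\|^2$, attained at $\p_{1/\alpha,\;\theta}(-\lambda/\alpha)$.

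Combining these two pieces and using $\Lambda(\lambda)=-J^*(\lambda)$, I would arrive at
\begin{equation*}
\Lambda(\lambda) = \tfrac{1}{2}(\lambda-g)^\top B_\alpha^{-1}(\lambda-g) + \tfrac{1}{2\alpha}\|\lambda\|^2 - \mathcal{M}_{1/\alpha,\;\theta}(-\lambda/\alpha).
\end{equation*}
The first two terms are smooth by inspection, while the third is smooth because the Moreau envelope $\mathcal{M}_{1/\alpha,\;\theta}$ is everywhere differentiable (as recalled right before \eqref{eq: diff_M}). Hence $\Lambda$ is differentiable on $\r^d$.

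To finish, I would apply the chain rule together with \eqref{eq: diff_M}. Writing $y(\lambda)=-\lambda/\alpha$ and using $\nabla\mathcal{M}_{1/\alpha,\;\theta}(y)=\alpha\bigl(y-\p_{1/\alpha,\;\theta}(y)\bigr)$, a short computation gives
\begin{equation*}
\nabla_\lambda\bigl[\mathcal{M}_{1/\alpha,\;\theta}(-\lambda/\alpha)\bigr] = -\tfrac{1}{\alpha}\,\alpha\bigl(-\tfrac{\lambda}{\alpha}-\p_{1/\alpha,\;\theta}(-\tfrac{\lambda}{\alpha})\bigr) = \tfrac{\lambda}{\alpha} + \p_{1/\alpha,\;\theta}(-\lambda/\alpha).
\end{equation*}
Substituting this into the gradient of $\Lambda$ yields the cancellation of the $\lambda/\alpha$ terms and delivers the claimed formula $\nabla\Lambda(\lambda)=B_\alpha^{-1}(\lambda-g)-\p_{1/\alpha,\;\theta}(-\lambda/\alpha)$. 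The only place one has to be careful is the sign bookkeeping in the chain rule and the correct instantiation $\eta=1/\alpha$ in \eqref{eq: diff_M}; aside from that, the argument is routine once the explicit formula for $\Lambda$ is in hand.
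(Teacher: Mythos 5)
Your proposal is correct and follows essentially the same route as the paper: decouple the Lagrangian minimization into the quadratic $x$-part and the $z$-part, recognize the latter (after completing the square) as a Moreau envelope, and differentiate via \eqref{eq: diff_M}; the paper merely packages the two pieces as auxiliary functions $\Psi$ and $\Theta$ rather than writing the closed form of $\Lambda$ in one line. Your sign bookkeeping in the chain rule and the instantiation $\eta=1/\alpha$ are both right, so no changes are needed.
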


\begin{proof}
	To simplify the notation, we introduce two auxiliary functions $\Psi(\cdot)$ and $\Theta(\cdot)$ defined by
	\begin{equation*}
		\begin{aligned}
			\Psi(\lambda) &:= -\min_{x\in\r^d}\;\frac{1}{2}x^\top  B_{\alpha} x+g^\top x-\lambda^\top x=\frac{1}{2}(\lambda-g)^\top  B_\alpha^{-1}(\lambda-g),\\
			\Theta(\lambda) &:=-\min_{z\in\r^d} \; \frac{\alpha}{2}\|z\|^2+\theta (z)+\lambda^\top z.
		\end{aligned}
	\end{equation*}
	Then, it is easy to see that
	\begin{equation*}
		\Lambda(\lambda)=\Psi(\lambda)+\Theta(\lambda).
	\end{equation*}
	Therefore, it suffices to prove that both $\Psi$ and $\Theta$ are differentiable.
	
	To this end, first note that $\Psi$ is differentiable with its gradient given by
	\begin{equation}\label{diff_psi}
		\nabla \Psi(\lambda)=  B_\alpha^{-1}(\lambda-g),\;\forall \lambda\in\r^d.
	\end{equation}
	To show that $\Theta$ is also differentiable, notice that
	\begin{equation*}
		\Theta(\lambda)=-\min_{z\in\r^d} \left\{\theta(z)+\frac{\alpha}{2}\|z+\frac{\lambda}{\alpha}\|^2-\frac{1}{2\alpha}\|\lambda\|^2\right\}=-\mathcal{M}_{\frac{1}{\alpha},\;\theta}(-\frac{\lambda}{\alpha})+\frac{1}{2\alpha}\|\lambda\|^2.
	\end{equation*}
	Hence, from \eqref{eq: diff_M} we know that $\Theta$ is differentiable, and
	\begin{equation}\label{diff^theta}
		\nabla \Theta(\lambda)=\frac{1}{\alpha}\nabla\mathcal{M}_{\frac{1}{\alpha},\;\theta}(-\frac{\lambda}{\alpha})+\frac{\lambda}{\alpha}=-\p_{\frac{1}{\alpha},\;\theta}(-\frac{\lambda}{\alpha}).
	\end{equation}
	Consequently, $\Lambda$ is differentiable, and
	combining (\ref{diff_psi}) with (\ref{diff^theta}) gives
	\begin{equation*}
		\nabla \Lambda(\lambda)=  B_\alpha^{-1}(\lambda-g)-\p_{\frac{1}{\alpha},\;\theta}(-\frac{\lambda}{\alpha}),
	\end{equation*}
	which completes the proof.
\end{proof}

As a consequence of Proposition \ref{prop: gradient_Lambda}, if we let $\mathcal{D}_{\theta}(\cdot)$ be the generalized Jacobian \cite{clarke1990optimization} of $\p_\theta(\cdot)$, then the matrix
\begin{equation*}
	B_\alpha^{-1}+\frac{1}{\alpha}\mathcal{D}_{\frac{ \theta}{\alpha}}(-\frac{\lambda}{\alpha})
\end{equation*}
is the generalized Jacobian of $\nabla \Lambda $ at $\lambda$. Now applying an SSN method to \eqref{eq: DP} readily yields the following iterative scheme
\begin{equation*}
\lambda_{j+1}=\lambda_j+\rho_jd_j,
\end{equation*}
where $\rho_j>0$ is a step size and $d_j:=-( B_\alpha^{-1}+\mathcal{D}_j)^{-1}\nabla \Lambda(\lambda_j)$ with $\mathcal{D}_j:=\frac{1}{\alpha}\mathcal{D}_{\frac{ \theta}{\alpha}}(-\frac{\lambda_j}{\alpha})$. 

To determine the step size $\rho_j$, one can apply a backtracking scheme until the Wolfe condition \cite{wolfe1969convergence} is met.
Alternatively, we propose an exact line search by solving
 $$\rho_j=\arg\min_{\rho\ge 0} R_j(\rho):=\Lambda(\lambda_j+\rho d_j).$$
Since $R_j$ is convex and differentiable, various methods can be applied to minimize it.
In particular, we advocate applying an SSN method due to its fast convergence.
For this purpose, a straightforward calculation gives
\begin{equation*}
	R^{'}_j(\rho)=d_j^\top \nabla \Lambda(\lambda_j+\rho d_j)=d_j^\top B_\alpha^{-1}(\lambda_j-g)+\rho d_j^\top B_\alpha^{-1} d_j - d_j^\top \p_{\frac{1}{\alpha},\;\theta}(- \frac{\lambda_j+\rho d_j}{\alpha}),
\end{equation*}
which implies that
\begin{equation*}
	d_j^\top B_{\alpha}^{-1} d_j +\frac{1}{\alpha} d_j^\top  \mathcal{D}_{\frac{ \theta}{\alpha}}(-\frac{\lambda_j+\rho d_j}{\alpha})d_j
\end{equation*}
is an element of the generalized Jacobian of $R^{'}_j(\rho)$.
Therefore, an SSN method for minimizing $R_j(\cdot)$ proceeds by updating
\begin{equation}\label{SSN_LS}
	\rho_{i+1,j}=\rho_{i,j}-\frac{d_j^\top B_\alpha^{-1}(\lambda_j-g)+\rho_{i,j} d_j^\top B_{\alpha}^{-1} d_j - d_j^\top \p_{\frac{1}{\alpha},\;\theta}(- \frac{\lambda_j+\rho_{i,j} d_j}{\alpha})}{d_j^\top B_{\alpha}^{-1} d_j +\frac{1}{\alpha} d_j^\top  \mathcal{D}_{\frac{ \theta}{\alpha}}(-\frac{\lambda_j+\rho_{i,j}  d_j}{\alpha})d_j}.
\end{equation}
Note that $\rho_{0,j}=1$ is a natural choice of the initial guess, and we set $\rho_j$ as the output of \eqref{SSN_LS}.

With the above discussions, we propose an SSN method for solving problem \eqref{eq: DP} and list it in Algorithm \ref{algo: ssn}.

\begin{algorithm}
	\caption{An SSN method for solving problem \eqref{eq: DP}.}
	\label{algo: ssn}
	\begin{algorithmic}
		\REQUIRE initial point $\lambda_0\in \r^d$.
		\FOR{$j = 0, 1, \dots$}
		\STATE Compute $x_{j}= B_\alpha^{-1}(\lambda_j-g)$ and $z_{j}=\p_{\frac{1}{\alpha},\;\theta}(- \frac{\lambda_j}{\alpha})$
		\STATE Set $\mathcal{D}_j=\frac{1}{\alpha}\mathcal{D}_{\frac{ \theta}{\alpha}}(-\frac{\lambda_j}{\alpha})$ and $\nabla \Lambda(\lambda_j)=x_{j}-z_{j}$
		\STATE Compute $d_j=-( B_\alpha^{-1}+\mathcal{D}_j)^{-1}\nabla \Lambda(\lambda_j)$
		\STATE Update $\lambda_{j+1}=\lambda_j+\rho_j d_j$ with the step size $\rho_j$ obtained by (\ref{SSN_LS}).
		\ENDFOR
	\end{algorithmic}
\end{algorithm}

\begin{remark}
	From the equivalence between problems \eqref{eq: sub_sim} and \eqref{model_v1} and the fact that strong duality holds between \eqref{model_v1} and its dual \eqref{eq: DP}, we have that $x^*= B_\alpha^{-1}(\lambda^*-g)$ is a global minimizer of \eqref{eq: sub_sim} if $\lambda^*$ is a global minimizer of \eqref{eq: DP}.
	In other words, Algorithm \ref{algo: ssn} is an SSN method from a dual perspective to solve \eqref{eq: sub_sim}.
\end{remark}

\subsection{A fast numerical implementation of Algorithm \ref{algo: ssn}}
\label{sec: ssn-implement}
In this subsection, we explore an efficient numerical implementation of Algorithm \ref{algo: ssn}. Note that the computational cost of Algorithm \ref{algo: ssn} primarily involves computing $x_j$ and $d_j$, as well as the computations involved in \eqref{SSN_LS}. As a result, the key challenge in implementing Algorithm \ref{algo: ssn} is:
\textit{given a vector $z\in \mathbb{R}^d$, how to efficiently compute the matrix-vector products $B_\alpha^{-1}z$ and $( B_\alpha^{-1}+\mathcal{D}_j)^{-1}z$.}

Recall that at most $l\ll d$ correction pairs $\left(s_j, y_j\right), j=1,\ldots, m\leq l$, are stored.
From \eqref{eq: compact_lbfgs}, we can represent $B_\alpha$ as a scaled difference between a diagonal matrix and a matrix with rank $2m\le 2l\ll d$, i.e.,
\begin{equation}\label{B_alpha}
	B_\alpha=(\sigma_{0} -\alpha)(I - UJ^{-1}U^\top),
\end{equation}
where
\begin{equation*}
	U:=\begin{bmatrix}\sigma_{0} S & Y\end{bmatrix}\in \mathbb{R}^{d\times 2m }\text{~and~}J := \left(\sigma_{0} -\alpha\right)\begin{bmatrix} \sigma_{0} S^\top  S & L\\ L^\top & -D\end{bmatrix}\in \mathbb{R}^{2m \times 2m }.
\end{equation*}
Leveraging this specific structure, we can convert the complex $d$-dimensional linear system in Algorithm \ref{algo: ssn} into an easily solvable
$2m$-dimensional linear system.

For simplicity, we assume that $ \theta(x) = \|x\|_1$ or $ \theta(x)=I_C(x)$, with $I_C$ being the indicator function of a convex closed set $C\subset \r^d$.
In both cases, the generalized Jacobian $\mathcal{D}_{\theta}(\cdot)$ of $\p_{\theta}(\cdot)$ (see \cite{clarke1990optimization}) can be represented as a diagonal matrix with entries being either $0$ or $1$. This distinct structure of $\mathcal{D}_{\theta}(\cdot)$ implies that many of the multiplication operations employed in computing
 $( B_\alpha^{-1}+\mathcal{D}_j)^{-1}z$ are the same as those used in computing $B_\alpha^{-1}z$.
Then as shown below, we can reduce the redundant multiplication operations by computing and storing some auxiliary matrices. This approach significantly reduces the computational load of the entire algorithm.
We summarize these conditions in Assumption \ref{ass: assumption2}. It is worth mentioning that the techniques developed subsequently can be easily generalized to cases where $\mathcal{D}_{\theta}$ and the basic matrix $B_{0}$ are general diagonal matrices.
\begin{assumption} \label{ass: assumption2}
	For any $u\in\r^d$, the generalized Jacobian $\mathcal{D}_{\theta}(u)$ is a diagonal matrix with diagonal entries being either $0$ or $1$.
\end{assumption}

\subsubsection{Computation of auxiliary matrices}

In addition to the matrices $S=[s_{1}, \dots, s_{m}]$ and $Y=[y_{1}, \dots, y_{m}]$, we introduce some supplementary auxiliary matrices to track all multiplicative operations required for computing $S^\top Y$, $S^\top S$, and $Y^\top Y$. To simplify our notation, for any vectors $u,v\in \mathbb{R}^m$ we define the operation:
\begin{equation*}
	u\otimes v:=u v^\top=\begin{bmatrix}u_{1 }v_{1 }&u_{1 }v_{2 } &\hdots &u_{1 }v_{m } \\
	u_{2 }v_{1 }&u_{2 }v_{2 } &\hdots &u_{2 }v_{m } \\
	\vdots &\vdots &\ddots  &\vdots\\
	u_{m }v_{1 }&u_{m  }v_{2 } &\hdots &u_{m }v_{m } \\
	   \end{bmatrix}.
\end{equation*}
For $i\in [d]$, let $\bar{s}_i, \bar{y}_i \in \r^m$ represent the transpose of the $i$-th row of $S$ and $Y$, respectively, such that:
\begin{equation*}
	S=\begin{bmatrix}\bar{s}_1^\top\\\bar{s}_2^\top\\ \vdots\\ \bar{s}_d^\top\end{bmatrix}\text{~and~} Y=\begin{bmatrix}\bar{y}_1^\top\\\bar{y}_2^\top\\ \vdots\\ \bar{y}_d^\top\end{bmatrix}.
\end{equation*}
We compute and store the following auxiliary matrices:
\begin{equation*}
	\bar{s}_i \otimes \bar{y}_i,~\bar{s}_i \otimes \bar{s}_i ,~\text{and}~\bar{y}_i \otimes \bar{y}_i\in \mathbb{R}^{m\times m} ,~\text{for}~i=1, 2, \dots, d.
\end{equation*}
After computing a new correction pair $(s^t, y^t)$, these auxiliary matrices do not need to be completely recomputed. In fact, for $i\in [d]$, the updates only include computing the entries:
\begin{equation}\label{entries}
	\left\{
	\begin{aligned}
		&s_{j,i}~y_{ i}^t \\
		&s_{i}^t~y_{j, i} \\
		&s_{j, i}~s_{i}^t \\
		&y_{j, i}~y_{i}^t \\
		\end{aligned}\right. \text{~for~}j\in \left\{
			\begin{aligned}
				&[m]\text{~if~} t\leq l\\
				&\{2, \dots, m\}\text{~otherwise}  \\
			\end{aligned}\right.\text{~and~}	\left\{
	\begin{aligned}
		&s_{i}^t~y_{i}^t,\\
		&s_{i}^t~s_{i}^t,  \\
		&y_{i}^t~y_{i}^t.  \\
	\end{aligned}\right.
\end{equation}
When $t\le l$, we update $S$ and $Y$ by adding new columns $s_{m+1}=s^t$ and $y_{m+1}=y^t$ respectively, and update $\bar{s}_i \otimes \bar{y}_i,~\bar{s}_i \otimes \bar{s}_i,~\text{and}~\bar{y}_i \otimes \bar{y}_i$ by appending a new column and row with entries calculated in \eqref{entries} on the right and bottom correspondingly.
When $t>l$, the matrices $S$, $Y$ and the auxiliary matrices $\bar{s}_i \otimes \bar{y}_i,~\bar{s}_i \otimes \bar{s}_i,~\text{and}~\bar{y}_i \otimes \bar{y}_i$ are updated by deleting and appending certain rows and columns with $s^t$, $y^t$, and those computed in \eqref{entries}.
In total, computing and storing the auxiliary matrices $\bar{s}_i \otimes \bar{y}_i,~\bar{s}_i \otimes \bar{s}_i,~\text{and}~\bar{y}_i \otimes \bar{y}_i$ for $i\in [d]$ require approximately $4md$ multiplication operations and $2m^2d$ units of storage.

\subsubsection{Computation of \texorpdfstring{$B_\alpha^{-1} z$}{}}\label{sec: two}
Recall \eqref{B_alpha}, we have
 \begin{equation*}
	 B_{\alpha}^{-1}z=\frac{1}{\sigma_{0} -\alpha}[ z-U (U^\top U-J)^{-1}U^\top z].
\end{equation*}
Therefore, $B_\alpha^{-1} z$ can be easily computed once we obtain the explicit expression of $(U^\top U-J)^{-1}$.
First, it holds that
$$
J = \left(\sigma_{0} -\alpha\right)\begin{bmatrix} \sigma_{0} S^\top  S & L\\ L^\top & -D\end{bmatrix}\in \mathbb{R}^{2m \times 2m }\text{~and~} U^\top U= \begin{bmatrix} \sigma_{0}^2 S^\top  S & \sigma_{0} S^\top Y \\  \sigma_{0} Y^\top  S & Y^\top  Y \end{bmatrix}\in \mathbb{R}^{2m \times 2m }.
$$
Additionally, note that
\begin{equation*}
S^\top Y=\sum_{i=1}^d \bar{s}_i \otimes \bar{y}_i,~S^\top S=\sum_{i=1}^d \bar{s}_i \otimes \bar{s}_i,~Y^\top Y=\sum_{i=1}^d \bar{y}_i \otimes \bar{y}_i,
\end{equation*}
which only involves assembling the auxiliary matrices  $ \bar{s}_i \otimes \bar{y}_i$, $\bar{s}_i \otimes \bar{s}_i$, and $\bar{y}_i \otimes \bar{y}_i$, $i=1,\ldots, d$.
Therefore, the matrices $S^\top Y$, $S^\top S$, and $Y^\top Y$ can be very efficiently computed since it does not require multiplications but only approximately $3 m^2 d$ additions.
Moreover, recall that $L$ is the lower triangular part of $S^\top Y$ and $D$ is the diagonal matrix of $S^\top Y$, hence obtaining the expressions of $L$ and $D$ is straightforward.
Therefore, the explicit expression of $U^\top U-J$ can be obtained correspondingly, and it costs $O(m^3)$ operations to compute its inverse, which is negligible since $m\ll d$. Then the remaining computations for $B_{\alpha}^{-1}z$ mainly consist of matrix-vector products, which cost about $4md$ multiplications and additions.

\subsubsection{Computation of \texorpdfstring{$( B_{\alpha}^{-1}+\mathcal{D}_j)^{-1}z$}{}}\label{sec: three}
From the compact formulation \eqref{eq: compact_lbfgs}, we have that
\begin{equation}\label{eq: 3rd-computation}
	\begin{aligned}
		(B_{\alpha}^{-1}+\mathcal{D}_j)^{-1}z =&\left(\frac{1}{\sigma_{0} -\alpha} I +\mathcal{D}_j -\frac{1}{\sigma_{0} -\alpha} U (U^\top U-J)^{-1}U^\top \right)^{-1}z\\
		=&C_{\alpha,j}z-  C_{\alpha,j} U\left[U^\top C_{\alpha,j} U - (\sigma_{0} -\alpha)(U^\top U -J)\right]^{-1}U^\top C_{\alpha,j} z,
	\end{aligned}
\end{equation}
where we define $C_{\alpha,j}=\left(\frac{1}{\sigma_{0} -\alpha} I +\mathcal{D}_j \right)^{-1}\in \mathbb{R}^{d\times d}$.
Recall that from Assumption \ref{ass: assumption2} we have  $ \mathcal{D}_j=\frac{1}{\alpha}\text{diag}(a_1,a_2,\dots, a_d)$, where $a_i \in \{0,1\}$ for any $i\in[d]$.
Therefore, the top-right block of $U^\top  C_{\alpha,j} U$ reads as
\begin{equation}\label{eq: K-bar-topright}
	 \sigma_{0}  S^\top C_{\alpha,j}Y =\sum_{i=1}^d \frac{\sigma_0}{1/(\sigma_0-\alpha)+a_i/\alpha} \bar{s}_i \otimes \bar{y}_i.
\end{equation}
Furthermore, denoting $I_j=\{1\le i\le d|a_i=1\}$, then it follows from \eqref{eq: K-bar-topright} that
\begin{equation*}
	\sigma_{0}  S^\top C_{\alpha,j}Y = {\alpha}{(\sigma_0 - \alpha)}  \sum_{i\in I_j} \bar{s}_i \otimes \bar{y}_i+ {\sigma_0}{(\sigma_0 - \alpha)}  \left(S^\top Y- \sum_{i\in I_j} \bar{s}_i \otimes \bar{y}_i \right).
\end{equation*}
Similar computation can be established for other blocks in $U^\top  C_{\alpha,j} U$, which implies that the explicit expression of $U^\top  C_{\alpha,j} U$ can be computed efficiently via assembling the auxiliary matrices, which only require approximately $3m^2 |I_j|$ additions.
Given $J$, $U^\top U$ and $U^\top  C_{\alpha,j} U$, it costs $O(m^3)$ operations to compute $\left[U^\top  C_{\alpha,j} U - (\sigma_{0} -\alpha)(U^\top U -J)\right]^{-1}$.
Since $C_{\alpha,k}$ is diagonal and $m\ll d$, we obtain that the remaining computation of $(B_{\alpha}^{-1}+\mathcal{D}_j)^{-1}z$ by \eqref{eq: 3rd-computation} requires about $4 m d$ multiplications and additions.

 \subsection{Computational complexity analysis}
 Based on the implementation described above, we can significantly reduce the computational complexity of Algorithm \ref{algo: ssn}. Overall, the computational cost of our implementation includes:
 	\begin{itemize}
		\item updating supplementary auxiliary matrices $\bar{s}_i \otimes \bar{y}_i,~\bar{s}_i \otimes \bar{s}_i,~\text{and}~\bar{y}_i \otimes \bar{y}_i,~\text{for}~i=1, 2, \dots, d$ ;
		\item computing the gradient $\nabla \Lambda(\lambda_j)$;
		\item computing the direction $d_j=-( B_\alpha^{-1}+\mathcal{D}_j)^{-1}\nabla \Lambda(\lambda_j)$;
		\item performing step size searches  with (\ref{SSN_LS}).		
	\end{itemize}

Assuming an implementation of Algorithm \ref{algo: ssn} involves $\iota $ SSN iterations (typically a single-digit number), we now estimate the computational cost of the aforementioned tasks.

Firstly, updating the auxiliary matrices incurs relatively minor computational cost. These auxiliary matrices are updated only after receiving new correction pair, occurring every $r$ iterations, with each update costing approximately $4md$ multiplications. Thus, on average, the computational cost for updating these auxiliary matrices in each implementation of Algorithm \ref{algo: ssn} is $4md/r$ multiplications.

Secondly, recall that the gradient $\nabla \Lambda(\lambda_j)=x_j-z_j$. As previously mentioned,  computing $x_j = B_\alpha^{-1}(\lambda_j-g)$ requires roughly $3m^2d$ additions to calculate $U^\top U-J$, $O(m^3)$ operations to compute $(U^\top U-J)^{-1}$, and $4md$ multiplications and additions for matrix-vector multiplication.  Since we assume $m \le l \ll d$, the computational cost of $O(m^3)$ operations compared to $3m^2d$ additions can be neglected. Furthermore,  the matrix $(U^\top U-J)^{-1}$ is only recomputed during correction pair updates and the matrix-vector multiplication is executed per iteration of SSN. Therefore, on average, computing $x_j = B_\alpha^{-1}(\lambda_j-g)$ for each implementation of Algorithm \ref{algo: ssn} cost approximately  $4\iota md$ multiplications and $4\iota md + 3m^2d/r$ additions. On the other hand, computing $z_j $ involves $\iota  d$ non-linear operations in each implementation of Algorithm \ref{algo: ssn}, with the specific complexity depending on the form of the function $\theta$.  Under Assumption \ref{ass: assumption2}, these operations are easy and cheap to implement. That is to say, on average, the computational complexity for computing the gradient $\nabla \Lambda(\lambda_j)$ in each implementation of Algorithm \ref{algo: ssn} includes approximately $4\iota md$ multiplications and $4\iota md + 3m^2d/r$ additions, along with $\iota d$ simple non-linear operations.

Thirdly, the computation of the direction $d_j$ is inherently the most costly. It involves forming the diagonal matrix $D_j$ and solving the linear system with the coefficient matrix $B_\alpha^{-1} + \mathcal{D}_j$. The estimation of its computational cost is similar to that of $z_j $ and $x_j $, with the difference being that $U^\top C_{\alpha,j} U$ needs to be reassembled at each SSN iteration. Then, it is easy to show that computing $d_j$ requires approximately $4\iota md$ multiplications and approximately $4\iota md + 3\iota  m^2 d$ additions, along with $\iota d$ simple non-linear operations.

Finally,  we need to perform step size searches  with (\ref{SSN_LS}). Due to the potentially varying number of step size searches in each SSN iteration, it's difficult to quantify the computational complexity. However, we note that it is relatively low. In fact, the terms $d_j^\top B_\alpha^{-1}(\lambda_j-g)=d_j^\top x_j$ and   $d_j^\top B_{\alpha}^{-1} d_j$ in \eqref{SSN_LS} can be computed once in each SSN iteration and then reused during the step size searches. Therefore, most of the computational cost of the step size search comes from executing the proximal operator and computing its generalized Jacobian, which consist of $O(\iota d)$ simple non-linear operations, which incurs small computational overhead under Assumption \ref{ass: assumption2}.

In conclusion, each implementation of Algorithm \ref{algo: ssn} requires $O(\iota md)$  multiplications and $O(\iota m^2d)$ additions, along with $O(\iota d)$ simple non-linear operations. We summarize our discussions on computational complexity in Table \ref{table: complexity}.

\begin{table}
	\small
	\centering
	\begin{tabular}{|c|c|c|c|c|}
		\hline
		& Update auxiliary matrices & Compute $\nabla \Lambda(\lambda_j)$ & Compute $d_j$ & Total \\
		\hline
		Multiplications & $4md/r$ & $4\iota md$ & $4\iota md$ & $O(\iota md)$ \\
		\hline
		Additions & - & $4\iota md + 3m^2d/r$ & $4\iota md + 3\iota  m^2 d$ & $O(\iota  m^2 d)$ \\
		\hline
		Non-linear operations & - & $\iota d$ & $\iota d$ & $O(\iota d)$ \\
		\hline
	\end{tabular}
	\caption{Summary of computational complexity of Algorithm \ref{algo: ssn}.}
	\label{table: complexity}
\end{table}{}

\section{Experiments}
\label{sec: evaluation}


In this section, we present a comprehensive set of experiments to validate the performance, accuracy, and advantages offered by Algorithm \ref{algo: spbfgs} and Algorithm \ref{algo: ssn}. Following the experimental settings of some stochastic Newton-type methods (e.g., \cite{byrd2016stochastic}), we focus on the logistic regression for binary classification, incorporating the elastic net regularizer \cite{zou2005regularization}, which reads
\begin{equation}\label{eq: logistic_prob}
    \min_{x\in\r^d}F(x)=\frac{1}{n}\sum_{i=1}^n f_i(x) + h(x),
\end{equation}
where
\begin{equation}\label{eq: logistic_ori}
	\begin{aligned}
		f_i(x) &:= f\left(x ; a_i, b_i\right)=b_i \log \left(c\left(x ; a_i\right)\right)+\left(1-b_i\right) \log \left(1-c\left(x ; a_i\right)\right),\\
		h(x) &:= \frac{\mu}{2} \|x\|^2 + \lambda \|x\|_1.
	\end{aligned}
\end{equation}
In \eqref{eq: logistic_ori}, $(a_i, b_i)\in \r^d\times \{0, 1\}$ represents the $i$-th sample, where $a_i\in\r^d$ corresponds to the feature vector and $b_i\in\{0,1\}$ represents the label.
The function $c(\cdot; \cdot): \r^d\times \r^d\rightarrow \r$ is defined as:
\begin{equation*}
	c\left(x ; a_i\right)=\frac{1}{1+\exp \left(-a_i^\top x\right)}.
\end{equation*}
It is worth noting that the square term $\frac{\mu}{2} \|x\|^2$ in the function $h$ is smooth and can be combined with $f_i$, which leads to the following equivalent formulation:
\begin{equation}\label{eq: logistic}
	\begin{aligned}
		f_i(x) &:= f\left(x ; a_i, b_i\right)=b_i \log \left(c\left(x ; a_i\right)\right)+\left(1-b_i\right) \log \left(1-c\left(x ; a_i\right)\right) + \frac{\mu}{2} \|x\|^2,\\
		h(x) &:= \lambda \|x\|_1.
	\end{aligned}
\end{equation}
An important advantage of the formulation \eqref{eq: logistic} is that each $f_i$ becomes $\mu$-strongly convex and $(\|a_i\|^2 + \mu)$-smooth.

In our experimental evaluation, we compare the performance of Algorithm \ref{algo: spbfgs} with other three algorithms for solving the regularized logistic regression problem \eqref{eq: logistic_prob}, namely the stochastic proximal quasi-Newton method combined with the SVRG (SPQN-SVRG) (e.g., \cite{luo2016proximal}), the stochastic proximal quasi-Newton (SPQN) built upon \cite{byrd2016stochastic}, and Algorithm \ref{algo: plsvrg} (P-LSVRG).
For the step size selection, we follow \cite{byrd2016stochastic} to gradually decrease the step sizes of SPQN, and use a constant step size $\eta>0$ for the other three algorithms.
We summarize the test algorithms and their step size choices in Table \ref{table: tested_algo}.

In all the experiments, we set the regularization parameter $\mu = \lambda = 10^{-3}$.
The stochastic gradient of each algorithm is computed using a small batch of samples.
In addition, since the subproblem \eqref{eq:subproblem} has no closed-form solution, we solve it iteratively by the ISTA \cite{daubechies2004iterative}, the FISTA \cite{beck2009fast}, or Algorithm \ref{algo: ssn}.
\begin{table}
	\centering
	\begin{tabular}{|c|c|c|c|c|}
		\hline
		Algorithm & Algorithm \ref{algo: spbfgs} & SPQN-SVRG & SPQN & P-LSVRG \\
		\hline
		Step size choice & constant & constant & $\eta_k = O(\frac{1}{k})$ & constant \\
		\hline
	\end{tabular}
	\caption{Summary of algorithms and their step size choices.}
	\label{table: tested_algo}
\end{table}{}

\begin{table}
	\centering
	\begin{tabular}{|c|c|c|c|}
		\hline
		Dataset & $n$ & $d$ & \textit{Sparsity} \\
		\hline $Synthetic1$ & $10^4$&5000&dense \\
		\hline
		$Synthetic2$ & $10^4$&$10^6$&sparsity=$0.1\%$ \\
		\hline
		$Synthetic3$ & $10^4$&$10^6$&sparsity=$1\%$ \\
		\hline $rcv1$ & 23149 & 47236 &sparse \\
		\hline $a9a$ & 32561 & 123 & sparse \\
		\hline $w8a$ & 49749 & 300 & sparse \\
		\hline $mushrooms$ & 8124 & 112 & sparse \\
		\hline
	\end{tabular}
	\caption{Summary of datasets.}
	\label{table: real_dataset}
\end{table}{}

Our experiments for solving \eqref{eq: logistic_prob} consist of the following three scenarios:
\begin{itemize}
	\item Comparison among Algorithm \ref{algo: spbfgs}, SPQN-SVRG, SPQN, and P-LSVRG, as well as inner solvers Algorithm \ref{algo: ssn}, FISTA, and ISTA on synthetic datasets with various dimensions and levels of sparsity.
	\item Comparison between Algorithm \ref{algo: spbfgs} and SPQN-SVRG with different parameter settings on real datasets.
	\item Comparison among testing errors and prediction accuracy of Algorithm \ref{algo: spbfgs}, SPQN, and P-LSVRG on real datasets.
\end{itemize}
To perform the experiments, we use both synthetic and real datasets given below (see also Table \ref{table: real_dataset}).
\begin{itemize}
	\item Three synthetic datasets consisting of 10,000 training samples, where the features are generated from a standard multivariate Gaussian distribution. These datasets include a dense one ($d=5000$), and two sparse ones ($d=10^6$) with sparsity levels of $0.1\%$ and $1\%$ respectively.
	\item Four sparse real datasets including the $rcv1$ dataset \cite{lewis2004rcv1}, as well as three datasets sourced from the LIBSVM library \cite{chang2008libsvm}: $a9a$, $w8a$, and $mushrooms$.
\end{itemize}

In all the figures, we define the \textit{training error} as $F(x) - F^*$, where $F(x)$ is determined based on the data points from the training set, and $F^*$ is obtained by running FISTA for a large number of iterations. On the other hand, the \textit{testing error} is defined as $F(x)$, excluding the regularization term, and using the data points from the testing set.
Our codes are available at 
 \url{https://github.com/wzm0213/single-loop-stochastic-proximal-L-BFGS-code.git}, written in Python with \textit{PyTorch}, and all the numerical experiments were conducted on a Windows 11 system, utilizing a laptop equipped with an Intel(R) Core(TM) i5-12500H CPU operating at 2.50 GHz and 16 GB of memory. 

\subsection{Experiments on synthetic datasets}
In this section, we perform experiments on the comparison among Algorithm \ref{algo: spbfgs}, SPQN-SVRG, SPQN, and P-LSVRG, as well as the inner solvers Algorithm \ref{algo: ssn}, FISTA, and ISTA on the synthetic datasets presented in Table \ref{table: real_dataset}.

\subsubsection{Performance of all four algorithms}\label{subsubsec: all-four}
We train the logistic regression model \eqref{eq: logistic_prob} using the four algorithms listed in Table \ref{table: tested_algo} with the three synthetic datasets presented in Table \ref{table: real_dataset}.
The subproblems involved in Algorithm \ref{algo: spbfgs}, SPQN-SVRG and SPQN are solved by FISTA.
Each subproblem takes the form of \eqref{eq: sub_sim} and its first-order optimality condition suggests that its solution is a root of
\begin{equation*}
	\mathcal{E}(x):=x-\p_{\theta}\left(x-B x-g\right).
\end{equation*}
We terminate the inner solver and return $\tilde{x}$ whenever $\mathcal{E}(\tilde{x}) < 10^{-8}$.
For all the three datasets, the batch size for computing the stochastic gradients is chosen to be $b=128$, while the sample size $b_H:=|\mathcal{S}^t|$ for implementing the Hessian-vector product \eqref{eq: hess-vec} is fixed as $600$ for each algorithm.
We also fix the Hessian update frequency $r=10$ for updating the correction pairs, and the maximum memory size $l=10$ for Algorithm \ref{algo: spbfgs}, SPQN-SVRG, and SPQN for a fair comparison.
Moreover, the probability parameter in Algorithm \ref{algo: spbfgs} is fixed as $p=\frac{b}{n}$, and the number of inner iterations in SPQN-SVRG is chosen as $l_s = \frac{1}{p} = \frac{n}{b}$.
The initial points are set as $0.01\cdot \mathbf{1}$ for all the algorithms, with $\mathbf{1}\in \mathbb{R}^d$ being the all-ones vector, while the step sizes are carefully tuned for each algorithm on each dataset.

\begin{figure}
	\centering
	\includegraphics[width=1.0\textwidth]{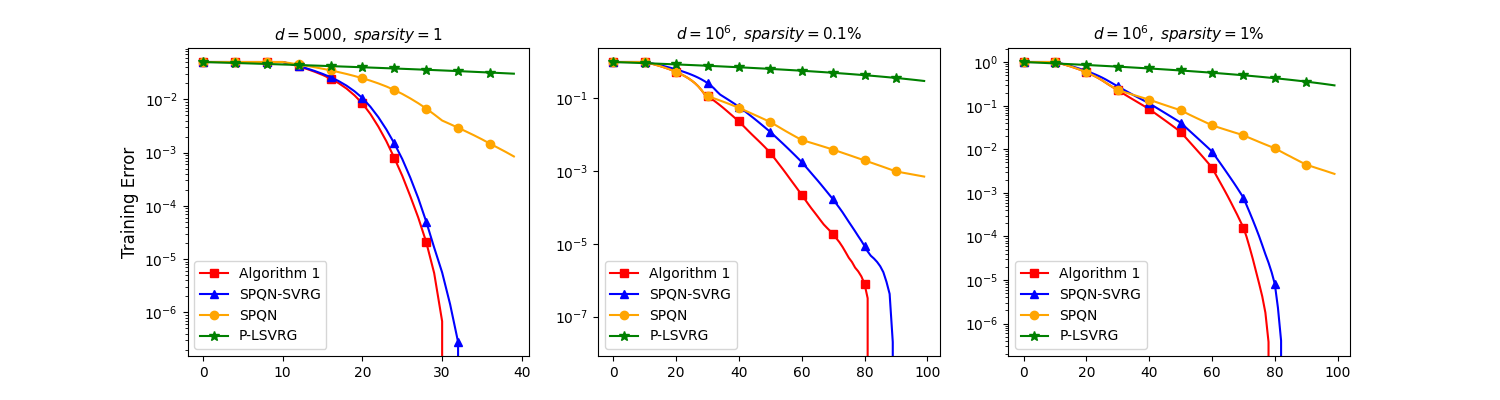}
	\caption{Comparisons for training logistic regression model with three synthetic datasets.}
	\label{fig: synthetic}
\end{figure}

Figure \ref{fig: synthetic} displays the training errors on the three synthetic datasets.
We can observe that Algorithm \ref{algo: spbfgs} and SPQN-SVRG significantly outperform SPQN and P-LSVRG across all three datasets, indicating that integrating VR techniques and Hessian information can expedite algorithmic convergence.

\subsubsection{Comparison among inner solvers}
\label{sec: exp_inner_solver}
In this subsection, we compare the efficiency of Algorithm \ref{algo: ssn} with ISTA and FISTA.
For this purpose, we apply Algorithm \ref{algo: spbfgs} with the above-mentioned three inner solvers to solve \eqref{eq: logistic_prob} on the three synthetic datasets.
We terminate Algorithm \ref{algo: spbfgs} when the relative error $\left[F(x) - F^*\right] /F^* < 10^{-6}$.
The stopping criterion for inner iterations is set as $\mathcal{E}(x) < 10^{-8}$, the initial point is set as $0.01\cdot \mathbf{1}$ for all inner solvers, while the other settings of Algorithm \ref{algo: spbfgs} are kept identical for all the cases to ensure fair comparisons. The performance of the three inner solvers is presented in Table \ref{table: inner_solver}, and we can conclude that Algorithm \ref{algo: ssn} demonstrates superior performance compared to FISTA and ISTA across various problem sizes, as evidenced by significantly better results in all aspects.
Consequently, Algorithm \ref{algo: ssn} has the potential to be employed as a subroutine in diverse proximal Newton-type methods, enabling them to handle high-dimensional problems.

\begin{table}
	\centering
	\begin{tabular}{|c|c|c|c|}
		\hline {\small \textbf{Dataset}} & {\small \textbf{Ave Time (s)}} & {\small \textbf{Ave Iter}} & {\small \textbf{Max Iter}}\\
		\hline \multicolumn{4}{|c|}{Algorithm \ref{algo: ssn}}\\
		\hline $Synthetic1$ & 0.039 & 7.61 & 19 \\
		\hline $Synthetic2$ & 0.125 & 8.26 & 23 \\
		\hline $Synthetic3$ & 0.122 & 8.07 & 23 \\
		\hline \multicolumn{4}{|c|}{FISTA}\\
		\hline $Synthetic1$ & 0.315 & 113.46 & 304 \\
		\hline $Synthetic2$  & 1.877 & 132.51 & 381 \\
		\hline $Synthetic3$ & 1.901 & 137.82 & 452 \\
		\hline \multicolumn{4}{|c|}{ISTA}\\
		\hline $Synthetic1$ & 0.481 & 187.95 & 491 \\
		\hline $Synthetic2$  & 2.753 & 201.68 & 606 \\
		\hline $Synthetic3$ & 2.686 & 197.67 & 622 \\
		\hline
	\end{tabular}
	\caption{Comparison among three inner solvers when implementing Algorithm \ref{algo: spbfgs} on three synthetic datasets. Ave Time: average time for solving the subproblems; Ave Iter: average number of inner iterations; Max Iter: maximum number of inner iterations.}
	\label{table: inner_solver}
\end{table}

\subsection{Comparison between Algorithm \ref{algo: spbfgs} and SPQN-SVRG}
The L-SVRG \cite{kovalev2020don} presents experimental evidence supporting the claim that L-SVRG exhibits faster convergence compared to the SVRG across various tasks.
In our study, we extend this comparison by evaluating the performance of Algorithm \ref{algo: spbfgs} in comparison to SPQN-SVRG. To this end, we utilize four real datasets: $rcv1$, $a9a$, $w8a$, and $mushrooms$, see Table \ref{table: real_dataset}.

We implement Algorithm \ref{algo: spbfgs} and SPQN-SVRG with different parameter choices for $p$ and $l_s$.
Recall that in this context, $p$ represents the probability of updating the reference point in Algorithm \ref{algo: spbfgs} (see \eqref{eq:update_w}), while $l_s$ denotes the number of inner iterations in SPQN-SVRG.
For fair comparisons, we fix the step size $\eta = 10^{-2}$ and utilize Algorithm \ref{algo: ssn} as the inner solver for both algorithms across all datasets, given its high efficiency demonstrated in Section \ref{sec: exp_inner_solver}.
Additionally, we maintain consistent batch sizes of $b = 128$ and $b_H =600$ across all cases.
As it is expensive to estimate the bounds $m_1$ and $m_2$ for the Hessian approximations $B_k$ as in Lemma \ref{lemma: bounded hessian}, we follow Corollary \ref{coro: 1} to select three distinct values of $p$: $\frac{b}{n}$, $\frac{\mu b}{L}$, and $\frac{1}{2}(\frac{b}{n} + \frac{\mu b}{L})$.
We set $l_s = \frac{1}{p}$ to ensure that the expected number of iterations for updating the reference point remain the same for both algorithms.
Furthermore, all the other parameters are the same as those specified in Section \ref{subsubsec: all-four} for both Algorithm \ref{algo: spbfgs} and SPQN-SVRG.

\begin{figure}
	\centering
	\begin{subfigure}[!h]{0.47\linewidth}
		\centering
		\includegraphics[width=\linewidth]{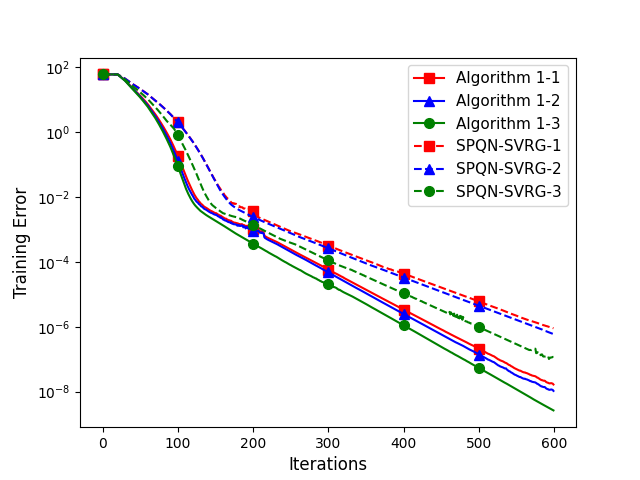}
		\caption{$rcv1$ dataset}
		\label{fig: rcv_comp}
	\end{subfigure}
	\hfill
	\begin{subfigure}[!h]{0.47\linewidth}
		\centering
		\includegraphics[width=\linewidth]{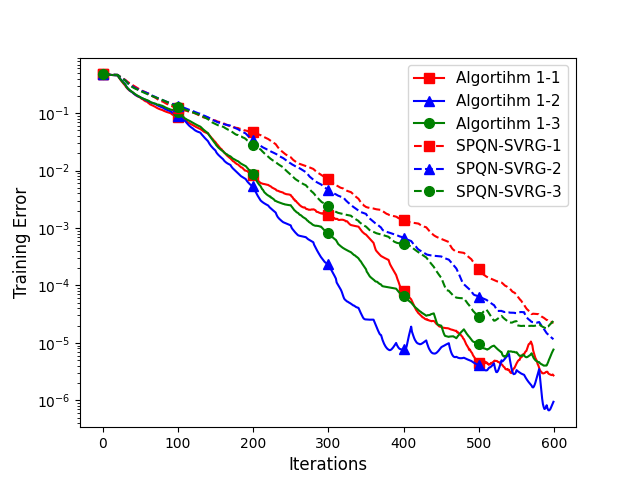}
		\caption{$a9a$ dataset}
		\label{fig: a9a_comp}
	\end{subfigure}
	\hfill
	\begin{subfigure}[!h]{0.47\linewidth}
		\centering
		\includegraphics[width=\linewidth]{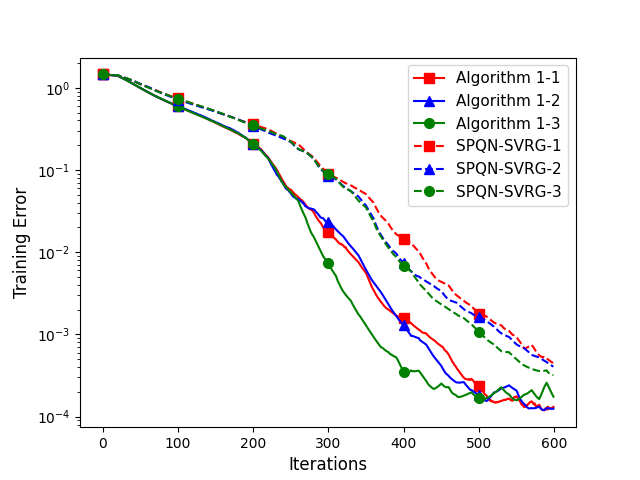}
		\caption{$w8a$ dataset}
		\label{fig: w8a_comp}
	\end{subfigure}
	\hfill
	\begin{subfigure}[!h]{0.47\linewidth}
		\centering
		\includegraphics[width=\linewidth]{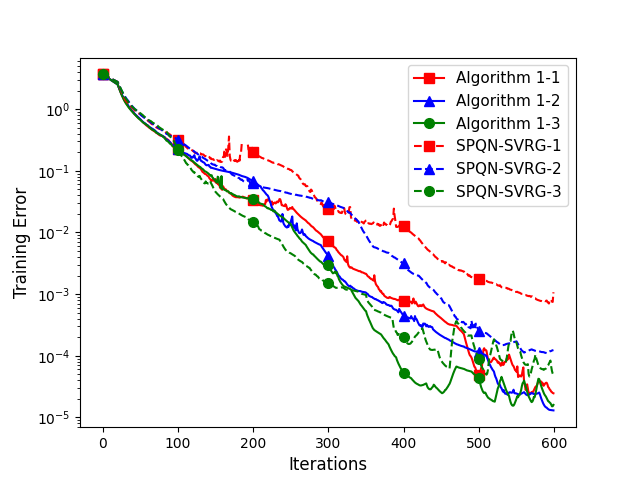}
		\caption{$mushrooms$ dataset}
		\label{fig: mushroom_comp}
	\end{subfigure}
	\caption{Comparison between Algorithm \ref{algo: spbfgs} and SPQN-SVRG over four real datasets using different choices of parameters $p$ and $l_s=1/p$, where the indexes \textbf{-1, -2, -3} correspond to the cases where $p=\frac{b}{n}$, $\frac{1}{2}(\frac{b}{n} + \frac{\mu b}{L})$, and $\frac{\mu b}{L}$ respectively.}
	\label{fig: comp_LSVRG_SVRG}
\end{figure}

Figure \ref{fig: comp_LSVRG_SVRG} reports the resulting training errors on the four real datasets.
The figures effectively demonstrate that the training errors obtained by Algorithm \ref{algo: spbfgs} exhibit faster decreases compared to those obtained by SPQN-SVRG across various parameter choices, and therefore highlights the superior performance of Algorithm \ref{algo: spbfgs} for these tasks.

\subsection{Testing error and prediction accuracy}
In our previous experiments, we observed a rapid decrease in the training errors when utilizing Algorithm \ref{algo: spbfgs}. Moreover, it consistently outperformed SPQN-SVRG across various parameter choices.
Nevertheless, it is essential to evaluate the testing error of the algorithm to assess its performance in avoiding overfitting.
For this purpose, we use two real datasets $a9a$ and $w8a$, which have been split into the training set and testing set beforehand.
For each dataset, we train the logistic regression model \eqref{eq: main prob} using three different algorithms: Algorithm \ref{algo: spbfgs}, SPQN, and P-LSVRG.
The purpose of comparing these three methods is to highlight the advantages gained through the integration of Hessian information and VR techniques in the design of stochastic algorithms.

In the experiments, the parameters $\mu$, $\lambda$, $b$, and $b_H$ are kept consistent with the values specified in Section \ref{subsubsec: all-four}. For both datasets, we set $\eta=10^{-2}$ for Algorithm \ref{algo: spbfgs} and P-LSVRG, while the step sizes of SPQN are tuned to optimize its performance.
The subproblems involved in Algorithm \ref{algo: spbfgs} and SPQN are solved by Algorithm \ref{algo: ssn}.
We record the iterates $\{x_k\}$ along the training process to calculate the corresponding testing errors on the testing dataset for each algorithm.

\begin{figure}
	\centering
	\begin{subfigure}[!h]{0.48\linewidth}
		\centering
		\includegraphics[width=\linewidth]{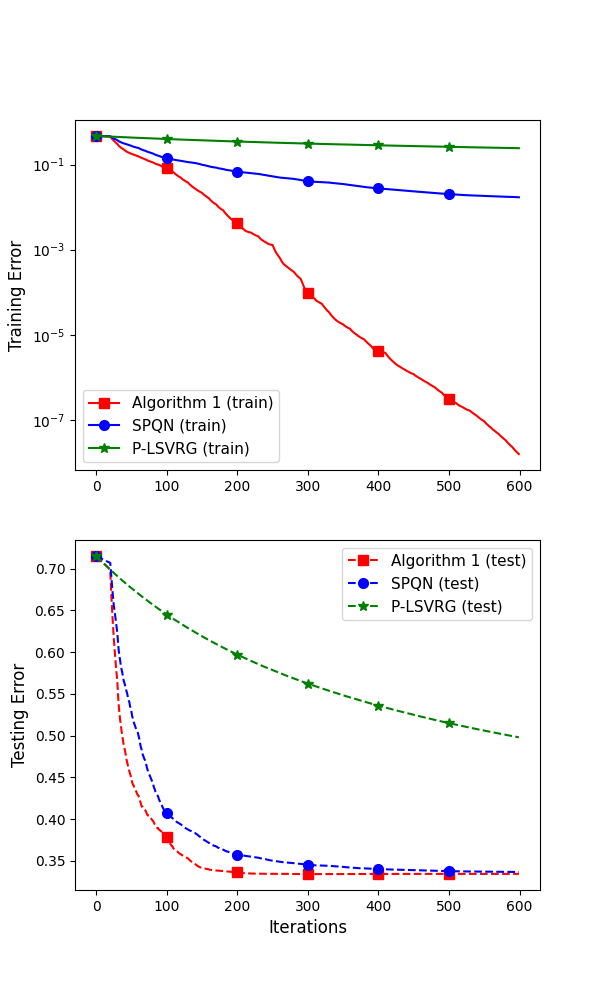}
		\caption{$a9a$ dataset}
		\label{fig: a9a_general}
	\end{subfigure}
	\hfill
	\begin{subfigure}[!h]{0.48\linewidth}
		\centering
		\includegraphics[width=\linewidth]{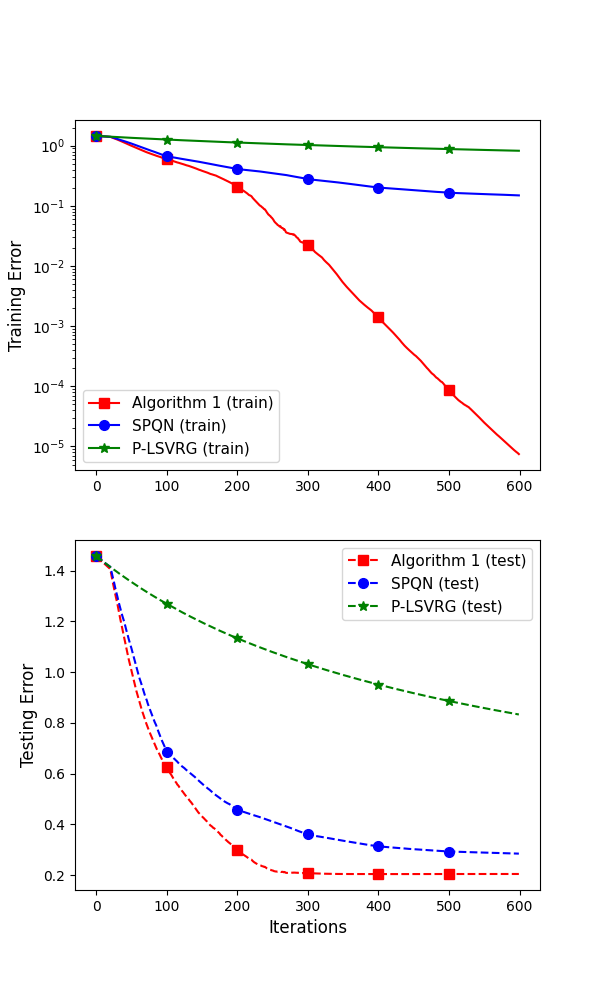}
		\caption{$w8a$ dataset}
		\label{fig: w8a_general}
	\end{subfigure}
	\caption{Training and testing error for the logistic regression model on $a9a$ and $w8a$ datasets trained with three algorithms: Algorithm \ref{algo: spbfgs}, SPQN, and P-LSVRG.}
	\label{fig: generalization_error}
\end{figure}

Figure \ref{fig: generalization_error} displays the training and testing errors for each algorithm. Based on the figures, we observe that Algorithm \ref{algo: spbfgs} consistently outperforms the other two algorithms in terms of both training and testing errors.
This observation supports our expectation that the integration of Hessian information and VR techniques in Algorithm \ref{algo: spbfgs} leads to significant improvements in training efficiency while effectively avoiding overfitting.

\begin{figure}
	\centering
	\begin{subfigure}[h]{0.47\linewidth}
		\centering
		\includegraphics[width=\linewidth]{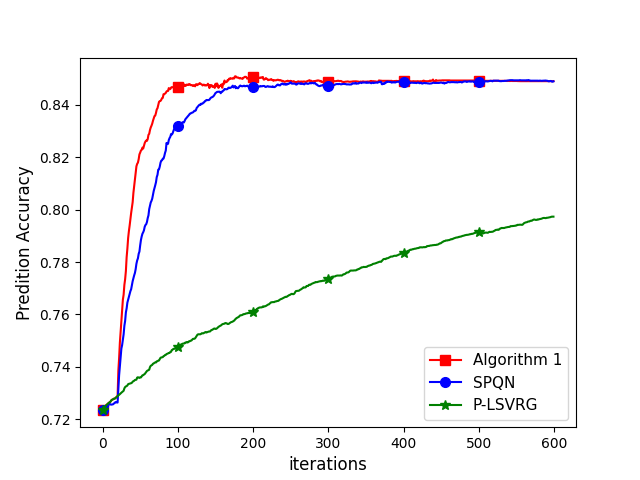}
		\caption{$a9a$ dataset}
		\label{fig: a9a_accuracy}
	\end{subfigure}
	\hfill
	\begin{subfigure}[h]{0.47\linewidth}
		\centering
		\includegraphics[width=\linewidth]{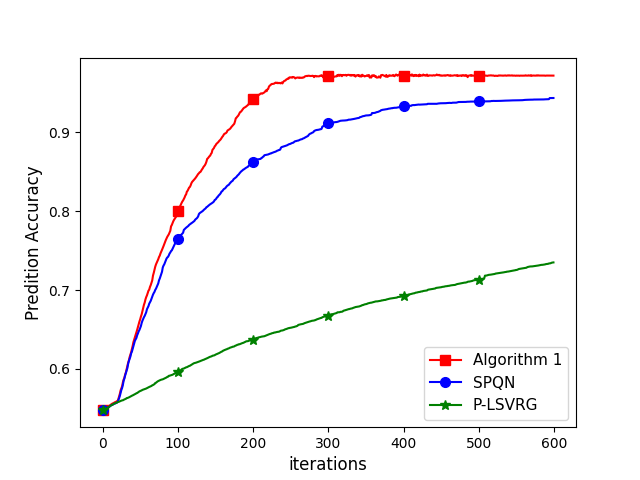}
		\caption{$w8a$ dataset}
		\label{fig: w8a_accuracy}
	\end{subfigure}
	\caption{Prediction accuracy for the logistic regression model on $a9a$ and $w8a$ datasets trained with three algorithms: Algorithm \ref{algo: spbfgs}, SPQN, and P-LSVRG.}
	\label{fig: test_accuracy}
\end{figure}

We also present the prediction accuracy in Figure \ref{fig: test_accuracy} to provide additional insights into the performance of these algorithms.
From Figure \ref{fig: test_accuracy}, it can be observed that while both Algorithm \ref{algo: spbfgs} and SPQN achieve high prediction accuracy within a small number of iterations, Algorithm \ref{algo: spbfgs} demonstrates relatively better performance in terms of prediction accuracy compared to SPQN.
In contrast, the prediction accuracy of the P-LSVRG increases relatively slowly.
This observation further supports the superior performance of Algorithm \ref{algo: spbfgs} in terms of efficient convergence and achieving high prediction accuracy on the logistic binary classification task.

\section{Conclusions}
\label{sec: conclusion}
In this paper, we present a novel stochastic proximal quasi-Newton method for efficiently solving a class of nonsmooth convex optimization problems commonly encountered in machine learning, which involve structured objective functions consist of both smooth and nonsmooth components. The proposed method offers several advantages, which we summarize as follows:
\begin{itemize}
	\item Structurally, our method combines the loopless SVRG technique with a stochastic L-BFGS scheme. By incorporating Hessian information and utilizing stochastic gradients with reduced variances, our method produces high-quality search directions in large-scale scenarios. Furthermore, compared to the commonly used SVRG scheme, the generation of stochastic gradients via the loopless SVRG only exhibits a single loop, simplifying the implementation of our method.
	\item Theoretically, we establish a worst-case globally linear convergence rate for the proposed method under mild assumptions. We also discuss and demonstrate linear convergence for special cases and generalizations of our method, which allows for incorporating other variance reduction techniques.
	\item Numerically, we propose a fast Semismooth Newton (SSN) method along with a line search scheme to efficiently solve the subproblems. By leveraging a compact representation of the L-BFGS matrix and storing a few auxiliary matrices, we significantly reduce the computational burdens for implementing our method.
\end{itemize}

Both synthetic and real datasets are tested on a regularized logistic regression problem to validate the computational efficiency of our method along with the SSN solver. The results suggest that the proposed method outperforms several state-of-the-art algorithms, and the SSN-based numerical implementation significantly reduces the overall computational cost.

{\small

\begin{thebibliography}{10}

	\bibitem{beck2009fast}
	{\sc A.~Beck and M.~Teboulle}, {\em A fast iterative shrinkage-thresholding algorithm for linear inverse problems}, SIAM Journal on Imaging Sciences, 2 (2009), pp.~183--202.
	
	\bibitem{becker2012quasi}
	{\sc S.~Becker and J.~Fadili}, {\em A quasi-Newton proximal splitting method}, Advances in Neural Information Processing Systems, 25 (2012).
	
	\bibitem{becker2019quasi}
	{\sc S.~Becker, J.~Fadili, and P.~Ochs}, {\em On quasi-Newton forward-backward splitting: Proximal calculus and convergence}, SIAM Journal on Optimization, 29 (2019), pp.~2445--2481.
	
	\bibitem{berahas2016multi}
	{\sc A.~S. Berahas, J.~Nocedal, and M.~Tak{\'a}c}, {\em A multi-batch L-BFGS method for machine learning}, Advances in Neural Information Processing Systems, 29 (2016).
	
	\bibitem{berkson1944application}
	{\sc J.~Berkson}, {\em Application of the logistic function to bio-assay}, Journal of the American Statistical Association, 39 (1944), pp.~357--365.
	
	\bibitem{bertsekas2011incremental}
	{\sc D.~P. Bertsekas}, {\em Incremental proximal methods for large scale convex optimization}, Mathematical Programming, 129 (2011), pp.~163--195.
	
	\bibitem{bottou2007tradeoffs}
	{\sc L.~Bottou and O.~Bousquet}, {\em The tradeoffs of large scale learning}, Advances in Neural Information Processing Systems, 20 (2007).
	
	\bibitem{bottou2018optimization}
	{\sc L.~Bottou, F.~E. Curtis, and J.~Nocedal}, {\em Optimization methods for large-scale machine learning}, SIAM Review, 60 (2018), pp.~223--311.
	
	\bibitem{bottou1991stochastic}
	{\sc L.~Bottou}, {\em Stochastic gradient learning in neural networks},
	Proceedings of Neuro-N{$\hat{\i}$}mes, 1991, EC2.
	
	\bibitem{byrd2016family}
	{\sc R.~H. Byrd, G.~M. Chin, J.~Nocedal, and F.~Oztoprak}, {\em A family of second-order methods for convex $l_1$-regularized optimization}, Mathematical Programming, 159 (2016), pp.~435--467.
	
	\bibitem{byrd2016stochastic}
	{\sc R.~H. Byrd, S.~L. Hansen, J.~Nocedal, and Y.~Singer}, {\em A stochastic quasi-Newton method for large-scale optimization}, SIAM Journal on	Optimization, 26 (2016), pp.~1008--1031.
	
	\bibitem{byrd2016inexact}
	{\sc R.~H. Byrd, J.~Nocedal, and F.~Oztoprak}, {\em An inexact successive quadratic approximation method for $l_1$-regularized optimization}, Mathematical Programming, 157 (2016), pp.~375--396.
	
	
	\bibitem{chang2008libsvm}
	{\sc R.-E.~Fan, and C.-J.~Lin}, {\em LIBSVM data: Classification, regression, and
		multi-label}, http://www. csie. ntu. edu. tw/\~{}
	cjlin/libsvmtools/datasets/,  (2011).
	
	\bibitem{clarke1990optimization}
	{\sc F.~H. Clarke}, {\em Optimization and Nonsmooth Analysis}, SIAM, 1990.
	
	
	\bibitem{cortes1995support}
	{\sc C.~Cortes and V.~Vapnik}, {\em Support-vector networks}, Machine Learning,	20 (1995), pp.~273--297.
	
	\bibitem{daubechies2004iterative}
	{\sc I.~Daubechies, M.~Defrise, and C.~De~Mol}, {\em An iterative thresholding algorithm for linear inverse problems with a sparsity constraint}, Communications on Pure and Applied Mathematics: A Journal Issued by the Courant Institute of Mathematical Sciences, 57 (2004), pp.~1413--1457.
	
	\bibitem{defazio2014saga}
	{\sc A.~Defazio, F.~Bach, and S.~Lacoste-Julien}, {\em SAGA: A fast incremental gradient method with support for non-strongly convex composite objectives}, Advances in Neural Information Processing Systems, 27 (2014).
	
	\bibitem{duchi2011adaptive}
	{\sc J.~Duchi, E.~Hazan, and Y.~Singer}, {\em Adaptive subgradient methods for online learning and stochastic optimization}, Journal of Machine Learning Research, 12 (2011), pp.~2121--2159.
	
	\bibitem{duchi2009efficient}
	{\sc J.~Duchi and Y.~Singer}, {\em Efficient online and batch learning using forward backward splitting}, Journal of Machine Learning Research, 10 (2009), pp.~2899--2934.
	
	\bibitem{fletcher2000practical}
	{\sc R.~Fletcher}, {\em Practical Methods of Optimization}, John Wiley \& Sons,
	2000.
	

	\bibitem{golub2013matrix}
	{\sc G.-H.~Golub and C.-F.~Van Loan}, {\em
	Matrix Computations.}  JHU press, 2013.
		
	
	\bibitem{gower2020variance}
	{\sc R.~M. Gower, M.~Schmidt, F.~Bach, and P.~Richt{\'a}rik}, {\em
		Variance-reduced methods for machine learning}, Proceedings of the IEEE, 108 (2020), pp.~1968--1983.
	
	\bibitem{guo2023overview}
	{\sc T.-D. Guo, Y.~Liu, and C.-Y. Han}, {\em An overview of stochastic
		quasi-Newton methods for large-scale machine learning}, Journal of the Operations Research Society of China, 11 (2023), pp.~245--275.
	
	\bibitem{hanzely2018sega}
	{\sc F.~Hanzely, K.~Mishchenko, and P.~Richt{\'a}rik}, {\em SEGA: Variance reduction via gradient sketching}, Advances in Neural Information Processing Systems, 31 (2018).
	
	\bibitem{hastie2009elements}
	{\sc T.~Hastie, R.~Tibshirani, J.~H. Friedman, and J.~H. Friedman}, {\em The Elements of Statistical Learning: Data Mining, Inference, and Prediction}, vol.~2, Springer, 2009.
	
	
	
	\bibitem{johnson2013accelerating}
	{\sc R.~Johnson and T.~Zhang}, {\em Accelerating stochastic gradient descent using predictive variance reduction}, Advances in Neural Information Processing Systems, 26 (2013).
	
	\bibitem{kasai2018riemannian}
	{\sc H.~Kasai, H.~Sato, and B.~Mishra}, {\em Riemannian stochastic quasi-Newton	algorithm with variance reduction and its convergence analysis}, International Conference on Artificial Intelligence and Statistics, (84) 2018, pp.~269--278.
	
	\bibitem{kingma2014adam}
	{\sc D.~P. Kingma and J.~Ba}, {\em Adam: A method for stochastic optimization}, arXiv preprint arXiv:1412.6980, (2014).
	
	\bibitem{kovalev2020don}
	{\sc D.~Kovalev, S.~Horv{\'a}th, and P.~Richt{\'a}rik}, {\em Don’t jump
		through hoops and remove those loops: SVRG and Katyusha are better without the outer loop}, Algorithmic Learning Theory, 2020.
	
	
	\bibitem{langford2009sparse}
	{\sc J.~Langford, L.~Li, and T.~Zhang}, {\em Sparse online learning via
		truncated gradient}, Journal of Machine Learning Research, 10 (2009), pp.~777--801.
	
	\bibitem{lecun1998gradient}
	{\sc Y.~LeCun, L.~Bottou, Y.~Bengio, and P.~Haffner}, {\em Gradient-based learning applied to document recognition}, Proceedings of the IEEE, 86 (1998), pp.~2278--2324.
	
	\bibitem{lee2014proximal}
	{\sc J.~D. Lee, Y.~Sun, and M.~A. Saunders}, {\em Proximal Newton-type methods for minimizing composite functions}, SIAM Journal on Optimization, 24 (2014), pp.~1420--1443.

	\bibitem{levitin1966constrained}
	{\sc E.~S. Levitin and B.~T. Polyak}, {\em Constrained minimization methods}, USSR Computational mathematics and mathematical physics, 6 (1966), pp.~1--50.
	
	\bibitem{lewis2004rcv1}
	{\sc D.~D. Lewis, Y.~Yang, T.~Russell-Rose, and F.~Li}, {\em RCV1: A new benchmark collection for text categorization research}, Journal of Machine	Learning Research, 5 (2004), pp.~361--397.
	
	\bibitem{li2020accelerated}
	{\sc H.~Li, C.~Fang, and Z.~Lin}, {\em Accelerated first-order optimization algorithms for machine learning}, Proceedings of the IEEE, 108 (2020), pp.~2067--2082.
	

	\bibitem{lions1979splitting}
	{\sc P.-L.~Lions and B.~Mercier}, {\em Splitting algorithms for the sum of two nonlinear operators}, SIAM Journal on Numerical Analysis, 16 (1979), pp.~964--979.

	\bibitem{liu2017inexact}
	{\sc X.~Liu, C.-J.~Hsieh, J.~D.~Lee, and Y.~Sun}, {\em An inexact subsampled proximal Newton-type method for large-scale machine learning}, arXiv preprint arXiv:1708.08552, (2017).

	\bibitem{liu1989limited}
	{\sc D.~Liu, and J.~Nocedal}, {\em On the limited memory BFGS method for large scale optimization}, Mathematical Programming, 45 (1989),
	pp.~503--528.
	
	\bibitem{loizou2020momentum}
	{\sc N.~Loizou and P.~Richt{\'a}rik}, {\em Momentum and stochastic momentum for stochastic gradient, Newton, proximal point and subspace descent methods}, Computational Optimization and Applications, 77 (2020), pp.~653--710.
	
	\bibitem{lucchi2015variance}
	{\sc A.~Lucchi, B.~McWilliams, and T.~Hofmann}, {\em A variance reduced
	stochastic Newton method}, arXiv preprint arXiv:1503.08316, (2015).
	
	\bibitem{luo2016proximal}
	{\sc L.~Luo, Z.~Chen, Z.~Zhang, and W.-J. Li}, {\em A proximal stochastic quasi-Newton algorithm}, arXiv preprint arXiv:1602.00223,  (2016).
	
	\bibitem{mordukhovich2021generalized}
	{\sc B.~S. Mordukhovich and M.~E. Sarabi}, {\em Generalized Newton algorithms for tilt-stable minimizers in nonsmooth optimization}, SIAM Journal on Optimization, 31 (2021), pp.~1184--1214.
	
	\bibitem{mordukhovich2023globally}
	{\sc B.~S. Mordukhovich, X.~Yuan, S.~Zeng, and J.~Zhang}, {\em A globally convergent proximal Newton-type method in nonsmooth convex optimization}, Mathematical Programming, 198 (2023), pp.~899--936.
	
	\bibitem{moritz2016linearly}
	{\sc P.~Moritz, R.~Nishihara, and M.~Jordan}, {\em A linearly-convergent stochastic L-BFGS algorithm}, Artificial Intelligence and Statistics, (51) 2016, pp.~249--258.
	
	\bibitem{nemirovski2009robust}
	{\sc A.~Nemirovski, A.~Juditsky, G.~Lan, and A.~Shapiro}, {\em Robust
	stochastic approximation approach to stochastic programming}, SIAM Journal on Optimization, 19 (2009), pp.~1574--1609.
	
	\bibitem{nesterov2003introductory}
	{\sc Y.~Nesterov}, {\em Introductory Lectures on Convex Optimization: A Basic Course}, vol.~87, Springer Science \& Business Media, 2003.
	
	
	\bibitem{nesterov2013gradient}
	{\sc Y.~Nesterov}, {\em Gradient methods for minimizing composite functions}, Mathematical Programming, 140 (2013), pp.~125--161.
	
	\bibitem{nesterov1994interior}
	{\sc Y.~Nesterov and A.~Nemirovskii}, {\em Interior-point Polynomial Algorithms	in Convex Programming}, vol.~13, SIAM, 1994.
	
	\bibitem{nocedal1999numerical}
	 {\sc J.~Nocedal and S.~J. Wright}, {\em Numerical Optimization}, Springer, 1999.

	\bibitem{passty1979ergodic}
	{\sc G.~B. Passty}, {\em Ergodic convergence to a zero of the sum of monotone operators in Hilbert space}, Journal of Mathematical Analysis and Applications, 72 (1979), pp.~383--390.

	\bibitem{qi1993nonsmooth}
	{\sc L.~Qi and J.~Sun}, {\em A nonsmooth version of Newton's method},
	Mathematical Programming, 58 (1993), pp.~353--367.
	
	\bibitem{qian1999momentum}
	{\sc N.~Qian}, {\em On the momentum term in gradient descent learning
		algorithms}, Neural Networks, 12 (1999), pp.~145--151.
	
	\bibitem{robbins1951stochastic}
	{\sc H.~Robbins and S.~Monro}, {\em A stochastic approximation method}, The Annals of Mathematical Statistics, 22 (1951), pp.~400--407.
	
	\bibitem{rockafellar1997convex}
	{\sc R.~T. Rockafellar}, {\em Convex Analysis}, vol.~11, Princeton University Press, 1997.


	
	\bibitem{roux2012stochastic}
	{\sc N.~Roux, M.~Schmidt, and F.~Bach}, {\em A stochastic gradient method with an exponential convergence rate for finite training sets}, Advances in Neural Information Processing Systems, 25 (2012).
	
	
	\bibitem{schraudolph2007stochastic}
	{\sc N.~N. Schraudolph, J.~Yu, and S.~G{\"u}nter}, {\em A stochastic
	quasi-Newton method for online convex optimization}, Artificial Intelligence and Statistics, (7) 2007, pp.~436--443.
	
	\bibitem{shalev2013stochastic}
	{\sc S.~Shalev-Shwartz and T.~Zhang}, {\em Stochastic dual coordinate ascent methods for regularized loss minimization}, Journal of Machine Learning Research, 14 (2013), pp.~567--599.
	
	
	\bibitem{stella2017forward}
	{\sc L.~Stella, A.~Themelis, and P.~Patrinos}, {\em Forward--backward
	quasi-Newton methods for nonsmooth optimization problems}, Computational Optimization and Applications, 67 (2017), pp.~443--487.
	
	\bibitem{tibshirani1996regression}
	{\sc R.~Tibshirani}, {\em Regression shrinkage and selection via the lasso}, Journal of the Royal Statistical Society Series B: Statistical Methodology, 58 (1996), pp.~267--288.

	\bibitem{wang2019utilizing}
	{\sc J.~Wang and T.~Zhang}, {\em Utilizing second order information in minibatch stochastic variance reduced proximal iterations}, Journal of Machine Learning Research, 20 (2019), pp.~1--56.
	
	\bibitem{wolfe1969convergence}
	{\sc P.~Wolfe}, {\em Convergence conditions for ascent methods}, SIAM Review, 11 (1969), pp.~226--235.
	
	
	\bibitem{xiao2014proximal}
	{\sc L.~Xiao and T.~Zhang}, {\em A proximal stochastic gradient method with progressive variance reduction}, SIAM Journal on Optimization, 24 (2014), pp.~2057--2075.
	
	\bibitem{yang2021stochastic}
	{\sc M.~Yang, A.~Milzarek, Z.~Wen, and T.~Zhang}, {\em A stochastic extra-step quasi-Newton method for nonsmooth nonconvex optimization}, Mathematical Programming, (2021), pp.~1--47.
	
	
	\bibitem{zeiler2012adadelta}
	{\sc M.~D. Zeiler}, {\em Adadelta: An adaptive learning rate method}, arXiv preprint arXiv:1212.5701, (2012).

	\bibitem{zhang2022adaptive}
	{\sc J.~Zhang, L.~Xiao, and S.~Zhang}, {\em Adaptive stochastic variance reduction for subsampled Newton method with cubic regularization}, INFORMS Journal on Optimization, 4 (2022), pp.~45--64.
	
	\bibitem{zhao2017stochastic}
	{\sc R.~Zhao, W.~B. Haskell, and V.~Y. Tan}, {\em Stochastic L-BFGS: Improved convergence rates and practical acceleration strategies}, IEEE Transactions on Signal Processing, 66 (2017), pp.~1155--1169.
	
	\bibitem{zou2005regularization}
	{\sc H.~Zou and T.~Hastie}, {\em Regularization and variable selection via the elastic net}, Journal of the Royal Statistical Society Series B: Statistical	Methodology, 67 (2005), pp.~301--320.
	
\end{thebibliography}
\bibliographystyle{ams}

}
\end{document}